    \setlist{
        labelindent	=\parindent,
        leftmargin	=*,
        font		=\normalfont
    }
    \setlist[enumerate]{
        label		=(\roman*),
        ref			=(\roman*)
    }
    \tikzset{
        x=4\baselineskip,y=4\baselineskip,
        baseline=(current bounding box.center),
        mor/.style={-stealth,font=\scriptsize,line width=1pt},
        nat/.style={font=\scriptsize,arrows={-stealth},double equal sign distance,every edge/.append style={double}},
        igual/.style={font=\scriptsize,arrows={-},double equal sign distance,every edge/.append style={double}},
    }
\newtheorem{thrm}{Theorem}[section]
\newtheorem{Theorem}[thrm]{Theorem}
\newtheorem{Corollary}[thrm]{Corollary}
\newtheorem{Lemma}[thrm]{Lemma}
\newtheorem{Proposition}[thrm]{Proposition}
\theoremstyle{definition}
    \newtheorem{Definition}[thrm]{Definition}
    \newtheorem{Remark}[thrm]{Remark}
    \newtheorem{Example}[thrm]{Example}
\crefname{thrm}{Theorem}{Theorems}
\crefname{Theorem}{Theorem}{Theorems}
\crefname{lem}{Lemma}{Lemmas}
\crefname{Lemma}{Lemma}{Lemmas}
\crefname{cor}{Corollary}{Corollaries}
\crefname{Corollary}{Corollary}{Corollaries}
\crefname{prop}{Proposition}{Propositions}
\crefname{Proposition}{Proposition}{Propositions}
\crefname{defn}{Definition}{Definitions}
\crefname{Definition}{Definition}{Definitions}
\crefname{exm}{Example}{Examples}
\crefname{Example}{Example}{Examples}
\crefname{rem}{Remark}{Remarks}
\crefname{Remark}{Remark}{Remarks}
\crefname{section}{Section}{Sections}
\crefname{equation}{\unskip}{\unskip}
\crefname{enumi}{\unskip}{\unskip}
\newcommand{\id}{\mathrm{id}}
\newcommand{\Id}{\mathrm{Id}}
\newcommand{\End}{{\sf End}}
\newcommand{\Aut}{{\sf Aut}}
\DeclareMathOperator{\Hom}{Hom}
\newcommand{\m}{{}^{-1}}
\newcommand{\vf}{\varphi}
\newcommand{\g}{\gamma}
\newcommand{\cl}{\overline}
\newcommand{\sst}{\subseteq}
\newcommand{\C}{\ensuremath{\mathcal{C}}}
\newcommand{\ot}{\otimes}
\newcommand{\I}{\ensuremath{\mathbbm{1}}}
\newcommand{\tto}{\Rightarrow}
\newcommand{\iiso}{\overset{\sim}{\Rightarrow}}
\newcommand{\Iso}{\ensuremath{\cong}}
\newcommand{\cI}{\mathcal I}
\newcommand{\cO}{\mathcal O}
\newcommand{\D}{\mathcal D}
\def \Eg   {{g}}
\def \Egi  {{g^{-1}}}
\def \Eh   {{h}}
\def \Ehi  {{h^{-1}}}
\def \Egh  {{gh}}
\def \Eghi {{(gh)^{-1}}}
\def \Cg   {{\C_{\Eg}}}
\def \Cgi  {{\C_{\Egi}}}
\def \Ch   {{\C_{\Eh}}}
\def \Chi  {{\C_{\Ehi}}}
\def \Cgh  {{\C_{\Egh}}}
\def \Cghi {{\C_{\Eghi}}}
\def \esp{{{}\mathop{\underline{\phantom{x}}}{}}}
\let\longto\longrightarrow
\def \1{\mathds {1}}
    \def \ug   {{\1_{\Eg}}}
    \def \ugi  {{\1_{\Egi}}}
    \def \uh   {{\1_{\Eh}}}
    \def \uhi  {{\1_{\Ehi}}}
    \def \ugh  {{\1_{\Egh}}}
    \def \ughi {{\1_{\Eghi}}}
\DeclareMathOperator{\Tr}{Tr} 
\def \Tg   {{T_{\Eg}}}
\def \Th   {{T_{\Eh}}}
\def \Tgh  {{T_{\Egh}}}
\def\smp{\mathbin{\underline{\rtimes}}}
\begin{document}\frenchspacing
\title[Partial actions of groups on monoidal categories]{Partial actions of groups on monoidal categories}

\author{Eliezer Batista}
\address{Departamento de Matem\'atica, Universidade Federal de Santa Catarina, Campus Reitor Jo\~ao David Ferreira Lima, Florian\'opolis, SC, CEP: 88040--900, Brazil}
\email{eliezer1968@gmail.com}

\author{Felipe Castro}
\address{Departamento de Matemática, Universidade Federal de Santa Catarina, Campus Reitor João David Ferreira Lima, Florianópolis, SC, CEP: 88040-900, Brazil}
\email{f.castro@ufsc.br}

\author{Mykola Khrypchenko}
\address{Departamento de Matem\'atica, Universidade Federal de Santa Catarina, Campus Reitor Jo\~ao David Ferreira Lima, Florian\'opolis, SC,  CEP: 88040--900, Brazil}
\email{nskhripchenko@gmail.com}

\subjclass[2020]{Primary 18M05; Secondary 18D25.}
\keywords{Partial action, strict monoidal category}

\begin{abstract}
	In this work, we introduce the notion of a partial action of a group on a strict monoidal category. We propose, in the context of Monoidal categories, new constructions analogous to those existing for partial group actions over an algebra such as the globalization, the subalgebra of partial invariants, and the partial smash product.
\end{abstract}

\maketitle

\tableofcontents

\section{Introduction}

Partial actions of groups were first introduced in the context of the theory of C*-algebras in order to describe some $\mathbb{Z}$-graded C*-algebras which are not isomorphic to a crossed product \cite{E-1}. Afterwards, a complete algebraic description of partial actions of groups on algebras and partial crossed products, including the Globalization Theorem, was given in \cite{DE}. Since then, the theory of partial group actions grew rapidly and gained a relevant place, both within pure algebra and in the theory of dynamical systems \cite{D}. 

In this work, we extend the existing boundaries in the theory of partial actions, proposing notions analogous to partial group action over an algebra, its globalization, the subalgebra of partial invariants and the partial smash product, to the context of Monoidal categories. For this purpose, we follow basically the constructions due to D. Tambara in \cite{Tambara} for group actions on Monoidal categories. 

There are several technical obstacles to be overcome in order to give meaning to partial group actions on monoidal categories. Firstly, a group can act partially on an algebra $A$ even when $A$ has no unity. Then it would be interesting to broaden our context to categories which admit a tensor product among its objects but not necessarily have a unit object. We call these categories semigroupal categories, among them there are those which are monoidal, that is, semigroupal categories which admit unit object. This notion is very general, including the poset of open subsets of a topological space, in which the tensor product is the intersection of open subsets can be viewed as a semigroupal category. 

In order to define a partial action of a group $G$ on an algebra $A$, we have to choose a family of ideals $\{ A_g \}_{g\in G}$ and a family of algebra isomorphisms $\{ \alpha_g :A_{g^{-1}}\rightarrow A_g \}_{g\in G}$ satisfying some compatibility conditions to be composed. We dedicate some effort to making sense of the notion of an ideal on a semigroupal category. One specific example of a partial group action on an algebra  is given when the ideals $A_g$ are principal ideals generated by central idempotent elements $\1_g \in A$, in particular, the algebra $A$ itself must be unital. These are the partial actions to which the globalization theorem applies \cite{DE}. In semigroupal and monoidal categories, one can define central idempotent objects \cite{BD-idemp-2014} and then construct special cases of partial actions. From a partial action of a group $G$ on a monoidal category $\mathcal{C}$ generated by central idempotents one construct the globalization by embeding the category $\mathcal{C}$ into a subcategory of functors from $G$ (viewed as a monoidal category) to $\mathcal{C}$. This construction follows the ideas given in \cite{DE} for the globalization of a unital partial action of a group $G$ on a unital algebra $A$.

Beyond the globalization, one can define two new monoidal categories out of a partial action of a group $G$ on a monoidal category $\mathcal{C}$: the partial equivariantization, $\mathcal{C}^{ \underline{G}}$, and the partial smash product $\mathcal{C} \smp G$. These are generalizations of the equivariantization $\mathcal{C}^G$ and the semi-direct product $\mathcal{C} [G]$, introduced by D. Tambara in \cite{Tambara}. In particular, the smash product makes sense only when the category $\mathcal{C}$ also admits additive structure, then we assume that $\mathcal{C}$ is a $\Bbbk$-linear, abelian monoidal category\footnote{In \cite{Tambara}, the author calls these categories {\emph{Tensor Categories}}, but this is not the most common nomenclature used nowadays. For example, in \cite{EGNO}, the term {\emph{Tensor Category}} means something much more restrictive.  }

This paper is divided as follows: In Section 2, we recall basic definitions of monoidal categories and functors and introduce the notions of a {\emph{Semigroupal category}}, consisting on a category with tensor products among its objects, but without imposing the  existence of a unit object, and of a {\emph{Semigroupal functor}}, which is a functor making compatible the respective tensor products of the semigroupal categories. Also in this preliminary section, we introduce the notion of ideals in semigroupal categories, idempotent objects in semigroupal categories \cite{BD-idemp-2014}  and group actions on semigroupal categories \cite{EGNO,Tambara}. In Section 3, partial actions of groups on semigroupal categories and their respective morphisms are defined. We will focus mainly on a particular type of partial action of a group on a monoidal category, which is given when the categorical ideals of the partial actions are generated by central idempotent objects in the category. A small digression is made in Section 4 just to prove that any partial action of a group on a semigroupal category gives rise to a Hopf Polyad in the sense of A. Bruguières \cite{Bruguieres}. Section 5 is devoted to the construction of the standard globalization of a partial group action on a monoidal category in the case when the partial action is generated by central idempotents. Finally, in Section 6, we introduce and discuss some properties of two new monoidal categories associated to a partial action of a group on a semigroupal or monoidal category, namely, the partial $G$-equivariantization and the partial smash product, this last one being well defined only for partial group actions on $\Bbbk$-linear, abelian, monoidal categories. In Section 7, we give some directions for future investigations.

\section{Preliminaries}\label{sec-prelim}

Throughout this paper, we will adopt, for a category $\mathcal{C}$, the following convention:
\begin{enumerate}
    \item $\mathcal{C}^{(0)}$ denotes the class of objects of $\mathcal{C}$.
    \item $\mathcal{C}^{(1)}$ denotes the class of all morphisms of $\mathcal{C}$.
    \item Given two objects $X,Y\in \mathcal{C}^{(0)}$, denote by $\text{Hom}_{\mathcal{C}} (X,Y)$ the set of morphisms in $\mathcal{C}$ from $X$ to $Y$ (here, all categories will be locally small).
    \item $\mathcal{C}^{(2)}$ denote the class of pairs of composable morphisms of $\mathcal{C}$.
\end{enumerate}

\subsection{Monoidal categories and functors}\label{sec-mcat}
\begin{Definition}\label{mon-cat-defn}
	A \emph{semigroupal category} is a triple $(\C,\ot,a)$, where
	\begin{enumerate}
		\item $\C$ is a category,
		\item $\ot \colon \C\times\C\to\C$ is a (covariant) functor, called the \emph{tensor product},
		\item $a \colon (\esp \ot \esp ) \ot \esp \iiso \esp \ot ( \esp \ot \esp)$ is a natural isomorphism, called the \emph{associator},
	\end{enumerate}
	such that the following pentagon diagram commutes.
	\begin{align}\label{pent-ax}
		\begin{tikzpicture}[xscale=2]
			\path ( 0, 0)   node (P0) {$((X\ot Y)\ot Z)\ot W$}
			    ++(-1,-1)   node (P1) {$(X\ot Y)\ot(Z\ot W)$}
			    ++( 0,-1)   node (P2) {$X\ot (Y\ot(Z\ot W))$}
			    (P0)++(1,-1) node (P4) {$(X\ot (Y\ot Z))\ot W$}
		        ++( 0,-1)   node (P3) {$X\ot((Y\ot Z)\ot W)$}
			[mor]
				(P0) edge node[left ] {$a_{X\ot Y,Z,W}$} (P1)
				(P1) edge node[left ] {$a_{X,Y,Z\ot W}$} (P2)
				(P3) edge node[below] {$\id_X\ot a_{Y,Z,W}$} (P2)
				(P4) edge node[right] {$a_{X,Y\ot Z,W}$} (P3)
				(P0) edge node[right] {$a_{X,Y,Z}\ot\id_W$} (P4)
			;
		\end{tikzpicture}
	\end{align} 
	A semigroupal category $(\C,\ot,a)$ is said to be \emph{monoidal} if it admits a triple $(\I,l,r)$, where
	\begin{enumerate}
		\item $\I$ is an object of $\C$, called the \emph{unit} of $\C$,
		\item $l \colon \I\ot \esp \iiso \Id$ and $r \colon \esp \ot \I \iiso \Id$ are natural isomorphisms, called the \emph{left and right unit isomorphisms},
	\end{enumerate}
	and the triangle diagram below commutes.
    \begin{align}\label{tri-ax}
        \begin{tikzpicture}
            \path ( 0,0) node (P2) {$X\ot Y$}
                ++( 1,1) node (P0) {$X\ot(\I\ot Y)$}
                ++(-2,0) node (P1) {$(X\ot\I)\ot Y$}
            [mor]
                (P1) edge node[above] {$a_{X,\I,Y}$}   (P0)
                (P1) edge node[left ] {$r_X\ot\id_Y$}  (P2)
                (P0) edge node[right] {$\id_X\ot l_Y$} (P2)
            ;
        \end{tikzpicture}
    \end{align}
\end{Definition}
A semigroupal category is called \emph{strict}, if $a$ is the identity transformation. In a \emph{strict monoidal} category one additionally assumes that $l$ and $r$ are identity.

\begin{Example}\label{categorymonoid}
    Each semigroup (resp. monoid) $M$ can be seen as a strict semigroupal (resp. monoidal) category whose objects are elements of $M$ and for all $x,y\in M$
\[
\Hom(x,y)=	\begin{cases}
				\{ \ast \}, & x=y,\\
				\emptyset, & x\ne y.
			\end{cases}
\]
The tensor product in this category is the multiplication in $M$, and $\id_x\ot\id_y=\id_{xy}$.
\end{Example}

\begin{Example}
Given a topological space $X$, one can define a strict monoidal category $\mathcal{O}(X)$, whose objects are the open subsets of $X$ and the morphisms between two open subsets $A$ and $B$ are given by
\[
\Hom(A,B) =
    \begin{cases}
        \{ \ast \}, & A\subseteq B,\\
        \emptyset, & A\nsubseteq B.
    \end{cases}
\]
The tensor product in this category is given by the intersection between open subsets and the unit object is given by the whole space $X$. It is easy to see that this monoidal category is strict.
\end{Example}

\begin{Example}
An example of a non strict monoidal category is the category ${}_R\mathcal{M}_R$ of bimodules over a ring $R$. In this case, the tensor product is the usual tensor product balanced over $R$, denoted by $\otimes_R$, and the unit object is the ring $R$ itself.
\end{Example}

\begin{Definition}\label{mon-funct-defn}
	Let $(\C,\ot,a)$ and $(\C',\ot',a')$ be semigroupal categories. A \emph{semigroupal functor} $\C\to\C'$ is a pair $(F,J)$, where
	\begin{enumerate}
		\item $F:\C\to\C'$ is a functor,
		\item $J:F(\esp)\ot'F(\esp)\iiso F(\esp \ot \esp)$ is a natural isomorphism,\label{F(-)-ot-F(-)-cong-F(-ot-)}
	\end{enumerate}
	such that the following hexagon diagram commutes.
	\begin{align}\label{hex-J-a}
		\begin{tikzpicture}[xscale=1.5]
			\path ( 0, 0) node (P0) {$(F(X)\ot' F(Y))\ot' F(Z)$}
			    ++(-1,-1) node (P1) {$F(X\ot Y)\ot' F(Z)$} 
			    ++( 0,-1) node (P2) {$F((X\ot Y)\ot Z)$}
			    ++( 1,-1) node (P3) {$F(X\ot (Y\ot Z))$}
			    ++( 1, 1) node (P4) {$F(X)\ot' F(Y\ot Z)$}
			    ++( 0, 1) node (P5) {$F(X)\ot' (F(Y)\ot' F(Z))$}
			[mor]
				(P0) edge node[left =1ex] {$J_{X,Y}\ot'\id_{F(Z)}$} (P1)
				(P1) edge node[left     ] {$J_{X\ot Y,Z}$}          (P2)
				(P2) edge node[left =1ex] {$F(a_{X,Y,Z})$}          (P3)
				(P4) edge node[right=1ex] {$J_{X,Y\ot Z}$}          (P3)
				(P5) edge node[right    ] {$\id_{F(X)}\ot' J_{Y,Z}$}(P4)
				(P0) edge node[right    ] {$a_{F(X),F(Y),F(Z)}$}    (P5)
			;
		\end{tikzpicture}
	\end{align}
	A semigroupal functor between two monoidal categories is called a \emph{monoidal} functor if, additionally, it respects the unital structures $(\I,l,r)$ and $(\I',l',r')$ of the monoidal categories in the sense that there exists an isomorphism $J^0 \colon \I'\to F(\I)$ making the following diagrams commute.
    \begin{align}
    \begin{tikzpicture}
        \path ( 0,0) node (P0) {$F(\I)\ot'F(X)$}
            ++( 2,0) node (P1) {$F(\I\ot X)$}
            ++( 0,1) node (P2) {$F(X)$}
            ++(-2,0) node (P3) {$\I'\ot'F(X)$}
        [mor]
            (P0) edge node [below] {$J_{\I,X}$} (P1)
            (P1) edge node [right] {$F(l_X)$} (P2)
            (P3) edge node [above] {$l'_{F(X)}$} (P2)
            (P3) edge node [left ] {$J^0\ot'\id_{F(X)}$} (P0)
        ;
    \end{tikzpicture}\label{square-J-l}\\
    \begin{tikzpicture}
        \path ( 0,0) node (P0) {$F(X)\ot'F(\I)$}
            ++( 2,0) node (P1) {$F(X\ot\I)$}
            ++( 0,1) node (P2) {$F(X)$}
            ++(-2,0) node (P3) {$F(X)\ot'\I'$}
        [mor]
            (P0) edge node[below]{$J_{X,\I}$} (P1)
            (P1) edge node[right]{$F(r_X)$} (P2)
            (P3) edge node[above]{$r'_{F(X)}$} (P2)
            (P3) edge node[left]{$\id_{F(X)}\ot'J^0$} (P0)
        ;
    \end{tikzpicture}\label{square-J-r}
    \end{align}
\end{Definition}
By a \emph{semigroupal} (resp. \emph{monoidal}) equivalence of semigroupal (resp. monoidal) categories we mean a semigroupal (resp. monoidal) functor $(F,J)$ (resp. $(F,J,J^0)$), such that $F$ is an equivalence of categories. 

\begin{Remark}
 The famous Mac Lane's strictness theorem states that every monoidal category is monoidally equivalent to a strict monoidal category \cite{MacLane63} (cf. also \cite[Theorem 2.8.5]{EGNO}). The proof of the coherence theorem, as done in \cite{Categories}, can also be adapted for semigroupal categories, then one can consider, without loss of generality, our semigroupal  categories as being strict. From now on, unless stated otherwise, we will be using strict semigroupal/monoidal categories.
\end{Remark}

\begin{Definition}\label{morph-mon-funct-defn}
	Let $(\C,\ot,a)$ and $(\C',\ot',a')$ be semigroupal categories, $(F,J)$ and $(\widetilde{F},\widetilde{J})$ semigroupal functors from $\C$ to $\C'$. A \emph{morphism} of semigroupal functors from $(F,J)$ to $(\widetilde{F},\widetilde{J})$ is a natural transformation $\eta \colon F\tto \widetilde{F}$, such that the following diagram commutes.
	\begin{align}\label{square-J-eta}
		\begin{tikzpicture}
		\path ( 0,0) node (P0) {$\widetilde{F}(X)\ot'\widetilde{F}(Y)$}
		    ++( 2,0) node (P1) {$\widetilde{F}(X\ot Y)$}
		    ++( 0,1) node (P2) {$F(X\ot Y)$}
		    ++(-2,0) node (P3) {$F(X)\ot'F(Y)$}
	    [mor]
			(P0) edge node [below] {$\widetilde{J}_{X,Y}$} (P1)
			(P2) edge node [right] {$\eta_{X\ot Y}$} (P1)
			(P3) edge node [above] {$J_{X,Y}$} (P2)
			(P3) edge node [left ] {$\eta_X\ot'\eta_Y$} (P0)
		;
		\end{tikzpicture}
	\end{align}
	A \emph{morphism of monoidal functors} from $(F,J,J^0)$ to $(\widetilde{F},\widetilde{J},\widetilde{J}^0)$ should additionally make the following diagram commute.
	\begin{align}\label{tri-vf-eta}
		\begin{tikzpicture}
			\path ( 0, 1) node (P0) {$\I'$}
			    ++(-1,-1) node (P1) {$F(\I)$}
			    ++( 2, 0) node (P2) {$\widetilde{F}(\I)$}
		    [mor]
    			(P0) edge node[left ] {$J^0$}       (P1)
    			(P1) edge node[below] {$\eta_\I$}   (P2)
    			(P0) edge node[right] {$\widetilde{J}^0$}     (P2)
			;
		\end{tikzpicture}
	\end{align}
\end{Definition}

\subsection{Ideals in semigroupal categories}\label{sec-ideals}

Let $\C$ be a category and $\cO$ a class of objects of $\C$. We say that $\cO$ is \emph{closed under isomorphisms} whenever for any $X\in\cO$ and $X'\in \C^{(0)}$ if $X'\Iso X$, then $X'\in\cO$. We define the \emph{isomorphism closure} of $\cO$ as
\[
	\cl\cO=\{X'\in\C\mid X'\Iso X\text{ for some }X\in\cO\}.
\]
Clearly, the class $\cl\cO$ is closed under isomorphisms, and it is the minimal class with this property containing $\cO$. 

More generally~\cite[Exercise I.4B]{AHS}, let $\D$ be a subcategory of $\C$, then $\D$ is said to be \emph{closed under isomorphisms} if for any $X\in \D^{(0)}$ and any isomorphism $\vf:X\to X'$ in $\C$ one has $X'\in \D^{(0)}$ and $\vf\in\D^{(1)}$. The minimum subcategory of $\C$ which is closed under isomorphisms and contains $\D$ will be called \emph{the isomorphism closure} of $\D$ and denoted by $\cl\D$. Observe that $(\cl\D)^{(0)}=\cl{\D^{(0)}}$ and, for $X'$ and $Y'$ objects in $\overline{\D}$, a morphism $\vf'\in\Hom_{\C}(X',Y')$ belongs to $\Hom_{\overline{\mathcal{D}}}(X' ,Y')$ if and only if there are $X,Y\in \D^{(0)}$, isomorphisms $\mu:X\to X'$, $\nu:Y\to Y'$ in $\C$ and a morphism $\vf :X\to Y$ in $\D$, such that the following diagram commutes.
\begin{equation}\label{morphism-in-cl-D}
	\begin{tikzpicture}
		\path ( 0,0) node (P0) {$X$}
		    ++( 1,0) node (P1) {$Y$}
		    ++( 0,1) node (P2) {$Y'$}
		    ++(-1,0) node (P3) {$X'$}
		[mor]
			(P0) edge node [below] {$\vf$} (P1)
			(P1) edge node [right] {$\nu$} (P2)
			(P3) edge node [above] {$\vf'$} (P2)
			(P0) edge node [left ] {$\mu$} (P3)
		;
	\end{tikzpicture}
\end{equation}
In particular, if $X$ and $Y$ are objects in $\mathcal{D}$, then $\Hom_{\D}(X,Y)=\Hom_{\overline{\D}}(X,Y)$.

Now let $\C$ be a semigroupal category. A subcategory $\D$ of $\C$ is said to be \emph{closed under left (resp. right) multiplication}, if for all $X\in \D^{(0)}$ and $Y\in \C^{(0)}$ one has $Y\ot X\in \D^{(0)}$ (resp. $X\ot Y\in \D^{(0)}$).
\begin{Definition}\label{ideal-defn}
	A subcategory $\cI$ of a semigroupal category $\C$ is a \emph{left (resp. right) ideal} of $\C$ if it is closed under isomorphisms and left (resp. right) tensor product. An \emph{ideal} of $\C$ is a subcategory $\cI\sst\C$ which is both a left and right ideal of $\C$.
\end{Definition}
Our notion of a left (right) ideal is a generalization of the notion of a left (right) tensor ideal in the sense of~\cite{BD-idemp-2014}. Observe also that (on the object level) our concept of an ideal is slightly weaker than the one given in~\cite{GPMV}. 

\begin{Remark}\label{intersect-ideal}
	It is clear that the intersection of two (left, right) ideals of $\C$ is a (left, right) ideal of $\C$.
\end{Remark}

\begin{Lemma}\label{closure-F(I)-ideal}
	Let $F$ be a semigroupal equivalence $\C\to\C'$. Then for any (left, right) ideal $\cI$ of $\C$ the subcategory\footnote{Note that $F(\cI)$ is a subcategory of $\C'$, as $F$ is full (see \cite[Remark I.4.2 (3)]{AHS}).} $\cl{F(\cI)}$ is a (left, right) ideal of $\C'$. Moreover, $F$ establishes an equivalence between $\cI$ and $\cl{F(\cI)}$.
\end{Lemma}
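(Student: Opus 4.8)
The plan is to verify directly the two closure conditions of \Cref{ideal-defn} for $\cl{F(\cI)}$, and then to check that the corestriction of $F$ to $\cl{F(\cI)}$ is fully faithful and essentially surjective. I treat only the left-ideal case; the right-ideal case is symmetric, and the two-sided case follows by combining them. Throughout I use that a semigroupal equivalence is, in particular, a fully faithful and essentially surjective functor equipped with a natural isomorphism $J\colon F(\esp)\ot'F(\esp)\iiso F(\esp\ot\esp)$.

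First, $\cl{F(\cI)}$ is closed under isomorphisms by the very definition of the isomorphism closure, so only closure under left tensor product needs checking. Let $X'\in(\cl{F(\cI)})^{(0)}$ and $Y'\in(\C')^{(0)}$. By definition of the closure there is $X\in\cI^{(0)}$ with $X'\Iso F(X)$ in $\C'$, and by essential surjectivity there is $Y\in\C^{(0)}$ with $Y'\Iso F(Y)$. Since $\ot'$ is a functor it preserves isomorphisms, so $Y'\ot'X'\Iso F(Y)\ot'F(X)$, and applying $J_{Y,X}$ gives $F(Y)\ot'F(X)\Iso F(Y\ot X)$. As $\cI$ is a left ideal and $X\in\cI^{(0)}$, we have $Y\ot X\in\cI^{(0)}$; hence $Y'\ot'X'\Iso F(Y\ot X)$ with $F(Y\ot X)\in(F(\cI))^{(0)}$, so $Y'\ot'X'\in(\cl{F(\cI)})^{(0)}$. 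This shows $\cl{F(\cI)}$ is a left ideal of $\C'$.

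For the equivalence, let $G\colon\cI\to\cl{F(\cI)}$ be the functor induced by $F$ (well defined, since $F$ sends $\cI^{(0)}$ into $(\cl{F(\cI)})^{(0)}$ and $\cI^{(1)}$ into $(\cl{F(\cI)})^{(1)}$). It is essentially surjective: any $X'\in(\cl{F(\cI)})^{(0)}$ admits an isomorphism $F(X)\Iso X'$ in $\C'$ with $X\in\cI^{(0)}$, and since $\cl{F(\cI)}$ is closed under isomorphisms this isomorphism lies in $\cl{F(\cI)}$, so $G(X)\Iso X'$ there. Faithfulness is immediate from $\Hom_\cI(A,B)\sst\Hom_\C(A,B)$ and the faithfulness of $F$.

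The delicate point is fullness, and this is the step I expect to require the most care. Given $A,B\in\cI^{(0)}$ and $\vf'\in\Hom_{\cl{F(\cI)}}(F(A),F(B))$, the characterization of morphisms in an isomorphism closure recorded in \eqref{morphism-in-cl-D} supplies $A_0,B_0\in\cI^{(0)}$, isomorphisms $\mu\colon F(A_0)\to F(A)$ and $\nu\colon F(B_0)\to F(B)$ in $\C'$, and $\psi\in\Hom_\cI(A_0,B_0)$ with $\vf'=\nu\circ F(\psi)\circ\mu\m$. As $F$ is fully faithful, $\mu$ and $\nu$ are the images of unique isomorphisms $\bar\mu\colon A_0\to A$ and $\bar\nu\colon B_0\to B$ of $\C$; and since $\cI$ is closed under isomorphisms with $A_0,B_0\in\cI^{(0)}$, the morphisms $\bar\mu$, $\bar\nu$ and $\bar\mu\m$ all lie in $\cI^{(1)}$. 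Setting $\chi:=\bar\nu\circ\psi\circ\bar\mu\m\in\Hom_\cI(A,B)$, functoriality yields $G(\chi)=F(\bar\nu\circ\psi\circ\bar\mu\m)=\nu\circ F(\psi)\circ\mu\m=\vf'$, so $G$ is full. Being fully faithful and essentially surjective, $G$ is an equivalence $\cI\to\cl{F(\cI)}$. The crux is precisely this reflection step: one must pull the witnessing isomorphisms $\mu,\nu$ back along $F$ and invoke closure of $\cI$ under isomorphisms to guarantee that $\chi$ stays inside the subcategory $\cI$, and not merely inside $\C$.
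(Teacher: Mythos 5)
Your proof is correct and follows the same skeleton as the paper's: the left-ideal part (replace $X'$ and $Y'$ by $F(X)$ and $F(Y)$ up to isomorphism, apply the semigroupal structure $J$, and use that $\cI$ is a left ideal of $\C$) is essentially word-for-word the paper's argument, and both proofs then obtain the equivalence by checking that the restriction of $F$ is essentially surjective and fully faithful. The one genuine divergence is at fullness. The paper settles it by citing the chain $\Hom_{\cl{F(\cI)}}(F(X),F(Y))=\Hom_{F(\cI)}(F(X),F(Y))=F(\Hom_{\cI}(X,Y))$, the first equality being the preliminary observation (stated after \cref{morphism-in-cl-D}) that hom-sets between objects of a subcategory $\D$ do not change when passing to $\cl\D$. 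You instead argue directly: unfold the description \cref{morphism-in-cl-D} of a morphism $\vf'$ of $\cl{F(\cI)}$, reflect the witnessing isomorphisms $\mu,\nu$ along the fully faithful functor $F$, and invoke closure of the ideal $\cI$ under isomorphisms to conclude that $\bar\nu\circ\psi\circ\bar\mu\m$ lies in $\cI$ and maps to $\vf'$. This buys something real: the preliminary claim the paper leans on, read for an arbitrary subcategory $\D$, can fail (an isomorphism of the ambient category between two objects of $\D$ always lies in $\cl\D$ but need not lie in $\D$); it is valid for $\D=F(\cI)$ precisely because $\cI$ is isomorphism-closed and a fully faithful functor reflects isomorphisms, which is exactly the content of your reflection step. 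So your write-up supplies, inside the proof itself, the justification that the paper's appeal to the preliminaries leaves implicit.
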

\begin{proof}
	
	We consider the case of a left ideal. Let $X'\in\cl{F(\cI)}$ and $Y'\in\C'$. We need to prove that $Y'\ot X'\in\cl{F(\cI)}$. There is $X\in\cI$, such that $X'\Iso F(X)$. Moreover, since $F$ is an equivalence, $Y'\Iso F(Y)$ for some $Y\in\C$. Then $Y'\ot X'\Iso F(Y)\ot F(X)$. But $F(Y)\ot F(X)\Iso F(Y\ot X)$ by \cref{F(-)-ot-F(-)-cong-F(-ot-)} of \cref{mon-funct-defn}. Hence, $Y'\ot X'\Iso F(Y\ot X)$. It remains to note that $Y\ot X\in\cI$, as $\cI$ is a left ideal.
	
	In order to prove the second affirmation, note that, by definition, $X'\in \overline{F(\mathcal{I})}$ means that there is an object $X\in \mathcal{I}$ such that $X' \Iso F(X)$, therefore, the functor $F|_{\mathcal{I}} :\mathcal{I} \rightarrow \overline{F(\mathcal{I})}$ is essentially surjective. Moreover, the same functor is fully faithful: given two objects $X$ and $Y$ in $\mathcal{I}$, we have that $F(X)$ and $F(Y)$ are objects in $F(\mathcal{I})$, then
	\[
	\Hom_{\cl{F(\cI)} }(F(X),F(Y))=\Hom_{F(\mathcal{I})} (F(X),F(Y)) =F(\Hom_{\cI}(X,Y)) ,
	\]
	where the last equality comes from the fact that the functor $F$ is an equivalence between $\C$ and $\C'$ and $\mathcal{I}$ is a subcategory of $\C$.
\end{proof}

\begin{Definition}\label{restr-F-to-I}
	By the \emph{restriction} of a semigroupal (resp. monoidal) equivalence $F:\C\to\C'$ to an ideal $\cI$ of $\C$ we mean the induced equivalence between $\cI$ and $\cl{F(\cI)}$.
\end{Definition}

\subsection{Idempotent objects in semigroupal categories}\label{sec:idemp-mcat}

\begin{Definition}[{\cite{BD-idemp-2014}}]\label{idempotentecentral}
	A \textit{central idempotent} in a semigroupal category \(\C\) is a triple \(\{e, \Phi_e, \sigma^e\}\), in which \(e\) is an object in \(\C\), \(\Phi_e \colon e \otimes e \to e\) is an isomorphism in \(\C\), henceforth called the \textit{fusion isomorphism}, and \(\sigma^e \colon e \otimes \esp \Rightarrow \esp \otimes e\) is a natural isomorphism in \(\C\), henceforth called the \textit{exchange isomorphism}, such that the following diagrams commute:

\begin{center}
\begin{equation}\label{idempotent1}
		\begin{tikzpicture}[xscale=2]
			\path ( 0, 0) node (P1) {\(e \otimes e \otimes e\)}
			    ++( 1, 0) node (P2) {\(e \otimes e\)}
			    ++( 0, -1) node (P3) {\( e \)}
			    ++( -1,0) node (P4) {\( e \otimes e\)}
			[mor]
			    (P1) edge node [above] {\(e \otimes \Phi_e \)} (P2)
			    (P2) edge node [right] {\( \Phi_e\)} (P3)
			    (P1) edge node [left ] {\(\Phi_e \otimes e \)} (P4)
			    (P4) edge node [below] {\(\Phi_e \)} (P3)
			;
		\end{tikzpicture}
\end{equation}
\end{center}

\begin{center}
\begin{equation}\label{idempotent2}
		\begin{tikzpicture}
			\path (0, 0) node (P1) {\(e \otimes e \)}
			    ++(2, 0) node (P2) {\( e \otimes e \)}
			    ++(-1,-1) node (P3) {\( e\)}
			[mor]
			    (P1) edge node [above ] {\(\sigma^e_e \)}   (P2)
			    (P2) edge node [right] {\( \Phi_e\)}   (P3)
			    (P1) edge node [left ] {\(\Phi_e \)} (P3)
		    ;
		\end{tikzpicture}
\end{equation}
\end{center}

\begin{center}
	\begin{equation}\label{idempotent3}
		\begin{tikzpicture}[xscale=2]
		\path ( 0, 0) node (P1) {\(e \otimes e \otimes A\)}
		++( 1, 0) node (P2) {\(e \otimes A \otimes e\)}
		++( 1, 0) node (P3) {\(A \otimes e \otimes e\)}
		++( 0,-1) node (P4) {\(A \otimes e\)}
		++(-2, 0) node (P5) {\(e \otimes A\)}
		[mor]
		(P1) edge node [above] {\(e \otimes \sigma^e_A\)} (P2)
		(P2) edge node [above] {\(\sigma^e_A \otimes e\)} (P3)
		(P3) edge node [right] {\(A \otimes \Phi_e\)} (P4)
		(P1) edge node [left ] {\(\Phi_e \otimes A\)} (P5)
		(P5) edge node [below] {\(\sigma^e_A\)} (P4)
		;
		\end{tikzpicture}
	\end{equation}
\end{center}
and

\begin{center}
	\begin{equation}\label{idempotent4}
		\begin{tikzpicture}
		\path (0, 0) node (P1) {\(e \otimes A \otimes B\)}
		++(1, 1) node (P2) {\(A \otimes e \otimes B\)}
		++(1,-1) node (P3) {\(A \otimes B \otimes e\)}
		[mor]
		(P1) edge node [left ] {\(\sigma^e_A \otimes B\)}   (P2)
		(P2) edge node [right] {\(A \otimes \sigma^e_B\)}   (P3)
		(P1) edge node [below] {\(\sigma^e_{A \otimes B}\)} (P3)
		;
		\end{tikzpicture}
	\end{equation}
\end{center}
\end{Definition}

If the semigroupal category is monoidal, it is interesting to explore the relation between idempotent objects and the monoidal unit. That approach was done in \cite{BD-idemp-2014}, in which two classes of idempotents were introduced: the closed and open idempotents. The \textit{closed idempotents} are objects $e$ in $\C$ with a morphism $\pi :\mathbbm{1}\rightarrow e$ such that $(\pi\otimes e)\circ l^{-1}_e :e\rightarrow e\otimes e$ and $(e\otimes \pi )\circ r^{-1}_e :e\rightarrow e\otimes e$ are isomorphisms. The \textit{open idempotents}, in their turn, are objects $e$ in $\C$ with a morphism $\imath :e\rightarrow \mathbbm{1}$ such that $l_e \circ (\imath\otimes e) :e\otimes e\rightarrow e$ and $r_e \circ (e\otimes \imath ) :e\otimes e \rightarrow e$ are isomorphisms. Our approach allows one to define idempotent objects even when there is no unit object in the whole category.

\begin{Example}
Consider the monoidal category $\mathcal{O} (X)$, consisting of the open sets of a topological space $X$. Any open subset $A\subseteq X$ is a central idempotent in this monoidal category, whose tensor product is given by the intersection. Here, the morphisms $\Phi_e$ and $\sigma^e$ are the identity.
\end{Example}

\begin{Example}
Consider the monoidal category $({}_R \mathcal{M}, \otimes_R, R)$ of modules over a commutative ring $R$. Let $I\trianglelefteq R$ be an ideal of $R$ such that $I^2=I$. It is easy  to see that $I$ is an open idempotent object in ${}_R \mathcal{M}$ with the inclusion map $\imath : I \rightarrow R$. It is central because the category ${}_R \mathcal{M}$ is symmetric.

On the other hand, the quotient $R/I$ is a closed idempotent with the canonical map $\pi: R \rightarrow R/I$. In order to verify that the map $R/I \otimes_R \pi :  R/I \otimes_R R  \rightarrow  R/I \otimes_R R/I $ is indeed an isomorphism, we only need to prove that it is injective, once it is already surjective. Consider an element
\[
\sum_{i=1}^n (a_i + I) \otimes_R b_i \in \text{Ker}(R/I \otimes_R \pi ) .
\]
Then we have
\[
\sum_{i=1}^n (a_i + I) \otimes_R (b_i +I)= 0.
\]
Therefore (see, for example, \cite[Subsection 2.12.10]{Wisbauer}), there exist elements $(x_j +I) \in R/I$, for $j\in \{ 1, \ldots , m \}$, and $r_{ji} \in R$ such that $\sum_{j=1}^m (x_j +I)\triangleleft r_{ji} =(a_i +I)$ for every $i\in \{ 1, \ldots , n \}$ and $\sum_{i=1}^n r_{ji} \triangleright (b_i +I) =0$ for every $j\in \{ 1, \ldots , m \}$. This implies that $\sum_{i=1}^n r_{ji} b_i \in I$ and, therefore, 
\[
\sum_{i=1}^n (a_i + I) \otimes_R b_i =\sum_{j=1}^m (x_j +I) \otimes_R \sum_{i=1}^n r_{ji}b_i =\sum_{j=1}^m \left( \sum_{i=1}^n x_j r_{ji}b_i  +I \right) \otimes_R 1_R =0.
\]
\end{Example}

\begin{Example}
Another source of idempotents consists of localizations of commutative rings. Given a multiplicative set $S$, which does not contain divisors of $0$, in a commutative ring $R$, the localization $S^{-1}R$ is an open central idempotent in the monoidal category ${}_R \mathcal{M}$ with the canonical map $\imath :R\rightarrow S^{-1}R$, sending an element $a$ into the fraction $\dfrac{a}{1}$. To prove this, we need to check that the map $S^{-1}R \otimes_R \imath :S^{-1}R \otimes_R R \rightarrow S^{-1}R \otimes_R S^{-1} R$ is an isomorphism. Let us create directly its inverse taking an element 
\[
\dfrac{a}{b} \otimes_R \dfrac{c}{d} \in S^{-1}R \otimes_R S^{-1} R .
\]
By an elementary manipulation, we have
\[
\dfrac{a}{b} \otimes_R \dfrac{c}{d} = \dfrac{ad}{bd} \otimes_R \dfrac{c}{d} = \dfrac{a}{bd} \otimes_R \dfrac{cd}{d} =\dfrac{a}{bd} \otimes_R \dfrac{c}{1} .
\]
Then, the obvious inverse to $S^{-1}R \otimes_R \imath$ is the map 
\[
\begin{array}{rccl}
\phi : & S^{-1}R \otimes_R S^{-1} R & \rightarrow & S^{-1}R \otimes_R R \cong S^{-1}R, \\
\, & \dfrac{a}{b} \otimes_R \dfrac{c}{d} & \mapsto & \dfrac{a}{bd} \otimes_R c.
\end{array}
\]
\end{Example}

\begin{Theorem}
Let $\C$ be a semigroupal category and $e$ be a central idempotent in $\C$. Then $\overline{e\otimes \C}$ defines an ideal in $\C$. Moreover, if $\mathcal{C}$ is a monoidal category, then $\overline{e\otimes \mathcal{C}}$ is a monoidal category
\end{Theorem}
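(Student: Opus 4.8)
The plan is to prove the two assertions in turn: first that $\cl{e\ot\C}$ satisfies the axioms of \cref{ideal-defn}, and then, under the monoidal hypothesis, to equip it with a unit making it monoidal. Throughout I work in the strict setting permitted by the coherence remark, so that associators are identities and parentheses may be dropped.

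For the ideal part, closure under isomorphisms is built into the definition of the isomorphism closure $\cl{\esp}$. For closure under right multiplication, given $X\in\cl{e\ot\C}$ and $Y\in\C^{(0)}$, I would use that $X\Iso e\ot A$ for some $A$, so that $X\ot Y\Iso(e\ot A)\ot Y=e\ot(A\ot Y)\in e\ot\C$, whence $X\ot Y\in\cl{e\ot\C}$. Closure under left multiplication is where the exchange isomorphism $\sigma^e$ of \cref{idempotentecentral} is needed, associativity alone being insufficient: from $Y\ot X\Iso Y\ot(e\ot A)=(Y\ot e)\ot A$ and the isomorphism $(\sigma^e_Y)^{-1}\colon Y\ot e\to e\ot Y$ one gets $Y\ot X\Iso e\ot(Y\ot A)\in e\ot\C$, so again $Y\ot X\in\cl{e\ot\C}$. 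This shows $\cl{e\ot\C}$ is an ideal.

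Now assume $\C$ is monoidal with unit $\I$. Since $\cl{e\ot\C}$ is an ideal it is closed under $\ot$, so the tensor product of $\C$ restricts to it and the (identity) associator restricts as well, the pentagon being inherited. The candidate unit is $e$ itself, which lies in $e\ot\C$ because $e=e\ot\I$. The unit constraints are \emph{not} inherited from $\C$, since its unit $\I$ need not lie in the ideal; instead I would build them from the idempotent data, setting on an object of the form $e\ot A$
\[
l^e_{e\ot A}=\Phi_e\ot\id_A\colon e\ot(e\ot A)\to e\ot A,\qquad
r^e_{e\ot A}=(\Phi_e\ot\id_A)\circ(\id_e\ot(\sigma^e_A)^{-1})\colon (e\ot A)\ot e\to e\ot A,
\]
and then extending them to all of $\cl{e\ot\C}$ by transport along the isomorphisms that witness membership in the closure, as described by the diagram in \eqref{morphism-in-cl-D}.

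Finally, naturality of $l^e$ on the objects $e\ot A$ reduces to naturality of $\Phi_e\ot(\esp)$, and that of $r^e$ additionally to naturality of $\sigma^e$; independence of the extension from the chosen presentation $X\Iso e\ot A$ follows from the characterization of morphisms in the isomorphism closure, using that the native morphisms of $e\ot\C$ have the form $\id_e\ot f$. The remaining point is the triangle identity: since the inherited associator is trivial, it amounts to the equality $r^e_X\ot\id_Y=\id_X\ot l^e_Y$ for $X,Y$ in the ideal, which after unwinding the definitions becomes a diagram in $e\ot A\ot e\ot e\ot B$ that I would close using \eqref{idempotent1}--\eqref{idempotent4}, chiefly the compatibility \eqref{idempotent3} of $\Phi_e$ with $\sigma^e$ together with \eqref{idempotent2}. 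I expect this coherence bookkeeping---checking that $l^e$ and $r^e$ are well defined on the isomorphism closure and that the triangle commutes---to be the main obstacle. It is worth noting the slightly surprising feature that, even when $\C$ is strict, the category $\cl{e\ot\C}$ is genuinely non-strict, its unit constraints being governed by the fusion isomorphism $\Phi_e$ rather than by identities.
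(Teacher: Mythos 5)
Your proposal is correct and follows essentially the same route as the paper's proof: the ideal part uses associativity on one side and the exchange isomorphism $\sigma^e$ on the other, and the monoidal part takes $e$ as unit, defines the left constraint by transporting $\Phi_e$ along the isomorphisms $X\Iso e\ot A$ witnessing membership in the closure, and obtains the right constraint by precomposing with the inverse exchange, exactly as the paper's $L^e$ and $R^e$. The bookkeeping you defer (independence of the chosen presentation, naturality, and the triangle identity via \eqref{idempotent1}--\eqref{idempotent4}) is precisely what the paper carries out, with the same ingredients you name.
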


\begin{proof}
We have by construction that $\overline{e\otimes \C}$ is closed by isomorphisms. Now, given $X\in \C^{(0)}$ and $Y\in \overline{e\otimes \C}$, there is an object $Y' \in \C$ such that $Y\Iso e\otimes Y'$. It is obvious that $Y\otimes X \Iso e\otimes Y' \otimes X$, then $Y\otimes X\in \overline{e\otimes \C}$. On the other hand, $X\otimes Y \Iso X\otimes e \otimes Y' \Iso e\otimes X \otimes Y'$, in which the last isomorphism is performed by $(\sigma^e_X)^{-1} \otimes Y'$. Therefore $X\otimes Y \in \overline{e\otimes \C}$.

For the case of $\mathcal{C}$ being a monoidal category, the object $e \cong e\otimes \1_{\mathcal{C}} \in \overline{e\otimes \C} $. Moreover, for any $X\in \overline{e\otimes \C}^{(0)}$, there is an object $X' \in \mathcal{C}^{(0)}$ and an isomorphism $\varphi_X :X\rightarrow e\otimes X'$, then, we have the following sequence of isomorphisms \\
\centerline{\xymatrix{ e\otimes X \ar[r]^-{e\otimes \varphi_X}  & e\otimes e \otimes X' \ar[r]^-{\Phi_e \otimes X'} &  e\otimes X' \ar[r]^-{\varphi_X^{-1}} & X . }}
Denote this sequence of isomorphims by $L^e_X :e\otimes X \rightarrow X$, which is an isomorphism in the category $\overline{e\otimes \mathcal{C}}$.
This definition should be independent of the choice of the particular isomorphism $\varphi_X :X\rightarrow e\otimes X'$, meaning that, for any other isomorphism $\psi_X :X\rightarrow e\otimes X''$, and denoting by $\theta :e\otimes X' \rightarrow e\otimes X''$ the isomorphism such that $\theta \circ \varphi_X =\psi_X$, the following diagram commutes:\\
\centerline{\xymatrix{e\otimes e \otimes X' \ar[rrr]^-{\mu_e \otimes X'} 
		\ar[dd]_-{e\otimes \theta} & & & e\otimes X' \ar[dl]_-{(\varphi_X)^{-1}} \ar[dd]^-{\theta} \\
		& e\otimes X \ar[r]^-{L^e_X} \ar[ul]_-{e\otimes \varphi_X} \ar[dl]_-{e\otimes \psi_X} & X & \\
		e\otimes e\otimes X'' \ar[rrr]^-{\mu_e \otimes X''} & & & e\otimes X'' \ar[ul]_-{(\psi_X)^{-1}}}}

The naturality of $L^e :e\otimes \underline{\quad}\Rightarrow \text{Id}_{\overline{e\otimes \mathcal{C}}}$ can be easily obtained. Indeed, consider $f\in \text{Hom}_{\overline{e\otimes \mathcal{C}}}(X,Y)$, then there are isomorphisms $\varphi_X :X\rightarrow e\otimes X'$ and $\varphi_Y :Y \rightarrow e\otimes Y'$ and a morphism $f':X' \rightarrow Y'$ such that the following diagram commutes\\
\centerline{\xymatrix{X \ar[rr]^-{f}\ar[dd]_-{\varphi_X} & & Y\ar[dd]^-{\varphi_Y} \\
		& & \\
		e\otimes X' \ar[rr]_-{e\otimes f'} & & e\otimes Y'}}
Then
\begin{eqnarray*}
	f\circ L^e_X & = & f\circ \varphi_X^{-1}\circ (\Phi_e \otimes X')\circ (e\otimes \varphi_X) \\
	& = & \varphi_Y^{-1} \circ (e\otimes f')\circ (\Phi_e \otimes X')\circ (e\otimes \varphi_X) \\
	& = & \varphi_Y^{-1} \circ (\Phi_e \otimes Y')\circ (e\otimes e \otimes f')\circ (e\otimes \varphi_X) \\
	& = & \varphi_Y^{-1} \circ (\Phi_e \otimes Y')\circ (e\otimes \varphi_Y) \circ (e\otimes f)\\
	& = & L^e_Y \circ (e\otimes f). 
\end{eqnarray*}
Denote by $R^e_X :X\otimes e \rightarrow X$ the composition\\
\centerline{\xymatrix{ X\otimes e \ar[r]^-{(\sigma^e_X)^{-1}} &  e\otimes X \ar[r]^-{L^e_X} & X .}}
As $R^e :\underline{\quad} \otimes e \Rightarrow \text{Id}_{\overline{e\otimes \mathcal{C}}}$ is a composition of two natural transformations, then it is itself a natural transformation.
Therefore, $\overline{e\otimes \C}$ is a monoidal category with monoidal unit $e$. In fact, the triangle axiom can be easily proved using the compatibility relations in Definition \ref{idempotentecentral}. On the one hand we have
\begin{eqnarray*}
	X\otimes L^e_Y & = & (X\otimes \varphi_Y^{-1})\circ (X\otimes \Phi_e \otimes Y')\circ (X\otimes e \otimes \varphi_Y)\\
	& = & (\varphi_X^{-1} \otimes \varphi_Y^{-1})\circ (e\otimes X' \otimes \Phi_e \otimes Y')\circ (\varphi_X \otimes e \otimes \varphi_Y).
\end{eqnarray*}
On the other hand, we have
\begin{eqnarray*}
	& \, & R^e_X \otimes Y = (\varphi_X^{-1} \otimes Y)\circ (\Phi_e \otimes X' \otimes Y)\circ (e\otimes \varphi_X \otimes Y)\circ ( (\sigma^e_X)^{-1} \otimes Y) \\
	& \stackrel{(I)}{=} & (\varphi_X^{-1} \otimes Y)\circ (\Phi_e \otimes X' \otimes Y) \circ ( (\sigma^e_{e\otimes X'})^{-1} \otimes Y)  \circ  (\varphi_X \otimes e \otimes Y) \\
	& \stackrel{(II)}{=} & (\varphi_X^{-1} \otimes Y)\circ (\Phi_e \otimes X' \otimes Y) \circ ( (\sigma^e_{e})^{-1} \otimes X' \otimes Y)\circ ( e\otimes (\sigma^e_{X'})^{-1} \otimes Y)  \circ  (\varphi_X \otimes e \otimes Y) \\
	& \stackrel{(III)}{=} & (\varphi_X^{-1} \otimes Y)\circ (\Phi_e \otimes X' \otimes Y) \circ ( e\otimes (\sigma^e_{X'})^{-1} \otimes Y)  \circ  (\varphi_X \otimes e \otimes Y) \\
	& \stackrel{(IV)}{=} & (\varphi_X^{-1} \otimes Y)\circ ( (\sigma^e_{X'})^{-1} \otimes Y)\circ (X'\otimes  \Phi_e \otimes Y) \circ  ( \sigma^e_{X'} \otimes e \otimes  Y) \circ  (\varphi_X \otimes e \otimes Y) \\
	& \stackrel{(V)}{=} & (\varphi_X^{-1} \otimes \varphi_Y^{-1})\circ ( (\sigma^e_{X'})^{-1} \otimes e \otimes Y')\circ (X'\otimes  \Phi_e \otimes e \otimes Y') \circ  ( \sigma^e_{X'} \otimes e \otimes e\otimes  Y') \circ  (\varphi_X \otimes e \otimes \varphi_Y) \\
	& \stackrel{(VI)}{=} & (\varphi_X^{-1} \otimes \varphi_Y^{-1})\circ ( (\sigma^e_{X'})^{-1} \otimes e \otimes Y')\circ (X'\otimes e \otimes  \Phi_e \otimes Y') \circ  ( \sigma^e_{X'} \otimes e \otimes e\otimes  Y') \circ  (\varphi_X \otimes e \otimes \varphi_Y) \\
	& \stackrel{(VII)}{=} & (\varphi_X^{-1} \otimes \varphi_Y^{-1})\circ ( (\sigma^e_{X'})^{-1} \otimes e \otimes Y')\circ  ( \sigma^e_{X'} \otimes e \otimes  Y') \circ (e\otimes X' \otimes  \Phi_e \otimes Y') \circ  (\varphi_X \otimes e \otimes \varphi_Y) \\
	& = & (\varphi_X^{-1} \otimes \varphi_Y^{-1})\circ (e\otimes X' \otimes  \Phi_e \otimes Y') \circ  (\varphi_X \otimes e \otimes \varphi_Y) .
\end{eqnarray*}
Here, identity (I) follows from the naturality of $\sigma^e$. Identity (II) is consequence of diagram (\ref{idempotent4}). Diagram (\ref{idempotent2}) is responsible for the identity (III) and (IV) follows from (\ref{idempotent3}). In (V) we just open $Y$ into $e\otimes Y'$, and closed it again by the isomorphism $\varphi_Y$. Identity (V) is simply a consequence of diagram (\ref{idempotent1}) taking into account that $\Phi_e$ is an isomorphism. Finally, identity (VII) comes again from the naturality of $\sigma^e$.

Therefore, $\overline{e\otimes \mathcal{C}}$ is a monoidal category with monoidal unit given by $e$.

\end{proof}

\subsection{Group actions on semigroupal categories}\label{sec-act-mcat}

Given a semigroupal (resp. monoidal) category $\C$, we denote by $\End_\ot(\C)$ the category of semigroupal (resp. monoidal) endofunctors of $\C$ (and morphisms between them as in \cref{morph-mon-funct-defn}). This is a strict monoidal category with tensor product of objects being the composition of functors and tensor product of morphisms being the horizontal composition of natural transformations. The full subcategory of $\End_\ot(\C)$ consisting of the semigroupal (resp. monoidal) auto-equivalences of $\C$ will be denoted by $\Aut_\ot(\C)$.

\begin{Definition}\label{act-of-G-on-C}
An {\emph{action}} of a group $G$ on a semigroupal (resp. monoidal) category $\C$ is a monoidal functor from $G$, seen as a monoidal category, to $\Aut_\ot(\C)$.
\end{Definition}

\begin{Remark}\

\begin{enumerate}[\bf a)]
	\item Using \cref{mon-funct-defn}, we rewrite this as follows: an action of $G$ on $\C$ is a triple  
	\[
	(\{T_g\}_{g\in G},\{\g_{g,h}\}_{g,h\in G},u),
	\]
	in which
	
	\begin{enumerate}
		\item $T_g$ is a semigroupal (resp. monoidal) auto-equivalence of $\C$,
		\item $\g_{g,h}$ is a natural semigroupal (resp. monoidal) isomorphism $T_gT_h\iiso T_{gh}$,\label{T_gT_h-cong-T_gh}
		\item $u$ is a natural semigroupal (resp. monoidal) isomorphism $\Id_\C\iiso T_e$,\label{T_1-cong-Id_C}
	\end{enumerate}
	such that the following diagram commutes
	\begin{align}\label{square-T-gamma}
		\begin{tikzpicture}[xscale=2]
		\path ( 0,0) node (P0) {$T_{gh}T_k(X)$}
		++( 1,0) node (P1) {$T_{ghk}(X)$}
		++( 0,1) node (P2) {$T_gT_{hk}(X)$}
		++(-1,0) node (P3) {$T_gT_hT_k(X)$}
		[mor]
		(P0) edge node [below] {$(\g_{gh,k})_X$} (P1)
		(P2) edge node [right] {$(\g_{g,hk})_X$} (P1)
		(P3) edge node [above] {$T_g((\g_{h,k})_X)$} (P2)
		(P3) edge node [left ] {$(\g_{g,h})_{T_k(X)}$} (P0)
		;
		\end{tikzpicture}
	\end{align}
	and the following morphisms are mutually inverse.
	\begin{align}
		\begin{tikzpicture}[xscale=1.5]
		\path (0,0) node (P0) {$T_g(X)$}
		++(1,0) node (P1) {$T_eT_g(X)$}
		[mor]
		([yshift= .5ex]P0.east) edge node [above] {$u_{T_g(X)}$}   ([yshift= .5ex]P1.west)
		([yshift=-.5ex]P1.west) edge node [below] {$(\g_{e,g})_X$} ([yshift=-.5ex]P0.east)
		;
		\end{tikzpicture}\label{(gamma_g1)_X-u_(T_g(X))}\\
		\begin{tikzpicture}[xscale=1.5]
		\path (0,0) node (P0) {$T_g(X)$}
		++(1,0) node (P1) {$T_gT_e(X)$}
		[mor]
		([yshift= .5ex]P0.east) edge node [above] {$T_g(u_X)$}     ([yshift= .5ex]P1.west)
		([yshift=-.5ex]P1.west) edge node [below] {$(\g_{g,e})_X$} ([yshift=-.5ex]P0.east)
		;
		\end{tikzpicture}\label{(gamma_1g)_X-T_g(u_X)}
	\end{align}
	
	\item Observe also that 
	\begin{align}
		u\m\g_{g,g\m}:T_gT_{g\m}&\iiso\Id_\C,\label{u-inv-gamma_(g_g-inv)}\\ u\m\g_{g\m,g}:T_{g\m}T_g&\iiso\Id_\C.\label{u-inv-gamma_(g-inv_g)}
	\end{align}
	
	\item The fact that the functors $T_g :\C \rightarrow \C$ are semigroupal (resp. monoidal) implies that they are defined as a pair $(T_g , J^g)$ (resp. a triple $(T_g , J^g , J^{g0})$ in the monoidal case) in which $J^g :T_g (\esp ) \otimes T_g (\esp ) \Rightarrow T_g (\esp \otimes \esp )$ is a natural isomorphism and, for the monoidal case, $J^{g0}:\1 \rightarrow T_g (\1 )$ is an isomorphism in the category $\C$, satisfying the standard hexagon diagram (\ref{hex-J-a}) and, for the monoidal case, the square diagrams (\ref{square-J-l}) and (\ref{square-J-r}).
	
	\item The fact that \(\gamma_{g,h}\) and \(u\) are semigroupal (resp. monoidal) natural isomorphisms implies that they satisfy the commutativity of the square diagram (\ref{square-J-eta}) and, in the monoidal case, the triangle diagram (\ref{tri-vf-eta}).
\end{enumerate}
\end{Remark}

The following technical lemma will be used in the sequel.

\begin{Lemma}\label{closure-T_g(I)}
Let $T$ be an action of a group $G$ on a semigroupal category $\C$ and $\cI$ an ideal in $\C$. Then
\begin{enumerate}
	\item $\cl{T_e(\cI)}=\cI$;\label{cl-T_1(I)=I}
	\item $\cl{T_g(\cl{T_h(\cI)})}=\cl{T_{gh}(\cI)}$.\label{cl-T_g(cl-T_h(I))=cl-T_gh(I)}
\end{enumerate}
\end{Lemma}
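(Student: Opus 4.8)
The plan is to isolate two elementary facts about isomorphism closures and then deduce both items by rewriting the functors involved up to the natural isomorphisms $u$ and $\g_{g,h}$ supplied by the action. Throughout I shall use that an ideal is closed under isomorphisms, so that $\cl{\cI}=\cI$, and that each $T_g$, being an auto-equivalence, is full, so that $T_e(\cI)$, $T_h(\cI)$, $T_gT_h(\cI)$ are genuinely subcategories and their closures make sense (cf.\ \cref{closure-F(I)-ideal} and its footnote). The first fact is \emph{transport along a natural isomorphism}: if $S,S'\colon\C\to\C'$ are full functors and $\eta\colon S\iiso S'$ is a natural isomorphism, then $\cl{S(\D)}=\cl{S'(\D)}$ for every subcategory $\D\sst\C$. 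On objects this is immediate, since $\eta_X\colon S(X)\to S'(X)$ witnesses $S(X)\Iso S'(X)$, so $S(\D)^{(0)}$ and $S'(\D)^{(0)}$ have the same isomorphism closure. On morphisms I would invoke the characterization \eqref{morphism-in-cl-D}: a morphism $\Psi\in\cl{S(\D)}$ satisfies $\Psi\circ\mu=\nu\circ S(\psi)$ for some $\psi$ in $\D$ and isomorphisms $\mu,\nu$; naturality rewrites $S(\psi)=\eta_Y^{-1}\circ S'(\psi)\circ\eta_X$, and replacing $\mu,\nu$ by $\mu\circ\eta_X^{-1}$, $\nu\circ\eta_Y^{-1}$ exhibits $\Psi$ as a morphism of $\cl{S'(\D)}$. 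The reverse inclusion is symmetric.

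The second fact is \emph{absorption of an inner closure}: if $F\colon\C\to\C'$ is full, then $\cl{F(\cl{\D})}=\cl{F(\D)}$. Monotonicity of $\cl{-}$ gives $\cl{F(\D)}\sst\cl{F(\cl{\D})}$, and for the converse it suffices to prove $F(\cl{\D})\sst\cl{F(\D)}$ and then close, using idempotency of $\cl{-}$. This last inclusion again splits into the obvious object statement ($F$ preserves the isomorphisms that define $\cl{\D}$) and a morphism statement obtained by applying $F$ to the factorization \eqref{morphism-in-cl-D} and noting that $F$ carries isomorphisms to isomorphisms.

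Granting these, item (i) is immediate: the natural isomorphism $u\colon\Id_\C\iiso T_e$ together with the transport fact (with $S=\Id_\C$, $S'=T_e$, $\eta=u$) yields $\cl{T_e(\cI)}=\cl{\Id_\C(\cI)}=\cl{\cI}=\cI$. For item (ii) I would chain the two facts using the structural isomorphism $\g_{g,h}\colon T_gT_h\iiso T_{gh}$ of the action:
\[
	\cl{T_g(\cl{T_h(\cI)})}=\cl{T_gT_h(\cI)}=\cl{T_{gh}(\cI)},
\]
where the first equality is the absorption fact applied to $F=T_g$ and $\D=T_h(\cI)$, and the second is the transport fact applied to $S=T_gT_h$, $S'=T_{gh}$, $\eta=\g_{g,h}$, $\D=\cI$.

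I expect the only genuine work to lie in the morphism-level verifications inside the two auxiliary facts, where one must carry the factorization \eqref{morphism-in-cl-D} through the natural isomorphism and carefully reassociate the component isomorphisms $\eta_X,\eta_Y$ (respectively $F(\mu),F(\nu)$) so that the rewritten commutative square again has exactly the shape demanded by \eqref{morphism-in-cl-D}. The object-level equalities and the concluding two-step chain are then purely formal.
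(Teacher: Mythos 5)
Your proof is correct, and it reorganizes the paper's argument rather than reproducing it. The paper proves both items directly for the specific functors at hand: for each item it checks the object level and the morphism level separately, establishes one inclusion by pasting the factorization square \cref{morphism-in-cl-D} with the naturality square of $u$ (for item (i)) or with the naturality square of $\g_{g,h}$ after applying $T_g$ (for item (ii)), and then disposes of the converse inclusion with a ``proved similarly''. You instead factor the whole statement through two standalone facts about isomorphism closures --- transport of $\cl{S(\D)}$ along a natural isomorphism $S\iiso S'$ of full functors, and absorption $\cl{F(\cl\D)}=\cl{F(\D)}$ for $F$ full --- after which both items become one-line formal chains. The diagram chases hidden inside your two lemmas are exactly the paper's pasted diagrams: your transport lemma with $S=\Id_\C$, $\eta=u$ is the paper's proof of (i), and transport with $\eta=\g_{g,h}$ combined with absorption is its proof of (ii), so no new mathematical content is required. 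What your packaging buys is that both directions of each equality come for free from the symmetric form of the lemma statements (no ``similarly'' appeals), that the role of fullness --- needed for images to be honest subcategories, as in the footnote to \cref{closure-F(I)-ideal} --- is isolated exactly where it is used, and that the two auxiliary facts are reusable general statements about closures rather than ad hoc verifications. The cost is that one must state and verify the lemmas at the level of the morphism characterization \cref{morphism-in-cl-D}; you correctly identify this as the only genuine work, and your sketch of it (rewriting $S(\psi)=\eta_Y^{-1}\circ S'(\psi)\circ\eta_X$ and replacing $\mu,\nu$ by $\mu\circ\eta_X^{-1},\nu\circ\eta_Y^{-1}$, respectively applying $F$ to the factorization square) is exactly right.
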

\begin{proof}
For \cref{cl-T_1(I)=I} observe that $\cI^{(0)} \subseteq (\cl{T_e(\cI)})^{(0)}$, as any $X\in \cI^{(0)}$ is isomorphic to $T_e(X)$ by the isomorphism $u_X :X\rightarrow T_e (X)$. Now, consider any $Y\in (\cl{T_e(\cI)})^{(0)}$, then there is an object $X$ in $\cI$ such that $Y\Iso T_e(X)$, therefore $Y\Iso X$, which makes $Y \in \cI^{(0)}$. Therefore, $\cI^{(0)} = (\cl{T_e(\cI)})^{(0)}$. Now let $X',Y'\in (\cl{T_e (\cI)})^{(0)}$ and $f'\in\Hom_{\cl{T_e(\cI)}}(X',Y')$. By \cref{morphism-in-cl-D} there exist $X,Y\in \cI^{(0)}$, $f\in\Hom_{\cI}(X,Y)$ and isomorphisms $\mu:T_e(X)\to X'$ and $\nu:T_e(Y)\to Y'$ making the upper square of \cref{Mor(cl-T_1(I))=Mor(I)} commute.
\begin{align}\label{Mor(cl-T_1(I))=Mor(I)}
	\begin{tikzpicture}[xscale=1.5]
	\path
	(0,0) node (P0) {$X$}
	(1,0) node (P1) {$Y$}
	(0,1) node (P2) {$T_e(X)$}
	(1,1) node (P3) {$T_e(Y)$}
	(0,2) node (P4) {$X'$}
	(1,2) node (P5) {$Y'$}
	[mor]
	(P0) edge node [above] {$f$}      (P1)
	(P0) edge node [left ] {$u_X$}    (P2)
	(P1) edge node [right] {$u_Y$}    (P3)
	(P2) edge node [above] {$T_e(f)$} (P3)
	(P2) edge node [left ] {$\mu$}    (P4)
	(P3) edge node [right] {$\nu$}    (P5)
	(P4) edge node [above] {$f'$}     (P5)
	;
	\end{tikzpicture}
\end{align}
The commutativity of the lower square of \cref{Mor(cl-T_1(I))=Mor(I)} is the naturality of $u$ from \cref{T_1-cong-Id_C} of the definition of a $G$-action \cite{Tambara}. Hence, the perimeter of \cref{Mor(cl-T_1(I))=Mor(I)} commutes, which means by \cref{morphism-in-cl-D} that $f'\in\Hom_{\cl{\cI}}(X' ,Y')=\Hom_{\cI}(X' ,Y')$. Thus $ \cl{T_e(\cI {}^{(1)}})\sst \cI^{(1)}$. The converse inclusion is explained by the commutativity of the lower square of \cref{Mor(cl-T_1(I))=Mor(I)}.

To prove \cref{cl-T_g(cl-T_h(I))=cl-T_gh(I)}, take $X'\in (\cl{T_h(\cI)})^{(0)}$, i.\,e. $X'\Iso T_h(X)$ for some $X\in \cI^{(0)}$. Then $T_g(X')\Iso T_gT_h(X)$, and since $T_gT_h(X)\Iso T_{gh}(X)$, it follows that $T_g(X')\Iso T_{gh}(X)$, so $T_g(X')\in (\cl{T_{gh}(\cI)})^{(0)}$. Thus, $ (T_g(\cl{T_h(\cI)}))^{(0)} \sst (\cl{T_{gh}(\cI)})^{(0)} $ and consequently, $ (\cl{T_g(\cl{T_h(\cI)})})^{(0)} \sst (\cl{T_{gh}(\cI)})^{(0)} $. The converse inclusion is proved similarly. As to morphisms, consider $X',Y'\in (\cl{T_h(\cI)})^{(0)} $ and $f'\in\Hom_{\cl{T_h(\cI)}}(X',Y')$. There are $X,Y\in \cI^{(0)}$, $f\in \Hom_{I} (X,Y)$ and a pair of isomorphisms $\mu:T_h(X)\to X'$, $\nu:T_h(Y)\to Y'$ making the following square commute.

\begin{align}\label{morph-in-cl-T_h(I)}
	\begin{tikzpicture}
	\path
	node (P0) at (0,0) {$T_h(X)$}
	node (P1) at (2,0) {$T_h(Y)$}
	node (P2) at (0,1) {$X'$}
	node (P3) at (2,1) {$Y'$}
	[mor]
	(P0) edge node [below] {$T_h(f)$} (P1)
	(P0) edge node [left ] {$\mu$} (P2)
	(P1) edge node [right] {$\nu$} (P3)
	(P2) edge node [above] {$f'$} (P3)
	;
	\end{tikzpicture}
\end{align}
Applying $T_g$ to \cref{morph-in-cl-T_h(I)} and using the naturality of $\g_{g,h}$ (see \cref{T_gT_h-cong-T_gh} from the definition of a $G$-action \cite{Tambara}), we obtain the following diagram whose squares are commutative.

\begin{align}\label{morph-T_g(cl-T_h(I))is-morph-cl-T_gh(I)}
	\begin{tikzpicture}[scale=1.3]
	\path
	node (P0) at (0,0) {$T_{gh}(X)$}
	node (P1) at (2,0) {$T_{gh}(Y)$}
	node (P2) at (0,1) {$T_gT_h(X)$}
	node (P3) at (2,1) {$T_gT_h(Y)$}
	node (P4) at (0,2) {$T_g(X')$}
	node (P5) at (2,2) {$T_g(Y')$}
	[mor]
	(P0) edge node [above] {$T_{gh}(f)$}      (P1)
	(P0) edge node [left ] {$(\g\m_{g,h})_X$} (P2)
	(P1) edge node [right] {$(\g\m_{g,h})_Y$} (P3)
	(P2) edge node [above] {$T_gT_h(f)$}      (P3)
	(P2) edge node [left ] {$T_g(\mu)$}       (P4)
	(P3) edge node [right] {$T_g(\nu)$}       (P5)
	(P4) edge node [above] {$T_g(f')$}        (P5)
	;
	\end{tikzpicture}
\end{align} 
The commutativity of the outside perimeter of \cref{morph-T_g(cl-T_h(I))is-morph-cl-T_gh(I)} implies that $T_g(f')\in (\cl{T_{gh}(\cI)})^{(1)}$, and thus $(T_g(\cl{T_h(\cI)}))^{(1)} \linebreak[0] \sst (\cl{T_{gh}(\cI)})^{(1)}$ yielding $(\cl{T_g(\cl{T_h(\cI)})})^{(1)} \sst (\cl{T_{gh}(\cI)})^{(1)}$. A similar argument proves the converse inclusion.
\end{proof}

\section{Partial actions}

Given an action of $G$ on $\C$ as above and an ideal $\cI$ of $\C$, for any $g\in G$ we set

\begin{align}\label{D_g=I-cap-T_g(I)}
\Cg=\cI\cap\cl{T_g(\cI)}.
\end{align}
By \cref{closure-F(I)-ideal,intersect-ideal} the subcategory $\Cg$ is an ideal of $\C$ (and hence of $\cI$ as well). 

\begin{Lemma}\label{T_g(intesect)-in-intersect}
Let $\Cg$ be defined by \cref{D_g=I-cap-T_g(I)}. Then\footnote{Of course, \cref{T_g(D_(g-inv)-cap-D_h)-sst-D_g-cap-D_gh} immediately follows from \cref{cl-T_g(D_(g-inv)-cap-D_h)=D_g-cap-D_gh}, but it will be convenient to us to put it as a separate statement. Moreover, we use \cref{T_g(D_(g-inv)-cap-D_h)-sst-D_g-cap-D_gh} in the proof of \cref{cl-T_g(D_(g-inv)-cap-D_h)=D_g-cap-D_gh}.}
\begin{enumerate}
	\item $\C_{e}=\cI$;\label{D_1-is-I}
	\item $T_g(\Cgi\cap\Ch)\sst\Cg\cap\Cgh$ for all $g,h\in G$;\label{T_g(D_(g-inv)-cap-D_h)-sst-D_g-cap-D_gh}
	\item $\cl{T_g(\Cgi\cap\Ch)}=\Cg\cap\Cgh$ for all $g,h\in G$.\label{cl-T_g(D_(g-inv)-cap-D_h)=D_g-cap-D_gh}
\end{enumerate}
\end{Lemma}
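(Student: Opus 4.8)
The plan is to derive all three statements from \cref{closure-T_g(I)} together with the inverse-pair relations of the action. Statement \cref{D_1-is-I} is immediate: by definition $\C_e=\cI\cap\cl{T_e(\cI)}$, and $\cl{T_e(\cI)}=\cI$ by \cref{cl-T_1(I)=I}, so $\C_e=\cI\cap\cI=\cI$.

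For \cref{T_g(D_(g-inv)-cap-D_h)-sst-D_g-cap-D_gh} I would expand the two intersections as
\[
\Cgi\cap\Ch=\cI\cap\cl{\Tgi(\cI)}\cap\cl{\Th(\cI)},\qquad\Cg\cap\Cgh=\cI\cap\cl{\Tg(\cI)}\cap\cl{\Tgh(\cI)},
\]
and check, separately on objects and on morphisms, that $\Tg$ carries each factor on the left into the corresponding factor on the right. For an $X$ (or $f$) lying in all three classes on the left: membership in $\cI$ yields $\Tg(X)\in\cl{\Tg(\cI)}$ at once; membership in $\cl{\Th(\cI)}$ yields, by functoriality, $\Tg(X)\in\Tg(\cl{\Th(\cI)})\sst\cl{\Tg(\cl{\Th(\cI)})}=\cl{\Tgh(\cI)}$ via \cref{cl-T_g(cl-T_h(I))=cl-T_gh(I)}; and membership in $\cl{\Tgi(\cI)}$ yields likewise $\Tg(X)\in\cl{\Tg(\cl{\Tgi(\cI)})}=\cl{T_{gg\m}(\cI)}=\cl{T_e(\cI)}=\cI$, using \cref{cl-T_g(cl-T_h(I))=cl-T_gh(I)} with $h=g\m$ and then \cref{cl-T_1(I)=I}. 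As each target class is closed under isomorphisms, these three memberships say precisely that $\Tg(X)\in\Cg\cap\Cgh$, which is \cref{T_g(D_(g-inv)-cap-D_h)-sst-D_g-cap-D_gh}.

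For \cref{cl-T_g(D_(g-inv)-cap-D_h)=D_g-cap-D_gh}, the inclusion $\cl{\Tg(\Cgi\cap\Ch)}\sst\Cg\cap\Cgh$ is immediate from \cref{T_g(D_(g-inv)-cap-D_h)-sst-D_g-cap-D_gh}, since $\Cg\cap\Cgh$ is an ideal (\cref{closure-F(I)-ideal,intersect-ideal}) and hence closed under isomorphisms, so it already contains the isomorphism closure of $\Tg(\Cgi\cap\Ch)$. For the reverse inclusion I would feed the already proved \cref{T_g(D_(g-inv)-cap-D_h)-sst-D_g-cap-D_gh} the indices $g\m$ and $gh$ in place of $g$ and $h$; since $g\m\cdot gh=h$ this reads
\[
\Tgi(\Cg\cap\Cgh)\sst\Cgi\cap\Ch.
\]
Applying $\Tg$ and passing to closures gives $\cl{\Tg(\Tgi(\Cg\cap\Cgh))}\sst\cl{\Tg(\Cgi\cap\Ch)}$. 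It remains to observe that the natural isomorphism $u\m\g_{g,g\m}\colon\Tg\Tgi\iiso\Id_\C$ of \cref{u-inv-gamma_(g_g-inv)} places $\Cg\cap\Cgh$ inside $\cl{\Tg(\Tgi(\Cg\cap\Cgh))}$: each object $Z$ of $\Cg\cap\Cgh$ is isomorphic to $\Tg\Tgi(Z)$, and each morphism of $\Cg\cap\Cgh$ arises, through the naturality square of $u\m\g_{g,g\m}$, as a morphism of the closure in the sense of \cref{morphism-in-cl-D}. Hence $\Cg\cap\Cgh\sst\cl{\Tg(\Tgi(\Cg\cap\Cgh))}\sst\cl{\Tg(\Cgi\cap\Ch)}$, which together with the first inclusion gives the desired equality.

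The only step demanding genuine care is the morphism level, where one must keep track of the description \cref{morphism-in-cl-D} of the hom-sets of an isomorphism closure and of the naturality of $\g_{g,h}$ and $u$, exactly as in the proof of \cref{closure-T_g(I)}; the object-level claims reduce to a routine chase of the isomorphisms supplied by $\g$ and $u$. I expect no difficulty beyond this bookkeeping, since both nontrivial inclusions have been reduced to \cref{closure-T_g(I)} and the relation \cref{u-inv-gamma_(g_g-inv)}.
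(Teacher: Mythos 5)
Your proposal is correct and follows essentially the same route as the paper's proof: item \cref{D_1-is-I} directly from \cref{cl-T_1(I)=I} of \cref{closure-T_g(I)}, item \cref{T_g(D_(g-inv)-cap-D_h)-sst-D_g-cap-D_gh} by pushing each factor of $\cI\cap\cl{T_{g\m}(\cI)}\cap\cl{T_h(\cI)}$ through $T_g$ using both parts of \cref{closure-T_g(I)}, and item \cref{cl-T_g(D_(g-inv)-cap-D_h)=D_g-cap-D_gh} by substituting $(g\m,gh)$ for $(g,h)$ in the already-proved inclusion, applying $T_g$ and taking closures. The only (cosmetic) divergence is the final identification in \cref{cl-T_g(D_(g-inv)-cap-D_h)=D_g-cap-D_gh}: the paper invokes \cref{closure-T_g(I)} for the ideal $\Cg\cap\Cgh$ to collapse $\cl{T_g(\cl{T_{g\m}(\Cg\cap\Cgh)})}$ to $\Cg\cap\Cgh$, whereas you re-derive exactly that containment by hand from the natural isomorphism \cref{u-inv-gamma_(g_g-inv)} and the description \cref{morphism-in-cl-D}, which amounts to inlining the proof of that lemma.
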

\begin{proof}
Item \cref{D_1-is-I} immediately follows from \cref{cl-T_1(I)=I} of \cref{closure-T_g(I)}.

Using both items of \cref{closure-T_g(I)}, we get
\begin{align*}
	T_g(\Cgi\cap\Ch)=T_g(\cI\cap\cl{T_{g\m}(\cI)}\cap\cl{T_h(\cI)})\sst T_g(\cI)\cap \cl{T_e(\cI)}\cap\cl{T_{gh}(\cI)}\sst\Cg\cap\Cgh,
\end{align*}
proving \cref{T_g(D_(g-inv)-cap-D_h)-sst-D_g-cap-D_gh}.

Item \cref{T_g(D_(g-inv)-cap-D_h)-sst-D_g-cap-D_gh} yields
\begin{align}\label{cl-T_g(D_(g-inv)-cap-D_h)-sst-D_g-cap-D_gh}
	\cl{T_g(\Cgi\cap\Ch)}\sst\Cg\cap\Cgh.
\end{align}
Writing \cref{cl-T_g(D_(g-inv)-cap-D_h)-sst-D_g-cap-D_gh} for $g'=g\m$ and $h'=gh$, we obtain 
\begin{align}\label{cl-T_(g-inv)(D_g-cap-D_gh)-sst-D_(g-inv)-cap-D_h}
	\cl{T_{g\m}(\Cg\cap\Cgh)}\sst\Cgi\cap\Ch.
\end{align}
Applying $T_g$ to the both sides of \cref{cl-T_(g-inv)(D_g-cap-D_gh)-sst-D_(g-inv)-cap-D_h}, we come to
\begin{align}\label{T_g(cl-T_(g-inv)(D_g-cap-D_gh)-sst-T_g(D_(g-inv)-cap-D_h))}
	T_g(\cl{T_{g\m}(\Cg\cap\Cgh)})\sst T_g(\Cgi\cap\Ch).
\end{align}
Taking the closure of the both sides of \cref{T_g(cl-T_(g-inv)(D_g-cap-D_gh)-sst-T_g(D_(g-inv)-cap-D_h))} and using \cref{closure-T_g(I)} we get the converse inclusion of \cref{cl-T_g(D_(g-inv)-cap-D_h)-sst-D_g-cap-D_gh}, and hence \cref{cl-T_g(D_(g-inv)-cap-D_h)=D_g-cap-D_gh} holds.
\end{proof}

\begin{Corollary}\label{T_g(D_(g-inv))-sst-D_g}
Under the conditions of \cref{T_g(intesect)-in-intersect} one has $\cl{T_g(\Cgi)}=\Cg$.
\end{Corollary}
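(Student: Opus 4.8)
The plan is to deduce the equality directly from \cref{cl-T_g(D_(g-inv)-cap-D_h)=D_g-cap-D_gh} of \cref{T_g(intesect)-in-intersect}, specializing the parameter $h$ to the identity element $e$ of $G$. With this choice, the left-hand side of that identity becomes $\cl{T_g(\Cgi\cap\C_e)}$ and the right-hand side becomes $\Cg\cap\C_{ge}$, so the whole argument reduces to simplifying each side.

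First I would handle the right-hand side. By \cref{D_1-is-I} of the same lemma we have $\C_e=\cI$, and since $ge=g$ in $G$ we have $\C_{ge}=\Cg$; hence $\Cg\cap\C_{ge}=\Cg\cap\Cg=\Cg$. Next I would simplify the argument of $T_g$ on the left-hand side. From the defining formula \cref{D_g=I-cap-T_g(I)} we read off $\Cgi=\cI\cap\cl{T_{g\m}(\cI)}\sst\cI=\C_e$, so that the intersection $\Cgi\cap\C_e$ is simply $\Cgi$. Substituting these two simplifications into the specialized identity yields $\cl{T_g(\Cgi)}=\Cg$, which is exactly the asserted equality.

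There is essentially no obstacle to overcome here: the statement is a genuine corollary, obtained by plugging $h=e$ into the previously established \cref{T_g(intesect)-in-intersect}, and the only facts that need to be invoked — namely that $\C_e=\cI$, that $ge=g$, and that $\Cgi$ is already contained in $\cI$ — are all immediate from the definitions and the preceding lemma. Accordingly, I would expect the written proof to consist of a single displayed chain of equalities rather than any substantial new computation.
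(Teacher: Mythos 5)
Your proof is correct and follows exactly the paper's own argument: the paper likewise writes $\cl{T_g(\Cgi)}=\cl{T_g(\Cgi\cap\C_{e})}=\Cg\cap\C_{g\cdot e}=\Cg$, i.e.\ it specializes item \cref{cl-T_g(D_(g-inv)-cap-D_h)=D_g-cap-D_gh} of \cref{T_g(intesect)-in-intersect} to $h=e$ and uses $\C_e=\cI$ together with $\Cgi\sst\cI$. No differences worth noting.
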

\noindent For $\cl{T_g(\Cgi)}=\cl{T_g(\Cgi\cap\C_{e})}=\Cg\cap\C_{g\cdot e}=\Cg$ by \cref{cl-T_g(D_(g-inv)-cap-D_h)=D_g-cap-D_gh} of \cref{T_g(intesect)-in-intersect}.

\begin{Remark}\label{gamma-g-h}
Denote by $S_g$ the restriction of $T_g$ to $\Cgi$. Under the conditions of \cref{T_g(intesect)-in-intersect}, there is an isomorphism (of the restrictions) $\g_{g,h}:S_gS_h\iiso S_{gh}$ on $\Chi\cap\C_{h\m g\m}$.
\end{Remark}
\noindent Indeed, $S_h$ is defined on $\Chi\cap\C_{h\m g\m}\sst\Chi$ and coincides with $T_h$ on this ideal. Moreover, $S_h(\Chi\cap\C_{h\m g\m})\sst\Ch\cap\Cgi$ by \cref{T_g(D_(g-inv)-cap-D_h)-sst-D_g-cap-D_gh} of \cref{T_g(intesect)-in-intersect}. Hence, $S_gS_h$ is defined on $\Chi\cap\C_{h\m g\m}$ and coincides with $T_gT_h$. On the other hand, $\Chi\cap\C_{h\m g\m}\sst\C_{h\m g\m}$, so $S_{gh}$ is defined on $\Chi\cap\C_{h\m g\m}\sst\C_{h\m g\m}$ and coincides with $T_{gh}$. Since $\g_{g,h}:T_gT_h\iiso T_{gh}$, then, in particular, $\g_{g,h}:S_gS_h\iiso S_{gh}$.

This motivates us to give the following definition.
\begin{Definition}\label{pact-defn}
A \emph{partial action} of a group $G$ on a semigroupal category $\C$ is a triple $T=(\{T_g\}_{g\in G}, \linebreak[0] \{\g_{g,h}\}_{g,h\in G}, \linebreak[0] u)$, where 
\begin{enumerate}
	\item\label{T_g-semigr-equiv} $T_g:\Cgi\to\Cg$ is a semigroupal equivalence between ideals of $\C$,
	\item $\C_{e}=\C$ and $u:\Id_\C\iiso T_e$ is a natural isomorphism between semigroupal functors,
	\item The restriction of $T_g$ to $\Cgi\cap\Ch$ is a semigroupal equivalence between $\Cgi\cap\Ch$ and $\Cg\cap\Cgh$,
	\item $\g_{g,h}:T_gT_h\iiso T_{gh}$ is a natural isomorphism of functors from $\Chi\cap\C_{h\m g\m}$ to $\C_g \cap \C_{gh}$,
\end{enumerate}
such that \cref{square-T-gamma} commutes for $X\in\C_{k\m}\cap\C_{k\m h\m}\cap\C_{k\m h\m g\m}$, and the morphisms \cref{(gamma_g1)_X-u_(T_g(X)),(gamma_1g)_X-T_g(u_X)} are mutually inverse for $X\in\Cgi$. When $\Cg=\C$ for all $g\in G$, we say that $T$ is \emph{global}.

Item \cref{T_g-semigr-equiv} of \cref{pact-defn} means that $T_g = (T_g, J^g)$ is a semigroupal functor satisfying, for any $X, Y \in \Chi \cap \Cghi$,
\begin{equation}\label{pent-gamma-jota}
	\begin{tikzpicture}[xscale=1.5]
	\path
	node (P1) at ( 0,2) {$T_g T_h(X) \otimes T_g T_h(Y)$}
	node (P2) at (-1,1) {$T_g (T_h(X) \otimes T_h(Y))$}
	node (P3) at (-1,0) {$T_g T_h(X \otimes Y)$}
	node (P4) at ( 1,0) {$T_{gh}(X \otimes Y)$.}
	node (P5) at ( 1,1) {$T_{gh}(X) \otimes T_{gh}(Y)$}
	[mor]
	(P1) edge node [left ] {$J^g_{T_h(X), T_h(Y)}$} (P2)
	(P2) edge node [left ] {$T_g J^h_{X, Y}$} (P3)
	(P3) edge node [below] {$(\g_{g,h})_{X \otimes Y}$} (P4)
	(P1) edge node [right] {$(\gamma_{g,h})_X \otimes (\gamma_{g,h})_Y$} (P5)
	(P5) edge node [right] {$J^{gh}_{X, Y}$} (P4)
	;
	\end{tikzpicture}
\end{equation}
The natural isomorphism $u:\Id_\C\iiso T_e$ being a natural isomorphism of semigroupal functors implies that, for any $X, Y \in \C$, we must have
\begin{equation}\label{triangulo_u_jota}
	\begin{tikzpicture}
	\path
	node (P1) at (-1,1) {$X \otimes Y$}
	node (P2) at ( 0,0) {$T_e (X \otimes Y)$.}
	node (P3) at ( 1,1) {$T_e(X) \otimes T_e(Y)$}
	[mor]
	(P1) edge node[left ] {$u_{X \otimes Y}$} (P2)
	(P3) edge node[right] {$J^e_{X, Y}$}      (P2)
	(P1) edge node[above] {$u_X \otimes u_Y$} (P3)
	;
	\end{tikzpicture}
\end{equation}
\end{Definition}

\begin{Remark}\label{partialcentralidempotents}
One particular class of partial $G$-actions on a semigroupal category $\C$ is given by categorical ideals $\C_g =\overline{\1_g \otimes \C}$, where $\1_g$ are central idempotent objects in the category $\C$. In this case, the category $\C$ itself is a monoidal category with the unit object being $\1 =\1_e$. Moreover, the functors $T_g :\C_{g^{-1}} \rightarrow \C_g$ are monoidal in the sense that for each $g\in G$ there is an isomorphism in the category $\C_g$, $\varphi^{g} :\1_g \rightarrow T_g (\1_{g^{-1}})$, such that, for each $X\in \C_{g^{-1}}$ the following square commutes
\begin{equation}\label{phi_T_g_jota}
	\begin{tikzpicture}[xscale=2.5]
	\path
	node (P1) at (0,1) {\(T_g(X)\)}
	node (P2) at (1,1) {\(\1_g \otimes T_g( X )\)}
	node (P3) at (0,0) {\(T_g( \1_{g^{-1}} \otimes X )\)}
	node (P4) at (1,0) {{\(T_g( \1_{g^{-1}} ) \otimes T_g( X )\) .}}
	[mor]
	(P1)
	edge node [above] {\Iso} (P2)
	edge node [left ] {\Iso} (P3)
	(P2) edge node [right] {\(\varphi^g \otimes T_g ( X )\)} (P4)
	(P3) edge node [above] {\( (J^g)^{-1}_{\1_{g^{-1}},X}\)} (P4)
	;
	\end{tikzpicture}
\end{equation}
\end{Remark}

\begin{Lemma}\label{lemadasunidades}
Let $G$ be a group, $\C$ be a monoidal category and $T$ be a partial action of $G$ on $\C$ defined by a family of central idempotent objects as explained in \cref{partialcentralidempotents}. Then for each $g,h\in G$ we have an isomorphism $\varphi^g_{gh} :\1_g \otimes \1_{gh} \rightarrow T_g (\1_{g^{-1}}\otimes \1_h)$.
\end{Lemma}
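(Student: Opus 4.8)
The plan is to realise $\varphi^g_{gh}$ as the canonical comparison between the monoidal unit of $\C_g\cap\C_{gh}$ and the image under $T_g$ of the monoidal unit of $\C_{g^{-1}}\cap\C_h$, so the first task is to identify these two units explicitly. First I would check that
\[
\C_{g^{-1}}\cap\C_h=\cl{(\1_{g^{-1}}\otimes\1_h)\otimes\C},\qquad \C_g\cap\C_{gh}=\cl{(\1_g\otimes\1_{gh})\otimes\C}.
\]
For the inclusion $\supseteq$ one notes that $(\1_g\otimes\1_{gh})\otimes C$ lies in $\C_g$ trivially and in $\C_{gh}$ after transposing the two idempotent factors with the exchange isomorphism $\sigma$; for $\subseteq$, if $X\in\C_g\cap\C_{gh}$ then $X\Iso\1_g\otimes A$ for some $A$, and applying the isomorphism $L^{\1_{gh}}_X\colon\1_{gh}\otimes X\to X$ coming from the theorem that establishes $\cl{e\otimes\C}$ to be monoidal with unit $e$, one gets $X\Iso\1_{gh}\otimes\1_g\otimes A\Iso\1_g\otimes\1_{gh}\otimes A$, again via $\sigma$. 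The same computation applies to $\C_{g^{-1}}\cap\C_h$.

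Next I would verify that $\1_g\otimes\1_{gh}$, and likewise $\1_{g^{-1}}\otimes\1_h$, is itself a central idempotent, so that the same theorem endows $\C_g\cap\C_{gh}$ (resp. $\C_{g^{-1}}\cap\C_h$) with a monoidal structure whose unit is exactly $\1_g\otimes\1_{gh}$ (resp. $\1_{g^{-1}}\otimes\1_h$). The candidate fusion isomorphism on $\1_g\otimes\1_{gh}$ is assembled from $\Phi_{\1_g}$, $\Phi_{\1_{gh}}$ and the exchange $\sigma$ reordering the four tensor factors, while the candidate exchange isomorphism $\sigma^{\1_g\otimes\1_{gh}}$ is the composite of $\sigma^{\1_g}$ and $\sigma^{\1_{gh}}$ applied in turn. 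I expect the verification that these satisfy \eqref{idempotent1}--\eqref{idempotent4} to be the main and most laborious obstacle: it is a purely diagrammatic check that the naturality and coherence recorded for the two given idempotents propagate to their tensor product.

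With both units in hand, the final step is short. By the third item of \cref{pact-defn} together with \cref{cl-T_g(D_(g-inv)-cap-D_h)=D_g-cap-D_gh} of \cref{T_g(intesect)-in-intersect}, the functor $T_g$ restricts to a semigroupal equivalence $\C_{g^{-1}}\cap\C_h\to\C_g\cap\C_{gh}$. Writing $U=\1_{g^{-1}}\otimes\1_h$, the natural isomorphisms $U\otimes\esp\iiso\Id$ and $\esp\otimes U\iiso\Id$ of the source transport through this equivalence: for $Y\in\C_g\cap\C_{gh}$ choose $X$ with $T_g(X)\Iso Y$ by essential surjectivity, whence $T_g(U)\otimes Y\Iso T_g(U)\otimes T_g(X)\Iso T_g(U\otimes X)\Iso T_g(X)\Iso Y$ naturally, using the structure isomorphism $J^g$, and symmetrically on the right. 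Thus $T_g(U)$ is a two-sided unit of $\C_g\cap\C_{gh}$, and by uniqueness of the monoidal unit up to a unique isomorphism (available in the strict setting we have adopted) it is canonically isomorphic to $\1_g\otimes\1_{gh}$; I would define $\varphi^g_{gh}\colon\1_g\otimes\1_{gh}\to T_g(\1_{g^{-1}}\otimes\1_h)$ to be this isomorphism. For $h=e$ this recovers, up to the relevant fusion and unit isomorphisms, the comparison $\varphi^g$ of \cref{partialcentralidempotents}, so the construction is consistent with the data already present.
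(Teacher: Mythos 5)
Your proposal is correct and follows essentially the same route as the paper's proof: both restrict $T_g$ to the equivalence $\C_{g^{-1}}\cap\C_h\to\C_g\cap\C_{gh}$, use essential surjectivity together with the structure isomorphism $J^g$ to show that $T_g(\1_{g^{-1}}\otimes\1_h)$ absorbs every object of $\C_g\cap\C_{gh}$ under tensoring, and then identify it with $\1_g\otimes\1_{gh}$ by the standard uniqueness-of-units comparison (the paper writes this comparison out explicitly as $\1_g\otimes\1_{gh}\cong\1_g\otimes\1_{gh}\otimes T_g(\1_{g^{-1}}\otimes\1_h)\cong T_g(\1_{g^{-1}}\otimes\1_h)$). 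The only difference is that you also justify the paper's tacit claim that $\1_g\otimes\1_{gh}$ is the monoidal unit of $\C_g\cap\C_{gh}$, by identifying this intersection with $\cl{(\1_g\otimes\1_{gh})\otimes\C}$ and noting that the tensor product of the two central idempotents is again a central idempotent.
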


\begin{proof}
First, take an arbitrary object $X\in \C_{g}\cap \C_{gh}$. As the functor $T_g|_{\C_{g^{-1}}\cap \C_h}$ defines a categorical equivalence between $\C_{g^{-1}}\cap \C_h$ and $\C_{g}\cap \C_{gh}$, then there is an object $Y\in \C_{g^{-1}}\cap \C_h$ such that $X\cong T_g (Y)$. Note that $Y\cong Y\otimes \1_{g^{-1}}$ and $Y\cong Y\otimes \1_h$. Therefore
\begin{align*}
	X\otimes T_g (\1_{g^{-1}}\otimes \1_h)
	& \cong T_g (Y)\otimes T_g (\1_{g^{-1}}\otimes \1_h)  \\
	& \cong T_g (Y \otimes \1_{g^{-1}}\otimes \1_h )\\
	& \cong T_g (Y) \cong X.
\end{align*}
Taking, in particular, $X=\1_g \otimes \1_{gh}$, we have
\[
\1_g \otimes \1_{gh} \cong \1_g \otimes \1_{gh}\otimes T_g (\1_{g^{-1}}\otimes \1_h) \cong T_g (\1_{g^{-1}}\otimes \1_h) 
\] 
in which the last isomorphism is given by the fact that $\1_g \otimes \1_{gh}$ is the monoidal unit of $\C_g \cap \C_{gh}$.
\end{proof}

More generally, $\forall i\in \{ 1, \ldots , n\}$, we have
\[
\1_{g_1} \otimes \1_{g_2} \otimes \cdots \otimes \1_{g_n} \cong T_{g_i} (\1_{g_i^{-1}g_1} \otimes \cdots \1_{g_i^{-1} g_{i-1}} \otimes \1_{g_i^{-1}}\otimes \1_{g_i^{-1} g_{i+1}} \otimes \cdots  \otimes \1_{g_i^{-1} g_n}) .
\]
For each $i\in \{ 1, \ldots , n\}$, denote these isomorphisms by $\varphi^{g_i}_{g_1, \ldots, g_{i-1}, g_{i+1}, \ldots , g_n}$.

\begin{Lemma}\label{composicaodosphi}
For elements $g,h\in G$, the following diagram commutes:\\

\centerline{\xymatrix{\1_g \otimes \1_{gh} \ar[rr]^{\varphi^{gh}_g}\ar[dd]_{\varphi^g_{gh}} & & T_{gh}(\1_{h^{-1}} \otimes \1_{(gh)^{-1}}) \\
		& & \\
		T_g (\1_{g^{-1}} \otimes \1_h ) 
		\ar[rr]^{T_g \left(\varphi^h_{g^{-1}}\right)} & & T_g(T_h(\1_{(gh)^{-1}} \otimes \1_{h^{-1}} )) .\ar[uu]_{(\gamma_{g,h})_{\1_{h^{-1}}\otimes \1_{(gh)^{-1}}}} }}

\end{Lemma}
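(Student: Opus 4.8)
The plan is to read the lower-and-right route of the square as a \emph{transported monoidal structure} on $T_{gh}$ and to reduce the asserted identity to the single compatibility \cref{pent-gamma-jota} that is already built into the definition of a partial action. Throughout I would work inside the monoidal ideal $\C_g\cap\C_{gh}$, whose unit is $\1_g\otimes\1_{gh}$ (as in the proof of \cref{lemadasunidades}).

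First I would fix the bookkeeping. By the third item of \cref{pact-defn} the functors restrict to monoidal equivalences
\begin{align*}
	T_g&\colon\C_{g^{-1}}\cap\C_h\to\C_g\cap\C_{gh},\\
	T_h&\colon\C_{h^{-1}}\cap\C_{(gh)^{-1}}\to\C_{g^{-1}}\cap\C_h,\\
	T_{gh}&\colon\C_{(gh)^{-1}}\cap\C_{h^{-1}}\to\C_g\cap\C_{gh},
\end{align*}
and the three maps $\varphi^g_{gh}$, $\varphi^h_{g^{-1}}$ and $\varphi^{gh}_g$ are, by construction in \cref{lemadasunidades}, the unit comparisons of these (restricted) equivalences, each sending the unit of the codomain to the image of the unit of the domain. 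In particular the four vertices are all unit objects of $\C_g\cap\C_{gh}$. The only nuisance is that the idempotent factors occur in opposite orders at some vertices (e.g. $\1_{(gh)^{-1}}\otimes\1_{h^{-1}}$ versus $\1_{h^{-1}}\otimes\1_{(gh)^{-1}}$); I would match them using the exchange isomorphism $\sigma$ of \cref{idempotentecentral}, the idempotent coherence diagrams \cref{idempotent3,idempotent4} guaranteeing that this reordering is harmless.

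The heart of the argument is then the following. The composite $T_g(\varphi^h_{g^{-1}})\circ\varphi^g_{gh}$ is exactly the unit comparison of the composite monoidal functor $T_gT_h$, and postcomposing with $(\g_{g,h})_{\1_{h^{-1}}\otimes\1_{(gh)^{-1}}}$ transports this, along the natural isomorphism $\g_{g,h}\colon T_gT_h\iiso T_{gh}$, to a unit comparison of $T_{gh}$. Thus both the lower route and the top edge $\varphi^{gh}_g$ are unit comparisons for monoidal structures on the single functor $T_{gh}$. These two monoidal structures share the \emph{same} tensor comparison: for the transported one it is the $\g_{g,h}$-conjugate of the composite of $J^g$ and $T_g(J^h)$, and the assertion that this equals $J^{gh}$ is precisely \cref{pent-gamma-jota}. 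Since, in the idempotent setting, the unit comparison of a monoidal functor is pinned down by its tensor comparison through the unit-coherence square \cref{phi_T_g_jota} (this is how $\varphi^g$, and hence the derived $\varphi^g_{gh}$ of \cref{lemadasunidades}, are characterised), the two unit comparisons of $T_{gh}$ must agree, and this agreement is exactly the commutativity of the square.

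I expect the main obstacle to be making the phrase ``the unit comparison is pinned down by the tensor comparison'' precise in the present partial, non-strict-unit context: concretely, one must check that the composite and transported structures genuinely satisfy the unit-coherence square \cref{phi_T_g_jota}, and then extract $\varphi$ from $J$ by testing that square against the fusion isomorphism. All of this is a finite diagram chase driven by \cref{pent-gamma-jota}, the naturality of $\g_{g,h}$, and the idempotent axioms \cref{idempotent1,idempotent2,idempotent3,idempotent4}; no input beyond the partial-action data is needed.
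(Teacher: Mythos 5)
Your proposal is correct, and its skeleton matches the paper's: both proofs recognise $\varphi^{gh}_g$ and $T_g(\varphi^h_{g^{-1}})\circ\varphi^g_{gh}$ as the unit comparisons of the monoidal functors $T_{gh}$ and $T_gT_h$ between $\C_{h^{-1}}\cap\C_{(gh)^{-1}}$ and $\C_g\cap\C_{gh}$, and both reduce the square to the statement that $\gamma_{g,h}$ respects these unit comparisons. The difference lies in how that statement is justified. The paper simply declares that $\gamma_{g,h}$ ``is a monoidal natural transformation'' and reads the square as the triangle axiom \cref{tri-vf-eta}; but \cref{pact-defn} only imposes the semigroupal compatibility \cref{pent-gamma-jota} on $\gamma_{g,h}$ (the unit comparisons $\varphi^g$ are introduced only afterwards, in \cref{partialcentralidempotents} and \cref{lemadasunidades}), so unit-compatibility is precisely the point at issue. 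You supply the missing argument: transporting the composite monoidal structure on $T_gT_h$ along $\gamma_{g,h}$ yields a second monoidal structure on the functor $T_{gh}$ whose tensor comparison agrees with $J^{gh}$ by \cref{pent-gamma-jota}, and the uniqueness of the unit constraint of a strong monoidal functor given its tensor constraint (extracted from \cref{phi_T_g_jota} by cancelling the invertible $J$ and then cancelling tensoring with an object isomorphic to the unit) forces the two unit comparisons of $T_{gh}$ to coincide, which is the asserted square. So your route buys a self-contained proof from the stated axioms alone, at the price of the transport construction, the uniqueness lemma, and the exchange-isomorphism bookkeeping (which the paper's own diagram silently suppresses by treating $\1_{(gh)^{-1}}\otimes\1_{h^{-1}}$ and $\1_{h^{-1}}\otimes\1_{(gh)^{-1}}$ as interchangeable); the paper's version is shorter but takes the monoidality of $\gamma_{g,h}$ for granted, which is exactly the fact your argument establishes.
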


\begin{proof}
This comes from the fact both functors, $T_{gh}$ and $T_g T_h$ are monoidal functors between the monoidal categories $\mathcal{C}_{h^{-1}} \cap \mathcal{C}_{(gh)^{-1}}$ and $\mathcal{C}_g \cap \mathcal{C}_{gh}$. The isomorphisms $\varphi^{gh}_g$ and $T_g (\varphi^{h}_{g^{-1}}) \circ \varphi^{g}_{gh}$ are, respectively, the isomorphisms relating the monoidal units for the functors $T_{gh}$ and $T_g T_h$. As the natural transformation $\gamma_{g,h} :T_g T_h \Rightarrow T_{gh}$ is a monoidal natural transformation, the square diagram is nothing but the triangle diagram from the definition of a monoidal natural transformation.
\end{proof}

\begin{Definition}\label{morph-pact-defn}
Let $T=(\{T_g\}_{g\in G},\{\g_{g,h}\}_{g,h\in G},u)$ and $T'=(\{T'_g\}_{g\in G},\{\g'_{g,h}\}_{g,h\in G},u')$ be partial actions of a group $G$ on semigroupal (resp. monoidal) categories $\C$ and $\C'$, respectively. A \emph{morphism of partial actions} $F: T\to T'$ is a pair $(F,\{\tau_g\}_{g\in G})$, where
\begin{enumerate}
	\item $F:\C\to\C'$ is a semigroupal functor,
	\item For each $g\in G$, $\tau_g:FT_g\iiso T'_gF$ is natural isomorphisms of functors between $\C_{g^{-1}}$ and $\C'_{g}$,
\end{enumerate}
such that the following diagram commutes 

\begin{equation}\label{pent-tau-gamma}
	\begin{tikzpicture}
	\path
	node (P0) at ( 0, 0) {$FT_gT_h(X)$}
	node (P1) at (-1,-1) {$FT_{gh}(X)$}
	node (P2) at (-1,-2) {$T'_{gh}F(X)$}
	node (P3) at ( 1,-2) {$T'_gT'_hF(X)$}
	node (P4) at ( 1,-1) {$T'_gFT_h(X)$}
	;
	\path[->,font=\scriptsize]
	(P0) edge node[left] {$F((\g_{g,h})_X)$} (P1)
	(P1) edge node[left] {$(\tau_{gh})_X$} (P2)
	(P3) edge node[above] {$(\g'_{g,h})_{F(X)}$} (P2)
	(P4) edge node[right] {$T'_g((\tau_h)_X)$} (P3)
	(P0) edge node[right] {$(\tau_g)_{T_h(X)}$} (P4)
	;
	\end{tikzpicture}
\end{equation}
for all $X\in \C_{h^{-1}} \cap \C_{{gh}^{-1}}$.
\end{Definition}

\begin{Proposition}
Let $T=(\{T_g\}_{g\in G},\{\g_{g,h}\}_{g,h\in G},u)$ and $T'=(\{T'_g\}_{g\in G},\{\g'_{g,h}\}_{g,h\in G},u')$ be partial actions of a group $G$ on semigroupal categories $\C$ and $\C'$, respectively and $F:T\rightarrow T'$ be a morphism of partial actions. Then, for all $X\in\C$, we have 
\begin{equation}\label{tri-tau-u}
	\begin{tikzpicture}
	\path
	node (P0) at (1,1) {$FT_e(X)$}
	node (P1) at (-1,1) {$F(X)$}
	node (P2) at (0,0) {$T'_eF(X)$}
	;
	\path[->,font=\scriptsize]
	(P1) edge node[above] {$F(u_X)$} (P0)
	(P1) edge node[left ] {$u'_{F(X)}$} (P2)
	(P0) edge node[right] {$(\tau_e)_X$} (P2)
	;
	\end{tikzpicture}
\end{equation}
for all $X\in\C$.
\end{Proposition}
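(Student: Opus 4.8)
The plan is to obtain \cref{tri-tau-u} as the $g=h=e$ specialisation of the coherence pentagon \cref{pent-tau-gamma}. The crucial observation is that when $g=h=e$ the standing hypothesis $X\in\C_{h^{-1}}\cap\C_{(gh)^{-1}}$ collapses to $X\in\C_e\cap\C_e=\C$, so the identity produced will hold for \emph{every} object of $\C$, which is exactly the range demanded by \cref{tri-tau-u}. Transcribing \cref{pent-tau-gamma} at $g=h=e$ (both paths from $FT_eT_e(X)$ to $T'_eF(X)$) gives the commutativity
\[
(\tau_e)_X\circ F((\g_{e,e})_X)=(\g'_{e,e})_{F(X)}\circ T'_e((\tau_e)_X)\circ(\tau_e)_{T_e(X)}.
\]

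First I would eliminate the structure isomorphisms $\g_{e,e}$ and $\g'_{e,e}$ using the unit axioms of the two partial actions. Taking $g=e$ in the mutually inverse pair \cref{(gamma_1g)_X-T_g(u_X)} yields $(\g_{e,e})_X=(T_e(u_X))^{-1}$, and the same relation applied to $T'$ with $X$ replaced by $F(X)$ gives $(\g'_{e,e})_{F(X)}=(T'_e(u'_{F(X)}))^{-1}$. Substituting both, applying $F$ to the first (so $F((\g_{e,e})_X)=(FT_e(u_X))^{-1}$), and moving $T'_e(u'_{F(X)})$ to the left-hand side, the pentagon identity becomes
\[
T'_e(u'_{F(X)})\circ(\tau_e)_X\circ(FT_e(u_X))^{-1}=T'_e((\tau_e)_X)\circ(\tau_e)_{T_e(X)}.
\]

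The next step is to remove the shifted component $(\tau_e)_{T_e(X)}$ by invoking naturality of $\tau_e\colon FT_e\iiso T'_eF$ at the morphism $u_X\colon X\to T_e(X)$ of $\C$, which is legitimate since $\tau_e$ is defined on $\C_{e^{-1}}=\C$. This gives $(\tau_e)_{T_e(X)}=T'_e(F(u_X))\circ(\tau_e)_X\circ(FT_e(u_X))^{-1}$. After inserting this, both sides carry $(\tau_e)_X$ and a trailing $(FT_e(u_X))^{-1}$, each of which is an isomorphism and so may be cancelled, leaving $T'_e(u'_{F(X)})=T'_e((\tau_e)_X)\circ T'_e(F(u_X))=T'_e((\tau_e)_X\circ F(u_X))$. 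Finally, since $T'_e$ is a semigroupal equivalence and hence faithful, I cancel $T'_e$ to conclude $u'_{F(X)}=(\tau_e)_X\circ F(u_X)$, which is precisely \cref{tri-tau-u}.

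I expect the difficulty to be bookkeeping rather than conceptual. The one genuine choice is to invoke \cref{(gamma_1g)_X-T_g(u_X)} rather than \cref{(gamma_g1)_X-u_(T_g(X))}, so that $\g_{e,e}$ is expressed through $T_e(u_{\esp})$ in the form that meshes with the $T'_e(\cdots)$ factors; using the other axiom would leave mismatched $u_{T_e(\esp)}$ terms. Beyond that, one must carefully respect the direction of every arrow when reading off \cref{pent-tau-gamma}, and verify that each cancelled morphism ($(\tau_e)_X$, $FT_e(u_X)$, and at the end $T'_e$ via faithfulness) really is invertible, respectively faithful, in the relevant ideal; because $\C_e=\C$ all objects $T_e(X)$, $F(X)$ lie in the correct domains, so no ideal-membership obstruction arises.
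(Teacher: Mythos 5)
Your proposal is correct and takes essentially the same route as the paper: both specialize the pentagon \cref{pent-tau-gamma} to $g=h=e$, use the unit axiom \cref{(gamma_1g)_X-T_g(u_X)} to identify $(\gamma_{e,e})_X$ and $(\gamma'_{e,e})_{F(X)}$ with $(T_e(u_X))^{-1}$ and $(T'_e(u'_{F(X)}))^{-1}$, and apply naturality of $\tau_e$ at $u_X$, arriving at $T'_e(u'_{F(X)})=T'_e((\tau_e)_X\circ F(u_X))$. The only cosmetic difference is the final cancellation: you strip off $T'_e$ by faithfulness (legitimate, since $T'_e$ is a semigroupal equivalence, or alternatively since $T'_e\cong\Id_{\C'}$), whereas the paper achieves the same descent via a second diagram chase using naturality of $u'$ and invertibility of $u'_{T'_eF(X)}$.
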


\begin{proof}
Take diagram \cref{pent-tau-gamma} with $g=h=e$ and construct the following diagram:

\begin{equation}\label{varpentagono}
	\begin{tikzpicture}[xscale=2.25]
	\path
	node (1) at (1, 2) {\(F T_e ( X )\)}
	node (2) at (3, 2) {\(T'_e F ( X )\)}
	node (3) at (1, 1) {\(F T_e T_e ( X )\)}
	node (4) at (2, 1) {\(T'_e F T_e ( X )\)}
	node (5) at (3, 1) {\(T'_e T'_e F ( X )\)}
	node (6) at (1, 0) {\(FT_e ( X )\)}
	node (7) at (3, 0) {\(T'_e F ( X )\)}
	[mor]
	(1)
	edge node [above] {\((\tau_e)_X\)} (2)
	edge node [right] {\(F T_e(u_X)\)} (3)
	(2)
	edge node [left ] {\(T'_e F(u_X)\)} (4)
	edge node [right] {\(T'_e u'_{F(X)}\)} (5)
	(3)
	edge node [below] {\((\tau_e)_{T_e(X)}\)} (4)
	edge node [right] {\(F((\gamma_{e,e})_X)\)} (6)
	(4) edge node [below] {\(T'_e(\tau_X)\)} (5)
	(5) edge node [left ] {\((\gamma'_{e,e})_{F(X)}\)} (7)
	(6) edge node [above] {\((\tau_e)_X\)} (7)
	;
	\draw [double equal sign distance, double]
	(1.west) arc (90:270:1cm and 2cm)
	coordinate [pos=.5] (_1)
	(2.east) arc (90:-90:1cm and 2cm)
	coordinate [pos=.5] (_2)
	;
	\path
	(1)  -- node [xshift={18pt}] {(I)} (4)
	(3)  -- node {(II)} (_1)
	(3)  -- node {(III)} (7)
	(5)  -- node {(IV)} (_2)
	(5)  -- coordinate (_1) (2) -- coordinate (_2) (4)
	(_1) -- node {(V)} (_2)
	;
	\end{tikzpicture} .
\end{equation}

The commutativity of the cell (I) in \cref{varpentagono} comes from the naturality of $\tau_e :FT_e \Rightarrow T'_e F$. From the expressions \cref{u-inv-gamma_(g-inv_g),u-inv-gamma_(g_g-inv)}, we have the commutativity of the cells (II) and (IV). Cell (III) commutes automatically, as a particular instance of the diagram  \cref{pent-tau-gamma}, and the square formed by the exterior edges commutes obviously because its vertical edges are identities and its horizontal lines are the same morphism. From these commutativities, one can conclude the commutativity of the cell (V) in the diagram \cref{varpentagono}.

Now, we can construct a new diagram, as follows:

\begin{equation}
	\label{vartriangulos}
	\begin{tikzpicture}[xscale=1.5]
	\path
	node (1) at (0,2) {\( T'_{e} F T_{e}  (X) \)}
	node (2) at (0,0) {\( T'_{e} T'_{e} F (X) \)}
	node (3) at (1,1) {\( T'_{e} F (X) \)}
	node (4) at (2,1) {\( F (X) \)}
	node (5) at (3,2) {\( F T_{e} (X) \)}
	node (6) at (3,0) {\( T'_{e} F (X) \)}
	(3) -- node {(I)}   (3 -| 1)
	(1.center) -- node {(II)} (4 -| 5)
	(2.center) -- node {(III)} (4 -| 5)
	(4) -- node {(IV)}  (4 -| 5)
	[mor]
	(1) edge node [left ] {\( T'_{e}\)} (2)
	(3)
	edge node [right] {\( T'_{e} F u_{X} \)}   (1)
	edge node [right] {\( T'_{e} u'_{F(X)} \)} (2)
	(4)
	edge node [below] { \( u'_{F(X)} \)} (3)
	edge node [left ] { \( F u_{X} \)}   (5)
	edge node [left ] { \( u'_{F(X)} \)} (6)
	(5)
	edge node [above] { \( u'_{FT_{e}(X)} \)} (1)
	edge node [right] { \( (\tau_{e})_{X} \)} (6)
	(6) edge node [below] { \( u'_{T'_{e} F(X)} \) }(2)
	;
	\end{tikzpicture}
\end{equation}

The cell (I) in this diagram is exactly the cell (V) in \cref{varpentagono}. Cells (II) and (III) and the external square commute by the naturality  of $u': \text{Id}_{\C'} \Rightarrow T'_e$. Then one can conclude the commutativity of the cell (IV), which is exactly the diagram \cref{tri-tau-u}.  
\end{proof}

\begin{Theorem}\label{restriction}
Let $T= \left( \{ T_g \}_{g\in G} , \{ \gamma_{g,h} \}_{g,h\in G} , u \right)$ be an action of a group $G$ on a semigroupal category $\C$ and consider $\mathcal{I}$ an ideal in $\C$. Then the data 
\begin{itemize}
	\item $\C_g =\mathcal{I} \cap \overline{T_g (\mathcal{I})}$,
	\item $S_g =T_g \mid_{\C_{g^{-1}}} \colon \C_{g^{-1}} \rightarrow \C_g$,
	\item $\gamma_{g,h} \colon S_g S_h \Rightarrow S_{gh} \colon \overline{S_{h^{-1}} \left( \C_h \cap \C_g \right)} \rightarrow \overline{ S_{g} \left( \C_h \cap \C_g \right)}$
\end{itemize}
define a partial action of $G$ on the semigroupal category $\mathcal{I}$.
\end{Theorem}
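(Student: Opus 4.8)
The plan is to verify, clause by clause, that the listed data satisfy \cref{pact-defn} with the ambient category of that definition taken to be $\cI$ rather than $\C$, together with the coherence diagrams \cref{square-T-gamma}, \cref{(gamma_g1)_X-u_(T_g(X)),(gamma_1g)_X-T_g(u_X)}, \cref{pent-gamma-jota} and \cref{triangulo_u_jota}. The essential observation is that all the substantive work has already been done in \cref{closure-T_g(I)}, \cref{T_g(intesect)-in-intersect}, \cref{T_g(D_(g-inv))-sst-D_g} and \cref{gamma-g-h}; the theorem only has to assemble these facts. I would begin by recalling that each $\Cg=\cI\cap\cl{T_g(\cI)}$ from \cref{D_g=I-cap-T_g(I)} is an ideal of $\cI$, so that the clauses of \cref{pact-defn} make sense over $\cI$.

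For the functorial data I would argue as follows. As $T_g$ is a semigroupal auto-equivalence of $\C$, its restriction to the ideal $\Cgi$ in the sense of \cref{restr-F-to-I} (justified by \cref{closure-F(I)-ideal}) is a semigroupal equivalence onto $\cl{T_g(\Cgi)}$, and this target equals $\Cg$ by \cref{T_g(D_(g-inv))-sst-D_g}; this yields clause \cref{T_g-semigr-equiv}, with $S_g=T_g|_{\Cgi}$ inheriting the constraint $J^g$. Clause (ii) is immediate, since $\C_e=\cI$ by \cref{D_1-is-I} of \cref{T_g(intesect)-in-intersect} and the semigroupal natural isomorphism $u\colon\Id_\C\iiso T_e$ restricts to a semigroupal natural isomorphism $\Id_\cI\iiso S_e$. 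For clause (iii) I would invoke \cref{cl-T_g(D_(g-inv)-cap-D_h)=D_g-cap-D_gh} of \cref{T_g(intesect)-in-intersect}, namely $\cl{T_g(\Cgi\cap\Ch)}=\Cg\cap\Cgh$, which together with \cref{closure-F(I)-ideal} exhibits the restriction of $S_g$ to $\Cgi\cap\Ch$ as a semigroupal equivalence onto $\Cg\cap\Cgh$. Clause (iv) is exactly \cref{gamma-g-h}: on the common domain prescribed there the composite $S_gS_h$ coincides with $T_gT_h$ and $S_{gh}$ coincides with $T_{gh}$, so the global natural isomorphism $\g_{g,h}\colon T_gT_h\iiso T_{gh}$ restricts to the required $\g_{g,h}\colon S_gS_h\iiso S_{gh}$ with target $\Cg\cap\Cgh$.

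The four coherence diagrams then come essentially for free. Diagram \cref{square-T-gamma} is an axiom of the global action $T$; the morphisms \cref{(gamma_g1)_X-u_(T_g(X)),(gamma_1g)_X-T_g(u_X)} are mutually inverse for every object of $\C$; the pentagon \cref{pent-gamma-jota} is just \cref{square-J-eta} expressing that $\g_{g,h}$ is a morphism of semigroupal functors, and \cref{triangulo_u_jota} is the same statement for $u$. Each is an equality of morphisms already valid in $\C$, and a diagram commuting in $\C$ still commutes once its objects and arrows are known to lie in the appropriate subcategories. Consequently the only genuine obstacle, and the step I would treat with the most care, is the \emph{bookkeeping of domains}: checking that the objects over which each diagram is asserted --- for example $X\in\C_{k\m}\cap\C_{k\m h\m}\cap\C_{k\m h\m g\m}$ for \cref{square-T-gamma} --- are carried by the successive functors precisely into the ideals on which the relevant $\g$'s and $J$'s are defined. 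This is handled by iterating the inclusion \cref{T_g(D_(g-inv)-cap-D_h)-sst-D_g-cap-D_gh} of \cref{T_g(intesect)-in-intersect}; once these membership facts are in place, commutativity is inherited and the verification concludes.
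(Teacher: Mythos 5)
Your proposal is correct and follows essentially the same route as the paper, whose own proof simply observes that everything needed was already established in \cref{T_g(intesect)-in-intersect,T_g(D_(g-inv))-sst-D_g,gamma-g-h}. Your write-up is a more explicit, clause-by-clause version of that same assembly (including the observation that the coherence diagrams are inherited from $\C$ once the domain bookkeeping is done), so there is nothing to correct.
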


\begin{proof}
The proof of this theorem follows immediately from what was done in \cref{T_g(intesect)-in-intersect,T_g(D_(g-inv))-sst-D_g,gamma-g-h}.
\end{proof}

\begin{Example}
Consider the monoidal category $\left( \mathcal{O} (X), \cap , X \right)$, in which $X$ is a topological space. Given a global action $\alpha$ of a group $G$ on $X$ by homeomorphisms, this induces naturally an action of $G$ on the monoidal category $\mathcal{O}(X)$. For each $g\in G$ the functor $T_g :\mathcal{O} (X) \rightarrow \mathcal{O}(X)$ sends an open subset $A\subseteq X$, to $T_g (A)=\alpha_g (A)$, for the morphisms, note that  if $A\subseteq B$, then $T_g (A) \subseteq T_g (B)$, therefore, $T_g$ is indeed an endofunctor on the category $\mathcal{O}(X)$.  Let $Y\subseteq X$ be an open subset. The restriction of the action $\alpha$ to $Y$ defines a partial action $\beta$ on the topological space $Y$ (with the induced topology). This partial action corresponds to a partial action on the monoidal category $\left( \mathcal{O}(Y), \cap, Y \right)$.
\end{Example}

\begin{Example}\label{categorificationofA}
Recall that, for each group $G$, one can define the partial group algebra \cite{DEP}
\[
\Bbbk_{par}G =\left\langle [g] \mid g\in G, \; [e]=1, \; [g^{-1}][g][h]=[g^{-1}][gh], \; [g][h][h^{-1}]=[gh][h^{-1}] \right\rangle ,
\]
having a commutative sub-algebra $A_{par} (G)$, henceforth denoted simply by $A$:
\[
A=\left\langle \varepsilon_g =[g][g^{-1}] \mid g\in G \right\rangle .
\]

Let $R$ be a commutative unital ring and $\alpha =\left(  R_g \trianglelefteq R, \alpha_g :R_{g^{-1}} \rightarrow R_g \right)_{g\in G}$ be a partial action of $G$ on $R$ such that the ideals $R_g$ are generated by idempotents $1_g \in R$, that is $R_g=1_g R$.
Consider the monoidal subcategory $\mathcal{A}_R (G)\subseteq {}_R\mathcal{M}$, whose objects are finite tensor products of the form
\[
R_{g_1} \otimes_R \cdots \otimes_R R_{g_n} \Iso R_{g_1}\ldots R_{g_n} \trianglelefteq R ,
\]
and whose morphisms are $R$-module maps between them. More specifically, an $R$-module morphism $\phi$ between $R_{g_1} \otimes_R \cdots \otimes_R R_{g_n}$ and  $R_{h_1} \otimes_R \cdots \otimes_R R_{h_m}$ is given by
\[
\phi (a1_{g_1}\ldots 1_{g_n} )=a1_{h_1}\ldots 1_{h_m} .
\]
The partial action of $G$ on $R$ induces a partial action of $G$ on the monoidal category $\mathcal{A}_R (G)$. First note that the condition $R_g =1_g R$ implies that each ideal $R_g$ is an idempotent object. The categorical ideals $\overline{R_g \otimes_R \mathcal{A}_R (G)}$ are those whose objects are isomorphic to $R_g \otimes_R R_{h_1} \otimes_R \cdots \otimes_R R_{h_n}$ and the functors 
\[
S_g \colon \overline{R_{g^{-1}} \otimes_R \mathcal{A}_R (G)} \rightarrow \overline{R_g \otimes_R \mathcal{A}_R (G)}
\]
are defined by 
\[
S_g (R_{g^{-1}} \otimes_R R_{h_1} \otimes_R \cdots \otimes_R R_{h_n}) =R_g \otimes_R R_{gh_1} \otimes_R \cdots \otimes_R R_{gh_n} .
\]
This is compatible with the partial action $\alpha$ because 
\[
\alpha_g ( R_{g^{-1}}R_{h_1}\ldots R_{h_n} ) =\alpha_g ( R_{g^{-1}} \cap R_{h_1} \cap \cdots \cap R_{h_n} ) =R_g \cap R_{gh_1} \cap \cdots \cap R_{gh_n} =R_g R_{gh_1} \ldots R_{gh_n} .
\]
Each $R$-module morphism $f: R_{g^{-1}} \otimes_R R_{h_1} \otimes_R \cdots \otimes_R R_{h_n} \rightarrow R_{g^{-1}} \otimes_R R_{k_1} \otimes_R \cdots \otimes_R R_{k_m}$ can be viewed as an $R$-module morphism $\widetilde{f}:R_{g^{-1}}R_{h_1}\ldots R_{h_n} \rightarrow R_{g^{-1}}R_{k_1}\ldots R_{k_n}  $. Define the new morphism $S_g (f) :R_g \otimes_R R_{gh_1} \otimes_R \cdots \otimes_R R_{gh_n} \rightarrow R_g \otimes_R R_{gk_1} \otimes_R \cdots \otimes_R R_{gk_m}$ whose associated morphism $\widetilde{S_g (f)}$ is given by $\widetilde{S_g (f)} =\alpha_g \circ f \circ \alpha_{g^{-1}}$.
\end{Example}

\begin{Example}
Consider a multifusion category $\C$ over the base field $\Bbbk$ whose simple objects 
$\{ M_1 , \ldots , M_n \}$  satisfy the fusion rule
\begin{equation}
	\label{orthogonalsimple}
	M_i \otimes M_j =\delta_{i,j} M_i ,
\end{equation}
that is, the simple objects are mutually orthogonal idempotents. In this category, we have $\mathbf{1}_{\C} =\bigoplus_{i=1}^n M_i$. There is a natural action of the symmetric group $S_n$ on 
$\C$: for each $\sigma \in S_n$, we have a functor $T_{\sigma} :\C \rightarrow \C$, sending the simple object $M_i$ to the simple object $M_{\sigma (i)}$ and extending this permutation to direct sums. Regarding morphisms, recall that for simple objects $M_i$ in a fusion category $\C$, we have $\text{End}_{\C}(M_i) =\Bbbk$ and $\text{Hom}_{\C}(M_i ,M_j)=0$, for $i\neq j$. Then define $T_{\sigma} : \text{End}_{\C}(M_i) \rightarrow \text{End}_{\C}(M_{\sigma (i)})$ as the identity on the base field $\Bbbk$. These functors $T_{\sigma}$ are monoidal, because of the expression (\ref{orthogonalsimple}) for the tensor product in the category $\C$. Indeed 
\[
T_\sigma (M_i \otimes M_j) =T_\sigma (\delta_{i,j} M_i) =\delta_{i,j} M_{\sigma (i)} ,
\]
and
\[
T_\sigma (M_i) \otimes T_\sigma (M_j) =M_{\sigma (i)} \otimes M_{\sigma (j)} =\delta_{\sigma (i),\sigma (j)} M_{\sigma (i)}=\delta_{i,j} M_{\sigma (i)}.
\]
Let $X$ be a subset of $\{ 1, \ldots , n \}$, taking the idempotent object $E=\bigoplus_{i\in X} M_i$ and restricting the action of $S_n$ to the ideal $\overline{E\otimes \C}$, we obtain a partial action of $S_n$ on $\C$ as stated in Theorem \ref{restriction}.
\end{Example}

\begin{Theorem}\label{teoremarepparcial}
Let $T=( \{T_g \}_{g\in G} , \{\gamma_{g,h}\}_{g,h\in G}, u )$ be a partial action of a group $G$ on a monoidal category $\mathcal{C}$ generated by central idempotent objects $\{ \1_g \}_{g\in G}$. Define the family of functors $\pi (g) :\mathcal{C} \rightarrow \mathcal{C}$ on objects as $\pi (g)(X)=T_g (X\otimes \1_{g^{-1}})$ and on morphisms as $\pi (g)(f)=T_g (f\otimes \1_{g^{-1}})$, for $f:X\rightarrow Y$. Then, we have the following natural isomorphisms:
\begin{enumerate}
	\item[(I)] $\pi (e) \cong \text{Id}_{\mathcal{C}}$.
	\item[(II)] $\pi (g) \pi (h) \pi (h^{-1}) \cong \pi (gh) \pi (h^{-1})$, for all $g,h \in G$.
	\item[(III)] $\pi (g^{-1}) \pi (g) \pi (h) \cong \pi (g^{-1}) \pi (gh)$, for all $g,h \in G$.
\end{enumerate}
\end{Theorem}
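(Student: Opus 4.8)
The plan is to verify the three natural isomorphisms directly by unwinding the definition $\pi(g)(X) = T_g(X \otimes \1_{g^{-1}})$ and exploiting the structural lemmas already established, particularly \cref{lemadasunidades} and \cref{composicaodosphi}, together with the fact that each $T_g$ is a monoidal equivalence between the ideals $\C_{g^{-1}}$ and $\C_g$. The overarching strategy is that each composite $\pi(g_1)\cdots\pi(g_k)$ applied to $X$ produces, after repeatedly absorbing the inserted idempotents, an object of the form $T_{g_1}\cdots T_{g_k}$ applied to $X$ tensored with a suitable product of $\1_{w}$'s (where the $w$ range over the relevant suffix-products of the $g_i$), and that the natural isomorphisms $\g_{g,h}$ together with the idempotent absorption isomorphisms $X \cong X \otimes \1_g$ (valid for $X \in \C_g$) reconcile the two sides. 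I expect the main obstacle to be bookkeeping: carefully tracking \emph{which} idempotent objects appear and verifying that the domains coincide so that the relevant restrictions of $T_g$ are genuinely defined and invertible.

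For (I), the computation is immediate: since $\1_{e^{-1}} = \1_e = \1$ is the monoidal unit and $\C_e = \C$ with $u \colon \Id_\C \iiso T_e$, I would write $\pi(e)(X) = T_e(X \otimes \1) \cong T_e(X) \cong X$, the first isomorphism being the right unit constraint and the second being $u_X^{-1}$; naturality follows from the naturality of $u$ and of the unit constraint. For (II) and (III), the key computation is to expand both composites on objects. For (II), I would compute $\pi(g)\pi(h)\pi(h^{-1})(X) = T_g\big( T_h( T_{h^{-1}}(X \otimes \1_h) \otimes \1_{h^{-1}}) \otimes \1_{g^{-1}}\big)$ and, on the other side, $\pi(gh)\pi(h^{-1})(X) = T_{gh}\big( T_{h^{-1}}(X \otimes \1_h) \otimes \1_{(gh)^{-1}}\big)$. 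Using $\cl{T_h(\Chi)} = \Ch$ from \cref{T_g(D_(g-inv))-sst-D_g} and the monoidal coherence isomorphism $(J^g)^{-1}$ from \cref{phi_T_g_jota} to split the tensor factors, each side is rewritten so that the outer functor acts on a product containing $T_{h^{-1}}(X \otimes \1_h)$; the isomorphism $\g_{g,h}$ from \cref{composicaodosphi} then matches the composite $T_g T_h$ against $T_{gh}$ on the common ideal $\C_{h^{-1}} \cap \C_{(gh)^{-1}}$.

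Concretely, I would argue that both composites land, up to isomorphism, in the ideal $\C_{gh} \cap \C_g$ (respectively the appropriate intersection dictated by the trailing $\pi(h^{-1})$), so that the relevant restrictions of $\g_{g,h}$ are defined there, and then paste together an isomorphism by chaining: (a) the associativity/unit absorptions $Z \cong Z \otimes \1_w$ valid whenever $Z \in \C_w$; (b) the monoidal structure isomorphisms $J^g, (J^g)^{-1}$ that distribute $T_g$ over tensor products; and (c) the coherence natural isomorphism $\g_{g,h}$ itself. Statement (III) is handled by the mirror argument, replacing the leading $\pi(g)\pi(h)$ by $\pi(g^{-1})\pi(g)$ and invoking the inverse coherence isomorphism $u^{-1}\g_{g^{-1},g} \colon T_{g^{-1}} T_g \iiso \Id_\C$ from \cref{u-inv-gamma_(g-inv_g)} to collapse $\pi(g^{-1})\pi(g)$; the trailing $\pi(h)$ on both sides then guarantees that all objects involved lie in the ideals on which these isomorphisms are defined. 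The genuine difficulty, as noted, is not conceptual but organizational: one must confirm at each stage that the object being fed to a functor $T_g$ actually lies in $\C_{g^{-1}}$ (so that $T_g$ is defined and the absorption $\1_{g^{-1}} \otimes (\esp) \cong (\esp)$ is legitimate), and that the resulting isomorphism is natural in $X$, which follows from the naturality of each constituent ($u$, $\g_{g,h}$, $J^g$, and the exchange isomorphisms $\sigma^e$) assembled in the correct order.
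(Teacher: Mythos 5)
Your proposal is correct and takes essentially the same approach as the paper's proof: a direct object-level computation that unwinds $\pi(g)(X)=T_g(X\otimes\1_{g^{-1}})$, absorbs the idempotent units, and chains the isomorphisms of \cref{lemadasunidades}, the monoidal structure maps $J^g$, and the coherence isomorphisms $\gamma_{g,h}$, with naturality inherited from the constituent natural transformations. The only cosmetic difference is in (II)--(III): the paper reduces both composites to a common third object (for (II), $T_g(X\otimes\1_h\otimes\1_{g^{-1}})$, obtained by collapsing the adjacent pair $T_hT_{h^{-1}}$ via $\gamma_{h,h^{-1}}$ and $u$), whereas you transform one side directly into the other through a single application of $\gamma_{g,h}$ (respectively, collapse $\pi(g^{-1})\pi(g)$ to $(\esp)\otimes\1_{g^{-1}}$ on $\C_{g^{-1}}$) --- the ingredients and the bookkeeping of domains are identical.
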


\begin{proof}
(I) Take an object $X\in \mathcal{C}^{(0)}$, then
\[
\pi (e)(X)=T_e (X\otimes \1_e) =T_e (X\otimes \1) .
\] 
Applying $u^{-1}_X \circ T_e (r_X)$ we obtain the natural isomorphism (I).

(II) For $g,h \in G$ and $X\in \mathcal{C}^{(0)}$, we have
\begin{eqnarray*}
	\pi (g) \pi (h) \pi (h^{-1}) (X) & = & \pi (g) \left( T_h \left(T_{h^{-1}} \left(X\otimes \1_h\right) \otimes \1_{h^{-1}} \right)\right)\\
	& \cong & \pi (g) \left( T_h \left(T_{h^{-1}} \left(X\otimes \1_h\right) \right)\right) \\
	&\cong & \pi (g) (X\otimes \1_h) \\
	& = & T_g (X\otimes \1_h \otimes \1_{g^{-1}}) .
\end{eqnarray*}
On the other hand, 
\begin{eqnarray*}
	\pi (gh) \pi (h^{-1}) (X) & = & T_{gh} (T_{h^{-1}} (X\otimes \1_h )\otimes \1_{(gh)^{-1}}) \\
	& \cong & T_{gh} (T_{h^{-1}} (X\otimes \1_h )\otimes  \1_{h^{-1}} \otimes \1_{(gh)^{-1}}) \\
	& \cong & T_{gh} (T_{h^{-1}} (X\otimes \1_h )\otimes  T_{h^{-1}} (\1_h \otimes \1_{g^{-1}})) \\
	& \cong & T_{gh} (T_{h^{-1}} (X\otimes \1_h \otimes \1_{g^{-1}})) \\
	& \cong & T_g (X\otimes \1_h \otimes \1_{g^{-1}}).
\end{eqnarray*}
The naturality of isomorphisms is easily derived from the definition of functors $\pi (g)$.

The natural isomorphism (III) is completely analogous.
\end{proof}

\section{Partial actions and Hopf Polyads}

Consider a partial action $T=(\{T_g\}_{g\in G},\{\g_{g,h}\}_{g,h\in G},u)$ of a group $G$ on a semigroupal category $\C$. For each element $g\in G$ one can define a monad $(P_g , \mu_g , \eta_g )$ in which $P_g =T_g T_{g^{-1}} :\C_g \rightarrow \C_g$, the multiplication $\mu_g :P_g P_g \Rightarrow P_g$, localized at $X\in \C_g$, is given by 
\begin{equation}\label{multiplication_monad}
	\left( \mu_g \right)_X =T_g \left( \left( \gamma_{e,g^{-1}} \right)_X \right) \circ T_g \left( \left( \gamma_{g^{-1},g} \right)_{T_{g^{-1}}(X)} \right)
\end{equation}
and the unit $\eta_g :\text{Id}_{\C_g} \Rightarrow P_g$, at $X\in \C_g$, is given by 
\begin{equation}\label{unit_monad}
	\left( \eta_g \right)_X =\left( \gamma_{g,g^{-1}} \right)^{-1}_X \circ u_X ,
\end{equation}
which is exactly the inverse of the natural isomorphism (\ref{(gamma_g1)_X-u_(T_g(X))}). The associativity of the multiplication $\mu_g$ is easily obtained from iterated uses of the associativity diagram (\ref{square-T-gamma}). In order to verify the unit axiom, consider the following diagram.

\begin{equation}
	\label{leftunit}
	\begin{tikzpicture}[xscale=2.25]
	\path
	node (1)  at (0, 0) {\( T_{g} T_{g^{-1}} \)}
	node (2)  at (1, 0) {\( T_{e} T_{g} T_{g^{-1}} \)}
	node (3)  at (2, 0) {\( T_{g g^{-1}} T_{g} T_{g^{-1}} \)}
	node (4)  at (3, 0) {\( T_{g} T_{g^{-1}} T_{g} T_{g^{-1}} \)}
	node (5)  at (1,-1) {\( T_{g} T_{g^{-1}} \)}
	node (6)  at (2,-1) {\( T_{g g^{-1} g} T_{g^{-1}} \)}
	node (7)  at (3,-1) {\( T_{g} T_{g^{-1} g} T_{g^{-1}} \)}
	node (8)  at (2,-2) {\( T_{g} T_{g^{-1}} \)}
	node (9)  at (3,-2) {\( T_{g} T_{e} T_{g^{-1}} \)}
	node (10) at (3,-3) {\( T_{g} T_{g^{-1}} \)}
	[nat] 
	(1) edge node [above] {\( u T_{g} T_{g^{-1}} \)} (2)
	(3) edge node [above] {\( \gamma_{g, g^{-1}}^{-1} T_{g} T_{g^{-1}} \)} (4)
	(2) edge node [left ] {\( \gamma_{e, g} T_{g^{-1}} \)} (5)
	(3) edge node [left ] {\( \gamma_{gg^{-1}, g} T_{g^{-1}} \)} (6)
	(4) edge node [left ] {\( T_{g} \gamma_{g^{-1}, g} T_{g^{-1}} \)} (7)
	(6) edge node [below] {\( \gamma_{g, g^{-1}g}^{-1} T_{g^{-1}} \)} (7)
	(8) edge node [above] {\( \gamma_{g, e}^{-1} T_{g^{-1}} \)} (9)
	(9) edge node [left ] {\( T_{g} u^{-1} T_{g^{-1}} \)} (10)
	(1.north) edge [bend left] node [above] {\( \eta_{g} T_g T_{g^{-1}} \)} (4.north)
	(4) edge [bend left=20] node [right] {\( \mu_{g} \)} (10)
	[igual]
	(1) edge (5)
	(2) edge (3)
	(5) edge (6)
	edge (8)
	(6) edge (8)
	(7) edge (9)
	(8) edge (10)
	;
	\end{tikzpicture}
\end{equation}

This family of monads gives an example of a structure known in the literature as a Hopf polyad.

\begin{Definition}[{\cite{Bruguieres}}]
	A \emph{polyad} consists of a set of data $(D, \C , T, \mu, \eta)$, in which
	\begin{itemize}
		\item $D$ is a category;
		\item $\C =\left( \C_i \right)_{i\in D^{(0)}}$ is a family of categories indexed by the objects of $D$;
		\item $T=\left( T_a \right)_{a\in D^{(1)}}$ is a family of functors indexed by the morphisms of $D$, with $T_a :\C_i \rightarrow \C_j$, for $a\in \Hom_D (i,j)$.
		\item $\mu =\left( \mu_{a,b} \right)_{(a,b)\in D^{(2)}}$ is a family of natural transformations indexed by the pairs of composable morphisms of $D$, with $\mu_{a,b} :T_a T_b \Rightarrow T_{ab}$;
		\item $\eta =\left( \eta_i \right)_{i\in D^{(0)}}$ is a family of natural transformations indexed by the objects of $D$, with $\eta_i :\text{Id}_{\C_i} \Rightarrow T_i =T_{Id_i}$;
	\end{itemize}
	subject to the following axioms:
	\begin{enumerate}
		\item Given three sequential composable morphisms $a,b,c\in D^{(1)}$,
		\[
		\mu_{ab,c} \circ (\mu_{a,b} \ast \text{Id}_{T_c})=\mu_{a,bc} \circ (\text{Id}_{T_a} \ast \mu_{b,c}) ,
		\]
		\item Given a morphism $a: i\rightarrow j$ in $D$,
		\[
		\mu_{a, Id_i} \circ (\text{Id}_{T_a} \ast \eta_i ) =\mu_{Id_j, a} \circ (\eta_j \ast \text{Id}_{T_a} ) .
		\]
	\end{enumerate}
	The category $D$ is called the source of the polyad $(D, \mathcal{C} , T, \mu, \eta)$, or, equivalently, one can say that this polyad is a $D$-polyad.
\end{Definition}

\begin{Definition}
	A \emph{comonoidal polyad} is a polyad $(D, \mathcal{C} , T, \mu, \eta)$ endowed with a monoidal structure on each category $\mathcal{C}_i$, for $i\in D^{(0)}$, such that each functor $T_a$, for $a\in D^{(1)}$, is comonoidal and the natural transformations $\mu_{a,b}$, for $(a,b)\in D^{(2)}$ and $\eta_i$, for $i\in D^{(0)}$ are comonoidal natural transformations.
\end{Definition}

\begin{Definition}
	A \emph{Hopf polyad} is a comonoidal polyad for which the left and right fusion operators,
	\begin{gather*}
		H^l_{a,b}(X,Y) =\left( T_a (X) \otimes \left(\mu_{a,b}\right)_{Y}\right) \circ \left(\xi_a\right)_{X, T_b (Y)} :T_a (X \otimes T_b (Y)) \rightarrow T_a (X) \otimes T_{ab} (Y) ,\\
		H^r_{a,b}(Y,X) =\left( \left( \mu_{a,b}\right)_Y \otimes T_a (X) \right) \circ \left(\xi_a\right)_{T_b (Y), X} :T_a (T_b (Y) \otimes X) \rightarrow T_{ab} (Y) \otimes T_a (X) ,
	\end{gather*}
	are invertible for all composable pairs of morphisms $(a,b)\in D^{(2)}$, with $k\stackrel{b}{\rightarrow} j \stackrel{a}{\rightarrow} i$, and for all $X$ and $Y$ objects in $\mathcal{C}_j$ and $\mathcal{C}_k$, respectively. Here, $\xi_a: T_a (\esp \otimes \esp )\Rightarrow T_a (\esp ) \otimes T_a (\esp )$ is the comonoidal structure on the functor $T_a$.
\end{Definition}

For the case of a partial action $T=(\{T_g\}_{g\in G},\{\g_{g,h}\}_{g,h\in G},u)$, the source category is $\underline{G}$, whose objects are the elements of the group $G$ and the morphisms are given by
\[
\Hom_{\underline{G}}(g,h) = \left\{
\begin{array}{lcl}
\emptyset, & \text{ if } & g\neq h\\
\{ \text{Id}_g \}, & \text{ if } & g=h.
\end{array}
\right. 
\]
This category is nothing else than the category presented in Example \ref{categorymonoid}, for the case where the monoid is the group $G$. The family of categories in the polyad is $\{ \mathcal{C}_g \} _{g\in G}$ of the partial action. The family of functors of the polyad is given by $\{ P_g =P_{Id_g} =T_g \circ T_{g^{-1}} :\mathcal{C}_g \rightarrow \mathcal{C}_g \}_{g\in G} $. The natural transformations $\mu_{a,b}$ for composable morphisms are simply the multiplication morphisms $\mu_g =\mu_{Id_g ,Id_g}=T_g u^{-1}T_{g^{-1}} \circ T_g \gamma_{g^{-1}, g} T_{g^{-1}} :P_{g}P_{g} \Rightarrow P_{g}$, defined in (\ref{multiplication_monad})\footnote{Note that $P_g$ is only a short notation for $P_{Id_g}$, then $P_{Id_g Id_g} =P_{Id_g}=P_g$} and the natural transformations $\eta_g$ are those given by (\ref{unit_monad}), that is $\eta_g =\gamma^{-1}_{g,g^{-1}}\circ u$.

In the case of a partial action $T=(\{T_g\}_{g\in G},\{\g_{g,h}\}_{g,h\in G},u)$ in which the categorical ideals $\mathcal{C}_g$ are given by $\mathcal{C}_g =\overline{\1_g \otimes \mathcal{C}}$, then all the categories are monoidal and all the monads $P_g =T_g \circ T_{g^{-1}}$ are strongly monoidal, in particular, they are comonoidal with 
\[
\left(\xi_g \right)_{X,Y} =\left(J^g\right)^{{-1}}_{T_{g^{-1}} (X),T_{g^{-1}}(Y)} \circ T_g ( (J^{g^{-1}})^{-1}_{X,Y} ) ,
\]
for all $X,Y \in \mathcal{C}_g$. Also, it is easy to see that the multiplication morphisms $\mu_g$ and the units $\eta_g$ are comonoidal for all $g\in G$.

Finally, the left and right fusion operators can be written in our context as 
\begin{gather*}
	H^l_g(X,Y) =\left( P_g (X) \otimes \left(\mu_g\right)_{Y}\right) \circ \left(\xi_g \right)_{X, P_g (Y)} :P_g (X \otimes P_g (Y)) \rightarrow P_g (X) \otimes P_g (Y) ,\\
	H^r_g (Y,X) =\left( \left( \mu_g\right)_Y \otimes P_g (X) \right) \circ \left(\xi_g\right)_{P_g (Y), X} :P_g (P_g (Y) \otimes X) \rightarrow P_g (Y) \otimes P_g (X)  ,
\end{gather*}
for all $X,Y \in \mathcal{C}_g$, because both $\mu_g$ and $\xi_g$ are natural isomorphisms. In particular, each monad $P_g$ is a Hopf monad, in the sense of Bruguières, Lack and Virelizier \cite{BruVir}. 

\section{Globalization}

\begin{Definition}\label{glob-defn}
	Let $T=(\{T_g\}_{g\in G},\{\g_{g,h}\}_{g,h\in G},u)$ be a partial action of a group $G$ on a semigroupal category $\C$. A \emph{globalization} of $T$ is a global action $T'=(\{T'_g\}_{g\in G},\{\g'_{g,h}\}_{g,h\in G},u')$ of $G$ on a semigroupal category $\C'$ together with a morphism $(F,\{\tau_g\}_{g\in G})$ from $T$ to $T'$, such that
	\begin{enumerate}
		\item the subcategory $\cl{F(\C)}$ is an ideal in $\C'$, and $F$ establishes a semigroupal equivalence between $\C$ and $\cl{F(\C)}$;
		\item for any $U\in\cl{F(\C)}$ if $T'_g(U)\in\cl{F(\C)}$, then $U\in\cl{F(\Cgi)}$,\label{T'(g)-in-cl-F(C)=>U-in-cl-F(D_(g-inv))}
		\item $\bigcup_{g\in G}\cl{T_gF(\C)}=\C'$.	
	\end{enumerate}
\end{Definition}
One may show that $T'_g(U)\in\cl{F(\Cg)}$ in \cref{T'(g)-in-cl-F(C)=>U-in-cl-F(D_(g-inv))}.

Let \(T = (\{T_g\}_{g\in G}, \{\gamma_{g,h}\}_{g,h \in G}, u)\) be a partial action of \(G\) on \(\C\), whose domains are generated by central idempotents: \(\Cg = \overline{\1_g \otimes \C}\). 

Consider the category \([\underline{G}, \C]\) of functors from \( \underline{G}\) to \(\C\).

Given \(F_1, F_2, F'_1, F'_2\) objects in \([G, \C]\) and \(\alpha_1 \colon F_1 \Rightarrow F'_1, \alpha_2 \colon F_2 \Rightarrow F'_2\) morphisms in \([G, \C]\) one can define
\[
\begin{split}
F_1 \bullet F_2 \colon G & \longto \C\\
g & \longmapsto F_1(g) \otimes F_2(g)
\end{split}
\quad\text{and}\quad
\begin{split}
\alpha_1 \bullet \alpha_2 \colon F_1 \bullet F_2 & \Longrightarrow  F'_1 \bullet F'_2\\
(\alpha_1 \bullet \alpha_2)_g:  F_1 (g) \otimes  F_2 (g) & \rightarrow F'_1 (g) \otimes F'_2 (g)
\end{split}
\]

Moreover, if \(\C\) is a monoidal category, then one can define a monoidal unit on \([G, \C]\) via
\[
\begin{split}
\underline{\1} \colon G & \longto \C\\
g & \longmapsto \underline{\1}(g) = \1
\end{split}
\]

For each element $g\in G$ and for each functor $F\in [G, \mathcal{C}]$, define a new functor $\mathcal{T}_g(F)$, given by $\mathcal{T}_g(F)(h)=F(hg)$. One can easily verify that the natural transformation $\Gamma_{g,h}: \mathcal{T}_g(\mathcal{T}_h (F)) \Rightarrow \mathcal{T}_{gh}(F)$ is the identity. Also, for each $g\in G$ and functors $F_1 ,F_2 \in [G,\mathcal{C}]$ the natural transformation $\mathcal{J}_{F_1 ,F_2}^g :\mathcal{T}_g (F_1) \bullet \mathcal{T}_g (F_2) \Rightarrow \mathcal{T}_g (F_1 \bullet F_2) $ corresponds to the identity. Finally, $\mathcal{T}_e (F) =F$, for any functor $F\in [G, \mathcal{C}]$. Then, the functors $\mathcal{T}_g$ are semigroupal (resp. monoidal) endofunctors on the monoidal category $[\underline{G} ,\mathcal{C}]$.  The aforementioned data define an action of the group $G$ on the semigroupal (resp. monoidal) category $[G, \mathcal{C}]$.

Consider now a monoidal category $(\mathcal{C} , \otimes, \1 )$ with a partial action 
\[
T=\left( \left\{ \mathcal{C}_g =\overline{\1_g \otimes \mathcal{C}} \right\}_{g\in G} , \left\{ T_g  \right\}_{g\in G} , \left\{ \gamma_{g,h} \right\}_{g,h \in G} \right)
\]
in which $\{ \1_g \}_{g\in G}$ are central idempotent objects in $\mathcal{C}$. From this data, one can define a functor $\Phi :\mathcal{C} \rightarrow [G,\mathcal{C}]$ as
\[
\Phi (X)(g)=T_{g} (\1_{g^{-1}} \otimes X) , 
\]
and, for a morphism $f:X\rightarrow Y$, the natural transformation $\Phi (f): \Phi (X) \Rightarrow \Phi (Y)$ localized at $g\in G$ is simply the morphism $\Phi (f)_g = T_{g} (\1_{g^{-1}} \otimes f) $
in the category $\mathcal{C}$.

The functor $\Phi$ is injective in the objects. Indeed, for $X,Y\in \mathcal{C}$ such that $\Phi (X) =\phi (Y)$ then 
\[
X\cong T_e (\1 \otimes X) =\Phi (X)(e) =\Phi (Y)(e) =T_e (\1 \otimes Y)\cong Y.
\]
With a similar argument, one can show that $\Phi$ is faithful. Moreover, for any objects $X,Y\in \mathcal{C}$ and any $g\in G$
\begin{align*}
	\Phi (X\otimes Y)(g)
	& = T_g (\1_{g^{-1}} \otimes X \otimes Y)\\
	& \Iso T_g (\1_{g^{-1}} \otimes X \otimes \1_{g^{-1}} \otimes Y) \\
	& \Iso T_g (\1_{g^{-1}} \otimes X) \otimes T_g (\1_{g^{-1}} \otimes Y) \\
	& = \Phi (X) (g) \otimes \Phi (Y)(g) \\
	& = \Phi (X) \bullet \Phi (Y) (g) .
\end{align*}

Let us construct a new monoidal category generated by objects $\mathcal{T}_g (\Phi (X))$, for $X\in \mathcal{C}$ and $g\in G$ with the product $\bullet$ previously defined.

\begin{Lemma} \label{lemaglobalizacao}\
	
	\begin{enumerate}
		\item For each objects $X,Y \in \mathcal{C}$ and for each $g\in G$, we have the natural isomorphisms
		\[
		\Phi (X) \bullet \mathcal{T}_g (\Phi (Y)) \Iso \Phi (X\otimes T_g (\1_{g^{-1}} \otimes Y)) 
		\]
		and
		\[
		\mathcal{T}_g (\Phi (X)) \bullet \Phi (Y) \Iso \Phi (T_g (\1_{g^{-1}} \otimes X) \otimes Y) .
		\]
		\item  For each objects $X,Y \in \mathcal{C}$ and elements $g, h\in G$, we have the natural isomorphism 
		\[
		\mathcal{T}_g (\Phi (X)) \bullet \mathcal{T}_h (\Phi (Y)) \Iso \mathcal{T}_g \Phi (X\otimes T_{g^{-1}h} (\1_{h^{-1}g} \otimes Y))
		\]
	\end{enumerate}
\end{Lemma}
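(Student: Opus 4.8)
The plan is to deduce all three isomorphisms from a single key natural isomorphism together with the fact, established in the computation immediately preceding the statement, that $\Phi$ is monoidal for $\bullet$, i.e. $\Phi(A)\bullet\Phi(B)\Iso\Phi(A\ot B)$ naturally in $A,B$. Two auxiliary facts will be used throughout. First, $\Phi(\1)(h)=T_h(\1_{h\m})\Iso\1_h$ via the unit constraint and $(\varphi^h)\m$ from \cref{partialcentralidempotents}, so $\Phi(\1)$ behaves as a \emph{local unit}; second, for $A\in\C_k=\overline{\1_k\ot\C}$ one has $\1_k\ot A\Iso A$, coming from the monoidal-unit structure of $\C_k$. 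Since $\underline G$ is discrete, a morphism in $[\underline G,\C]$ is merely a family of morphisms indexed by $h\in G$ with no naturality constraint in $h$; hence it suffices to exhibit, for each $h$, component isomorphisms natural in $X,Y$, and as every map below is assembled from the natural data $\gamma$, $J^{(-)}$, $\varphi^{(-)}$, the fusion/exchange isomorphisms of the idempotents and the unit constraints, naturality in $X,Y$ will be automatic.

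The isomorphism I would single out is the \emph{translation identity}
\[
\Phi(\1)\bullet\mathcal{T}_g(\Phi(Y))\Iso\Phi\big(T_g(\1_{g\m}\ot Y)\big),\qquad\text{i.e. at each }h:\ \1_h\ot T_{hg}(\1_{g\m h\m}\ot Y)\Iso T_h(\1_{h\m}\ot T_g(\1_{g\m}\ot Y)).
\]
Granting it, the first isomorphism of part (i) follows formally, using $\Phi(X)\Iso\Phi(X)\bullet\Phi(\1)$ and associativity of $\bullet$ (inherited componentwise from $\ot$):
\[
\Phi(X)\bullet\mathcal{T}_g(\Phi(Y))\Iso\Phi(X)\bullet\Phi(\1)\bullet\mathcal{T}_g(\Phi(Y))\Iso\Phi(X)\bullet\Phi\big(T_g(\1_{g\m}\ot Y)\big)\Iso\Phi\big(X\ot T_g(\1_{g\m}\ot Y)\big).
\]
The second isomorphism of part (i) is handled the same way: the translation identity gives $\Phi(T_g(\1_{g\m}\ot X))\Iso\Phi(\1)\bullet\mathcal{T}_g(\Phi(X))$, and since $\Phi(Y)(h)\in\C_h$ forces the product to lie in $\C_h$ and thereby absorb the spurious factor $\1_h$, one obtains $\big(\Phi(\1)\bullet\mathcal{T}_g(\Phi(X))\big)\bullet\Phi(Y)\Iso\mathcal{T}_g(\Phi(X))\bullet\Phi(Y)$, whence $\mathcal{T}_g(\Phi(X))\bullet\Phi(Y)\Iso\Phi(T_g(\1_{g\m}\ot X)\ot Y)$ by monoidality of $\Phi$.

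Part (ii) then reduces to part (i). Because $\mathcal{T}$ is the globalizing action on $[\underline G,\C]$ whose structure morphisms $\Gamma$ and $\mathcal{J}$ are identities, one has the \emph{strict} equalities $\mathcal{T}_g\mathcal{T}_{g\m h}=\mathcal{T}_h$ and $\mathcal{T}_g(F_1\bullet F_2)=\mathcal{T}_g(F_1)\bullet\mathcal{T}_g(F_2)$, so that
\[
\mathcal{T}_g(\Phi(X))\bullet\mathcal{T}_h(\Phi(Y))=\mathcal{T}_g\big(\Phi(X)\bullet\mathcal{T}_{g\m h}(\Phi(Y))\big).
\]
Applying the functor $\mathcal{T}_g$ to the first isomorphism of part (i) with $g$ replaced by $g\m h$ (so that $\1_{(g\m h)\m}=\1_{h\m g}$) yields exactly $\mathcal{T}_g\Phi\big(X\ot T_{g\m h}(\1_{h\m g}\ot Y)\big)$, which is the assertion of part (ii).

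The substantial point, and the main obstacle, is the translation identity itself, where one must resist identifying $\mathcal{T}_g(\Phi(Y))$ with $\Phi(T_g(\1_{g\m}\ot Y))$: these are \emph{not} isomorphic in general, differing at $h$ precisely by the factor $\Phi(\1)(h)\Iso\1_h$. To prove it I would work at a fixed $h$ and proceed in three moves. First, rewrite the inner object by $J^g$, duplication of $\1_{g\m}$ (fusion and exchange) and \cref{lemadasunidades} as $\1_{h\m}\ot T_g(\1_{g\m}\ot Y)\Iso T_g(\1_{g\m}\ot\1_{g\m h\m}\ot Y)$. Second, apply $\gamma_{h,g}$ to convert $T_hT_g$ into $T_{hg}$; this is legitimate precisely because $\1_{g\m}\ot\1_{g\m h\m}\ot Y$ lies in $\C_{g\m}\cap\C_{g\m h\m}$, the ideal on which $\gamma_{h,g}$ is defined. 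Third, strip the spurious idempotent using $(J^{hg})\m$ together with \cref{lemadasunidades} in the form $T_{hg}(\1_{(hg)\m}\ot\1_{g\m})\Iso\1_{hg}\ot\1_h$, the leftover $\1_{hg}$ being absorbed into $T_{hg}(\1_{g\m h\m}\ot Y)\in\C_{hg}$ and leaving the desired factor $\1_h$. The delicate bookkeeping is thus entirely in verifying that each application of $\gamma_{h,g}$ and of $J^{hg}$ remains within the correct ideal, and that the extra idempotent created at each stage is the very one that the companion factor ($\Phi(X)$, $\Phi(Y)$, or the local unit $\Phi(\1)$) is able to absorb.
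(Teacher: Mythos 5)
Your proposal is correct, and its architecture differs from the paper's. The paper proves all three isomorphisms by direct componentwise computation: for the first isomorphism of (i) it runs, at each $h$, the chain that inserts $\1_h\otimes\1_{hg}$, rewrites this pair via \cref{lemadasunidades}, merges with $J^{hg}$, converts $T_{hg}$ into $T_hT_g$ via $\gamma_{h,g}^{-1}$, splits again with $T_h((J^g)^{-1})$, contracts the units, and finishes with $J^h$; the second isomorphism of (i) is said to be ``obtained similarly''; and part (ii) is then a second, separate computation of the same kind at each $k$. You instead prove only the special case $X=\1$ of the first isomorphism (your translation identity) --- and your three ``moves'' are exactly the paper's chain with the factor $X$ deleted, read in reverse --- then recover both isomorphisms of (i) formally from the $\bullet$-monoidality of $\Phi$ established just before the lemma, associativity of $\bullet$, and absorption of the local unit $\1_h$ by objects of $\C_h$ (which, as you note, needs centrality of the idempotents). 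The genuinely different step is part (ii): where the paper recomputes from scratch, you exploit the strict identities $\mathcal{T}_g\mathcal{T}_{g^{-1}h}=\mathcal{T}_h$ and $\mathcal{T}_g(F_1\bullet F_2)=\mathcal{T}_g(F_1)\bullet\mathcal{T}_g(F_2)$ to write $\mathcal{T}_g(\Phi(X))\bullet\mathcal{T}_h(\Phi(Y))=\mathcal{T}_g\bigl(\Phi(X)\bullet\mathcal{T}_{g^{-1}h}(\Phi(Y))\bigr)$ and then simply apply the functor $\mathcal{T}_g$ to (i); this makes formal the substitution ($h\mapsto kg$, $g\mapsto g^{-1}h$) that is only implicit in the paper's second computation. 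Your route buys economy and structural clarity (one computation instead of three, with the unit object and the strictness of the translation action doing the bookkeeping); the paper's route yields the explicit componentwise isomorphisms in terms of $\varphi$, $J$ and $\gamma$, which is the form in which they are reused in the globalization theorem that follows. Your ideal-membership checks (e.g.\ that $\1_{g^{-1}}\otimes\1_{(hg)^{-1}}\otimes Y$ lies in $\C_{g^{-1}}\cap\C_{(hg)^{-1}}$, so that $\gamma_{h,g}$ applies) are the correct ones and match the paper's.
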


\begin{proof}
	(i) For any object $h$ in the category $G$,
	\begin{align*}
		\Phi (X) \bullet \mathcal{T}_g (\Phi (Y)) (h) 
		& = \Phi (X) (h) \otimes \mathcal{T}_g (\Phi (Y)) (h) =\Phi (X) (h) \otimes \Phi (Y) (hg) \\
		& = T_h (\1_{h^{-1}} \otimes X) \otimes T_{hg} (\1_{(hg)^{-1}} \otimes Y) \\
		& \Iso T_h (\1_{h^{-1}} \otimes X) \otimes \1_h \otimes \1_{hg} \otimes  T_{hg} (\1_{(hg)^{-1}} \otimes Y) \\
		\overset{(I)}&{\Iso} T_h (\1_{h^{-1}} \otimes X) \otimes T_{hg} ( \1_{g^{-1}} \otimes \1_{(hg)^{-1}}) \otimes  T_{hg} (\1_{(hg)^{-1}} \otimes Y) \\
		\overset{(II)}&{\Iso} T_h (\1_{h^{-1}} \otimes X) \otimes T_{hg} ( \1_{g^{-1}} \otimes \1_{(hg)^{-1}}\otimes Y) \\
		\overset{(III)}&{\Iso} T_h (\1_{h^{-1}} \otimes X) \otimes T_h( T_g ( \1_{g^{-1}} \otimes \1_{(hg)^{-1}}\otimes Y)) \\
		\overset{(IV)}&{\Iso} T_h (\1_{h^{-1}} \otimes X) \otimes T_h( T_g ( \1_{g^{-1}} \otimes \1_{(hg)^{-1}})\otimes  T_g( \1_{g^{-1}}\otimes Y)) \\
		\overset{(V)}&{\Iso} T_h (\1_{h^{-1}} \otimes X) \otimes T_h(  \1_{h^{-1}} \otimes  T_g( \1_{g^{-1}}\otimes Y)) \\
		\overset{(VI)}&{\Iso} T_h (\1_{h^{-1}} \otimes X\otimes  T_g( \1_{g^{-1}}\otimes Y)) \\
		&= \Phi (X\otimes  T_g( \1_{g^{-1}}\otimes Y))(h) .
	\end{align*}
	The first three equalities are, respectively, the definition of the $\bullet$ tensor product in the category $[G, \mathcal{C}]$, the definition of the functor $\mathcal{T}_g(\Phi (Y)$ and the definition of the functor $\Phi :\mathcal{C}\rightarrow [G, \mathcal{C}]$. The first isomorphism (not numbered) comes from the fact that the units $\1_h$ and $\1_{hg}$ are the monoidal units in the respective categories $\mathcal{C}_h$ and $\mathcal{C}_{hg}$. For the sake of simplicity, we are ommiting the explicit left and right unit morphisms where their omission does not cause any problem in following the reasoning. The natural isomorphism (I) comes from the fact that the functor $T_{hg}|_{\mathcal{C}_{g^{-1}} \cap\mathcal{C}_{(hg)^{-1}}}$ is monoidal and takes the monoidal unit $\1_{g^{-1}}\otimes \1_{(hg)^{-1}}$ of $\mathcal{C}_{g^{-1}} \cap\mathcal{C}_{(hg)^{-1}}$  into the monoidal unit $\1_h \otimes \1_{hg}$ of $\mathcal{C}_h \cap \mathcal{C}_{hg}$. The natural isomorphism (II) is nothing else than $\text{Id}_{T_h (\1_{h^{-1}} \otimes X)} \otimes J^{hg}_{\1_{g^{-1}} \otimes \1_{(hg)^{-1}} ,\1_{(hg)^{-1}} \otimes Y}$. The natural isomorphism (III) corresponds to $\text{Id}_{T_h (\1_{h^{-1}} \otimes X)} \otimes (\gamma_{h,g})^{-1}_{\1_{g^{-1}} \otimes \1_{(hg)^{-1}} \otimes Y}$. In (IV), we have the natural isomorphism $\text{Id}_{T_h (\1_{h^{-1}} \otimes X)} \otimes T_h ((J^g)^{-1}_{\1_{g^{-1}}\otimes \1_{(hg)^{-1}} , \1_{g^{-1}} \otimes Y})$, while in (V) is again the fact that $T_g|_{\mathcal{C}_{g^{-1}} \cap \mathcal{C}_{(hg)^{-1}}}$ is a monoidal functor, associating the monoidal units of the respective categories, similar to what happened in (I). Finally, but not less important, in (VI), we have the natural isomorphism $J^h_{\1_{h^{-1}} \otimes X ,\1_{h^{-1}} \otimes T_g (\1_{g^{-1}} \otimes Y)}$. The last equality is the definition of the functor $\Phi$.
	
	The second isomorphism of item (i) is obtained similarly.
	
	(ii) Take an object $k$ in the category $G$, then:
	\begin{align*}
		\mathcal{T}_g (\Phi (X)) \bullet \mathcal{T}_h (\Phi (Y))(k)
		& =\Phi (X)(kg) \otimes \Phi (Y) (kh) \\
		& = T_{kg}(\1_{(kg)^{-1}} \otimes X) \otimes T_{kh} (\1_{(kh)^{-1}} \otimes Y )\\
		& \Iso T_{kg}(\1_{(kg)^{-1}} \otimes X) \otimes \1_{kg} \otimes \1_{kh} \otimes  T_{kh} (\1_{(kh)^{-1}} \otimes Y )\\
		& \Iso T_{kg}(\1_{(kg)^{-1}} \otimes X) \otimes T_{kh} (\1_{h^{-1}g} \otimes \1_{(kh)^{-1}}) \otimes  T_{kh} (\1_{(kh)^{-1}} \otimes Y )\\
		\overset{(I)}&{\Iso} T_{kg}(\1_{(kg)^{-1}} \otimes X) \otimes T_{kh} (\1_{h^{-1}g} \otimes \1_{(kh)^{-1}} \otimes Y )\\
		& = T_{kg}(\1_{(kg)^{-1}} \otimes X) \otimes T_{kgg^{-1}h} (\1_{h^{-1}g} \otimes \1_{(kh)^{-1}} \otimes Y )\\
		\overset{(II)}&{\Iso} T_{kg}(\1_{(kg)^{-1}} \otimes X) \otimes T_{kg}( T_{g^{-1}h} (\1_{h^{-1}g} \otimes \1_{(kh)^{-1}} \otimes Y ))\\
		& \Iso T_{kg}(\1_{(kg)^{-1}} \otimes X) \otimes T_{kg}( \1_{(kg)^{-1}} \otimes T_{g^{-1}h} (\1_{h^{-1}g} \otimes Y ))\\
		\overset{(III)}&{\Iso} T_{kg}(\1_{(kg)^{-1}} \otimes X \otimes T_{g^{-1}h} (\1_{h^{-1}g} \otimes Y ))\\
		& = \Phi (X \otimes T_{g^{-1}h} (\1_{h^{-1}g} \otimes \otimes Y ))(kg)\\
		& = \mathcal{T}_g (\Phi (X \otimes T_{g^{-1}h} (\1_{h^{-1}g} \otimes \otimes Y )))(k).
	\end{align*}
	The first two and the last two equalities come from the immediate application of the definitions. The unadorned isomorphisms are related to the fact that the functors defining the partial action are monoidal, and then they associate the respective monoidal unities. The natural isomorphism (I) is given by $\text{Id}_{T_{kg}(\1_{(kg)^{-1}} \otimes X)} \otimes J^{kh}_{\1_{h^{-1}g}\otimes \1_{(kh)^{-1}} ,\1_{(kh)^{-1}} \otimes Y}$. The natural isomorphism (II) is nothing but $\text{Id}_{T_{kg}(\1_{(kg)^{-1}} \otimes X)} \otimes (\gamma_{kg,g^{-1}h})^{-1}_{\1_{h^{-1}g} \otimes \1_{(kh)^{-1}} \otimes Y}$. The natural isomorphism (III), in its turn, corresponds to $J^{kg}_{\1_{(kg)^{-1}} \otimes X , \1_{(kg)^{-1}} \otimes T_{g^{-1}h}(\1_{h^{-1}g} \otimes Y)}$
	.
\end{proof}

Now, consider the category $\widehat{\mathcal{C}}$ to be the least monoidal subcategory of $[G, \mathcal{C}]$ generated by the objects $\mathcal{T}_g (\Phi (X))$, for $g\in G$ and $X\in \mathcal{C}$. Lemma \ref{lemaglobalizacao} shows that the objects of the category $\widehat{\mathcal{C}}$ are functors naturally isomorphic to $\mathcal{T}_g (\Phi (X))$, for some $g\in G$ and $X\in \mathcal{C}$. The morphisms in $\widehat{\mathcal{C}}$ are the natural transformations $\alpha_f :\mathcal{T}_g (\Phi (X)) \Rightarrow \mathcal{T}_g (\Phi (Y))$ induced by morphisms $f:X\rightarrow Y$ in the category $\mathcal{C}$. Also, from item (i)  of Lemma \ref{lemaglobalizacao}, we conclude that the image of the functor $\Phi :\mathcal{C} \rightarrow \widehat{\mathcal{C}}$ defines an ideal in $\widehat{\mathcal{C}}$.

\begin{Theorem}
	The functor $\Phi :\mathcal{C} \rightarrow \Phi (\mathcal{C})\subseteq \widehat{\mathcal{C}}$ defines an isomorphism between the partial action 
	\[
	\left( \left\{ \mathcal{C}_g \right\}_{g\in G} , \left\{ T_g :\mathcal{C}_{g^{-1}} \rightarrow \mathcal{C}_g \right\}_{g\in G} \left\{ \gamma_{g,h} \right\}_{g,h \in G} \right)
	\]
	on $\mathcal{C}$ and the partial action 
	\[
	\left( \left\{ \widehat{\mathcal{C}}_g =\Phi (\mathcal{C}) \cap \mathcal{T}_g (\Phi (\mathcal{C})) \right\}_{g\in G} , \left\{ \mathcal{T}_g|_{\widehat{\mathcal{C}}_{g^{-1}}} :\widehat{\mathcal{C}}_{g^{-1}} \rightarrow \widehat{\mathcal{C}}_g \right\}_{g\in G} \left\{ \Gamma_{g,h} \right\}_{g,h \in G} \right)
	\]
	on $\widehat{\mathcal{C}}$.
\end{Theorem}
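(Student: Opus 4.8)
The plan is to realise $\Phi$, together with a suitable family $\{\tau_g\}_{g\in G}$, as a \emph{morphism of partial actions} in the sense of \cref{morph-pact-defn} whose underlying functor is an equivalence onto its image, which is exactly what ``isomorphism of partial actions'' means here. Two structural simplifications guide everything. First, $\underline{G}$ is a discrete category (its only morphisms are identities), so $[G,\C]\cong\prod_{g\in G}\C$ and a morphism in $[G,\C]$ is nothing but a $G$-indexed family of morphisms of $\C$; hence every ``naturality in the $G$-variable'' is vacuous and the only naturality to be checked is naturality in the $\C$-variable. Second, $\mathcal{T}_g\mathcal{T}_h=\mathcal{T}_{gh}$ holds strictly, so $\Gamma_{g,h}$ and the unit $u'$ of the global action $\mathcal{T}$ are identities, which collapses the target side of each coherence diagram.

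First I would fix the semigroupal structure of $\Phi$. Its constraint $J^\Phi_{X,Y}\colon\Phi(X)\bullet\Phi(Y)\iiso\Phi(X\ot Y)$ is the family, indexed by $h\in G$, of the isomorphisms $T_h(\1_{h^{-1}}\ot X)\ot T_h(\1_{h^{-1}}\ot Y)\iiso T_h(\1_{h^{-1}}\ot X\ot Y)$ already displayed before the theorem; these are built from the monoidal constraint $J^h$ of $T_h$ and from the fusion and exchange isomorphisms of the central idempotent $\1_{h^{-1}}$. Naturality in $X$ and $Y$ follows from naturality of $J^h$ and of the exchange isomorphism, and the hexagon \cref{hex-J-a} reduces componentwise to the monoidality hexagon for $T_h$ together with the idempotent coherences \cref{idempotent1,idempotent3,idempotent4}. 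Combined with the injectivity on objects, faithfulness, and the description of the morphisms of $\widehat{\C}$ between objects of $\Phi(\C)$ recalled in the text, this exhibits $\Phi\colon\C\to\Phi(\C)$ as a monoidal equivalence, injective on objects and fully faithful, onto the ideal $\Phi(\C)=\widehat{\C}_e$ of $\widehat{\C}$.

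Next I would construct $\tau_g$ and check that $\Phi$ matches the ideals. For $X\in\C_{g^{-1}}$, the $h$-component of $(\tau_g)_X$ is an isomorphism
\[
\Phi(T_g(X))(h)=T_h(\1_{h^{-1}}\ot T_g(X))\iiso T_{hg}(\1_{(hg)^{-1}}\ot X)=\mathcal{T}_g(\Phi(X))(h),
\]
obtained by the same kind of chain as in \cref{lemaglobalizacao}: use $X\Iso\1_{g^{-1}}\ot X$ to land in the domain of $\gamma_{h,g}$, apply $(\gamma_{h,g})^{-1}$ to pass from $T_{hg}$ to $T_hT_g$, split off $T_g(\1_{g^{-1}}\ot\1_{(hg)^{-1}})\Iso\1_g\ot\1_{h^{-1}}$ by monoidality of $T_g$ and \cref{lemadasunidades}, and contract the remaining idempotents. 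Naturality in $X$ is inherited from that of $\gamma_{h,g}$, $J^g$, $J^h$ and the idempotent structure. The ideal correspondence is then immediate: for $X\in\C_g$ one writes $X\Iso T_g(\1_{g^{-1}}\ot Z)$ with $Z\in\C_{g^{-1}}$, possible by \cref{T_g(D_(g-inv))-sst-D_g}, whence $\Phi(X)\Iso\Phi(T_g(Z))\Iso\mathcal{T}_g(\Phi(Z))\in\mathcal{T}_g(\Phi(\C))$, so $\Phi(X)\in\Phi(\C)\cap\mathcal{T}_g(\Phi(\C))=\widehat{\C}_g$; conversely, any object of $\widehat{\C}_g$ is simultaneously of the form $\Phi(X)$ and $\mathcal{T}_g(\Phi(Y))$, and evaluating the isomorphism between them at $e\in G$ gives $X\Iso T_g(\1_{g^{-1}}\ot Y)\in\C_g$. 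Hence $\Phi$ restricts to an equivalence $\C_g\to\widehat{\C}_g$ for every $g$.

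Finally, since $\Gamma_{g,h}=\id$ and $\mathcal{T}_{gh}=\mathcal{T}_g\mathcal{T}_h$, the pentagon \cref{pent-tau-gamma} reduces to the equality $\mathcal{T}_g(\tau_h)\circ(\tau_g)_{T_h}=\tau_{gh}\circ\Phi(\gamma_{g,h})$ of natural isomorphisms $\Phi T_gT_h\iiso\mathcal{T}_{gh}\Phi$ over $\C_{h^{-1}}\cap\C_{(gh)^{-1}}$; componentwise at each $k\in G$ this is a diagram in $\C$ that commutes by the associativity pentagon \cref{square-T-gamma} for $\gamma$ and by the $\varphi$-coherence of \cref{composicaodosphi}, once the constraints $J^g,J^h,J^{gh}$ and the idempotent diagrams are inserted. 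The triangle \cref{tri-tau-u} is then automatic for any morphism of partial actions. Because $\Phi\colon\C\to\Phi(\C)$ is an equivalence and every $\tau_g$ is a natural isomorphism, the morphism $(\Phi,\{\tau_g\})$ is invertible, i.e.\ an isomorphism of partial actions, as claimed. I expect the genuine obstacle to be precisely this last step: ordering the several occurrences of $\gamma$, the unit isomorphisms $\varphi^g$, and the idempotent coherences \cref{idempotent1,idempotent2,idempotent3,idempotent4} so that the two composites in \cref{pent-tau-gamma} literally coincide. The conceptual simplifications above confine all of this to the source side, where it is the same bookkeeping already carried out in \cref{lemaglobalizacao,composicaodosphi}.
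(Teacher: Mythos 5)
Your proposal follows essentially the same route as the paper's proof: the same intertwiner $\tau_g$ assembled from $\gamma_{k,g}$, the constraints $J^g$ and the unit isomorphisms $\varphi^g$ of \cref{lemadasunidades}, the same identification $\Phi(\C_g)=\widehat{\C}_g$, and the same ingredients (\cref{square-T-gamma}, \cref{pent-gamma-jota}, \cref{composicaodosphi}, naturality of the $\gamma$'s, absorption of units) for the pentagon \cref{pent-tau-gamma}, which the paper verifies by exactly the long chain of equalities you correctly flag as the main bookkeeping. Your one genuine local improvement is the converse ideal inclusion: evaluating the isomorphism $\Phi(X)\cong\mathcal{T}_g(\Phi(Y))$ at $e\in G$ to get $X\cong T_g(\1_{g^{-1}}\otimes Y)$ and invoking isomorphism-closedness of $\C_g$ is shorter than the paper's detour through full faithfulness of $T_{hg}$ and the derived isomorphism $X\cong X\otimes\1_g$, but it does not alter the overall structure of the argument.
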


\begin{proof}
	Let $X\in \mathcal{C}_g$, then, for each $h\in G$
	\begin{align*}
		\Phi (X)(h)
		& = T_h (X\otimes \1_{h^{-1}}) \\
		& \Iso T_h (X\otimes \1_g \otimes \1_{h^{-1}}) \\
		& \Iso T_h (T_g (T_{g^{-1}}(X)) \otimes \1_g \otimes \1_{h^{-1}}) \\
		& \Iso T_h (T_g (T_{g^{-1}}(X) \otimes \1_{g^{-1}}\otimes \1_{(hg)^{-1}})) \\
		& \Iso T_{hg} (T_{g^{-1}} (X) \otimes \1_{(hg)^{-1}})\\
		& = \Phi (T_{g^{-1}}(X) )(hg) \\ 
		& =\mathcal{T}_g (\Phi (T_{g^{-1}}(X)))(h).
	\end{align*}
	Therefore, $\Phi (X) \in \Phi (\mathcal{C})\cap \mathcal{T}_g (\Phi (\mathcal{C}))$.
	
	On the other hand, let $\Phi (X) \in \Phi (\mathcal{C}) \cap \mathcal{T}_g (\Phi (\mathcal{C}))$, then, there is an object $Y\in \mathcal{C}$ such that $\Phi (X) \Iso \mathcal{T}_g (\Phi (Y))$. For every element $h\in G$, we have
	\[
	T_h (X\otimes \1_{h^{-1}}) \Iso T_{hg} (Y \otimes \1_{(hg)^{-1}}) ,
	\]
	then
	\begin{align*}
		T_{hg} (Y \otimes \1_{(hg)^{-1}})
		& \Iso  T_h (X\otimes \1_{h^{-1}}) \otimes \1_h \otimes \1_{hg} \\
		& \Iso T_h (X \otimes \1_g \otimes \1_{h^{-1}}) \\
		& \Iso T_h (T_g (T_{g^{-1}}(X\otimes \1_g)) \otimes \1_g \otimes \1_{h^{-1}}) \\
		& \Iso T_h (T_g (T_{g^{-1}}(X\otimes \1_g)) \otimes T_g( \1_{g^{-1}} \otimes \1_{(hg)^{-1}})) \\
		& \Iso T_h (T_g (T_{g^{-1}}(X\otimes \1_g) \otimes  \1_{g^{-1}} \otimes \1_{(hg)^{-1}}) )\\
		& \Iso T_{hg} (T_{g^{-1}}(X\otimes \1_g) \otimes \1_{(hg)^{-1}}) .
	\end{align*}
	As $T_{hg}:\mathcal{C}_{(hg)^{-1}}\rightarrow \mathcal{C}_{hg}$ is a monoidal categorical equivalence, we conclude that 
	\[
	Y\otimes \1_{(hg)^{-1}} \cong T_{g^{-1}}(X\otimes \1_g) \otimes 1_{(hg)^{-1}},
	\]
	and then,  using the left unit isomorphism in $\mathcal{C}_{(hg)^{-1}}$,we obtain the isomorphism 
	$Y\Iso T_{g^{-1}}(X\otimes \1_g)$, that is,
	\[
	\Phi (X) \Iso \mathcal{T}_g (\Phi (T_{g^{-1}}(X\otimes \1_g))) .
	\]
	Applying this natural isomorphism at $e\in G$, we have
	\[
	X\Iso T_g (T_{g^{-1}}(X\otimes \1_g)) ,
	\]
	implying that $X\Iso X\otimes \1_g$. Therefore $X\in \mathcal{C}_g$.
	
	Let us check that the functor $\Phi :\mathcal{C} \rightarrow \widehat{\mathcal{C}}$ intertwines the partial actions. Consider an object $X\in \mathcal{C}_{g^{-1}}$, then
	\begin{align*}
		\Phi (T_g (X))(k)
		& = T_k (T_g (X) \otimes \1_{k^{-1}}) \\
		& \cong T_k (T_g (X) \otimes \1_g \otimes \1_{k^{-1}}) \\
		& \cong T_k (T_g (X) \otimes T_g( \1_{g^{-1}} \otimes \1_{(kg)^{-1}})) \\
		& \cong T_k (T_g (X \otimes  \1_{g^{-1}} \otimes \1_{(kg)^{-1}})) \\
		& \cong T_{kg} (X \otimes  \1_{g^{-1}} \otimes \1_{(kg)^{-1}}) \\
		& \cong T_{kg} (X \otimes \1_{(kg)^{-1}}) \\
		& = \Phi (X)(kg) \\
		& = \mathcal{T}_g (\Phi (X))(k) .
	\end{align*}
	From this sequence of isomorphisms, one can define, then, the natural isomorphism of functors in $[G,\mathcal{C}]$ 
	\[
	\left(\tau_g\right)_X :\Phi (T_g (X))\Rightarrow \mathcal{T}_g (\Phi (X)) , 
	\]
	which, localized in $k\in G$ is given by
	\[
	\left( \left(\tau_g\right)_X \right)_k = \left( \gamma_{k,g}\right)_{X\otimes \1_{g^{-1}} \otimes \1_{(kg)^{-1}}} \circ T_k (J^g_{X,\1_{g^{-1}} \otimes \1_{(kg)^{-1}}}) \circ T_k (T_g (X) \otimes \varphi^g_{k^{-1}} ) , 
	\]
	where the morphism $\varphi^g_{k^{-1}} :\1_{g} \otimes \1_{k^{-1}} \rightarrow T_g (\1_{g^{-1}}\otimes \1_{(kg)^{-1}})$ is the morphism defined in Lemma \ref{lemadasunidades}. It is easy to see that $\tau_g :\Phi \circ T_g \Rightarrow \mathcal{T}_g \circ \Phi$ is natural in $X$. It remains to show that the family of natural transformations $\tau_g$ satisfies the diagram (\ref{pent-tau-gamma}). Consider elements $g,h,k\in G$ and an object $X\in \mathcal{C}_{h^{-1}} \cap \mathcal{C}_{(gh)^{-1}}$, then (\ref{pent-tau-gamma}) writes
	\begin{center}
		\begin{tikzpicture}[xscale = 1.5]
		\path
		(0,0) node (1) {\(\mathcal{T}_{g} \mathcal{T}_{h} \Phi(X) (k)\)}
		(2,0) node (2) {\(\mathcal{T}_{gh} \Phi(X) (k)\)}
		(0,1) node (3) {\(\mathcal{T}_{g} \Phi(T_{h}(X)) (k)\)}
		(2,1) node (4) {\(\Phi(T_{gh}(X)) (k)\)}
		(1,2) node (5) {\(\Phi(T_{g} T_{h}(X)) (k)\)}
		[mor]
		(5) edge node [left =1em] {\(\left( \left( \tau_g \right)_{T_{h}(X)} \right)_{k}\)} (3)
		(5) edge node [right] {\(\Phi \left( \left(\gamma_{g,h} \right)_{X} \right)_{k}\)} (4)
		(3) edge node [left ] {\( \left( \left( \mathcal{T}_{g} \tau_{h} \right)_{X} \right)_{k} \)} (1)
		(4) edge node [right] {\( \left( \left( \tau_{gh} \right)_{X} \right)_{k} \)} (2)
		(1) edge node [below] {\( \left( \Gamma_{g,h} \Phi (X) \right)_{k} \)} (2)
		;
		\end{tikzpicture}
	\end{center}
	
	Translating carefully the functor $\Phi (X)$ and considering that the natural transformations $\Gamma_{g,h}$ are the identity in $[G, \mathcal{C}]$, we end up with the following square:
	\begin{center}
		\begin{tikzpicture}[xscale = 3.5]
		\path
		(0,0) node (1) {\( T_{kg}(T_h (X) \otimes \1_{(kg)^{-1}}) \)}
		(1,0) node (2) {\( T_{kgh} (X\otimes \1_{(kgh)^{-1}}) \)}
		(0,1) node (3) {\( T_k(T_g T_h (X) \otimes \1_{k^{-1}}) \)}
		(1,1) node (4) {\( T_k(T_{gh}(X) \otimes \1_{k^{-1}}) \)}
		[mor]
		(1) edge node [below] {\( ((\tau_h)_X)_{kg} \)} (2)
		(3) edge node [left ] {\( ((\tau_g)_{T_h(X)})_k \)} (1)
		edge node [above] {\( T_k((\gamma_{g,h})_X \otimes \1_{k^{-1}}) \)} (4)
		(4) edge node [right] {\( ((\tau_{gh})_X)_k \)} (2)
		;
		\end{tikzpicture}
	\end{center}
	Let us keep in mind that $X\in \mathcal{C}_{h^{-1}} \cap \mathcal{C}_{(gh)^{-1}}$, then $X\cong X\otimes \1_{h^{-1}} \otimes \1_{(gh)^{-1}}$, and $T_g T_h(X) \cong T_g T_h (X) \otimes \1_g \otimes \1_{gh}$. Then, we have
	\begin{align*}
		\mathrlap{\hspace*{-1em}\left( \left( \tau_{gh}\right)_X \right)_k \circ T_k \left( \left( \gamma_{g,h} \right)_X \otimes \1_{k^{-1}} \right) \cong \left( \left( \tau_{gh}\right)_X \right)_k \circ T_k \left( \left( \gamma_{g,h} \right)_X \otimes \1_g \otimes \1_{gh} \otimes \1_{k^{-1}} \right) }\\
		\quad
		& = \left( \gamma_{k,gh} \right)_{X\otimes \1_{h^{-1}}\otimes \1_{(gh)^{-1}} \otimes \1_{(kgh)^{-1}}} \circ T_k \left[ J^{gh}_{X,\1_{h^{-1}}\otimes \1_{(gh)^{-1}} \otimes \1_{(kgh)^{-1}}}  \right. \\ 
		& \qquad \left. \circ \left( T_{gh}(X) \otimes \varphi^{gh}_{g,k^{-1}} \right)  \circ \left( \left( \gamma_{g,h} \right)_X \otimes \1_g \otimes \1_{gh} \otimes \1_{k^{-1}} \right) \right] \\
		\overset{(I)}&= \left( \gamma_{k,gh} \right)_{X\otimes \1_{h^{-1}}\otimes \1_{(gh)^{-1}} \otimes \1_{(kgh)^{-1}}} \circ T_k \left[ J^{gh}_{X,\1_{h^{-1}}\otimes \1_{(gh)^{-1}} \otimes \1_{(kgh)^{-1}}} \right. \\
		& \qquad \left. \circ \left( \left( \gamma_{g,h} \right)_X \otimes T_{gh}( \1_{h^{-1}} \otimes \1_{(gh)^{-1}} \otimes \1_{(kgh)^{-1}}) \right) \circ \left( T_g T_h (X) \otimes \varphi^{gh}_{g,k^{-1}} \right) \right] \\
		\overset{(II)}&= \left( \gamma_{k,gh} \right)_{X\otimes \1_{h^{-1}}\otimes \1_{(gh)^{-1}} \otimes \1_{(kgh)^{-1}}} \circ T_k \left[ J^{gh}_{X,\1_{h^{-1}}\otimes \1_{(gh)^{-1}} \otimes \1_{(kgh)^{-1}}} \circ \left( \left( \gamma_{g,h} \right)_X \otimes \left( \gamma_{g,h} \right)_{\1_{h^{-1}} \otimes \1_{(gh)^{-1}} \otimes \1_{(kgh)^{-1}}} \right) 
		\right. \\
		& \qquad \left.  \left( T_g T_h (X) \otimes T_g (\varphi^h_{g^{-1}, (kg)^{-1}})\right) \circ \left( T_g T_h (X) \otimes \varphi^g_{gh,k^{-1}} \right) \right] \\
		\overset{(III)}&= \left( \gamma_{k,gh} \right)_{X\otimes \1_{h^{-1}}\otimes \1_{(gh)^{-1}} \otimes \1_{(kgh)^{-1}}} \circ T_k \left( \left( \gamma_{g,h} \right)_{X\otimes \1_{h^{-1}}\otimes \1_{(gh)^{-1}} \otimes \1_{(kgh)^{-1}}} \right)  \circ T_k \left[ T_g \left(  J^h_{X,\1_{h^{-1}}\otimes \1_{(gh)^{-1}} \otimes \1_{(kgh)^{-1}}} \right) \right. \\
		& \qquad \left. \circ J^g_{T_h(X), T_h (\1_{h^{-1}}\otimes \1_{(gh)^{-1}} \otimes \1_{(kgh)^{-1}})} \circ \left( T_g T_h (X) \otimes T_g (\varphi^h_{g^{-1}, (kg)^{-1}})\right) \circ \left( T_g T_h (X) \otimes \varphi^g_{gh,k^{-1}} \right) 
		\right] \\
		\overset{(IV)}&= \left( \gamma_{kg,h} \right)_{X\otimes \1_{h^{-1}}\otimes \1_{(gh)^{-1}} \otimes \1_{(kgh)^{-1}}} \circ  \left( \gamma_{k,g} \right)_{T_h (X\otimes \1_{h^{-1}}\otimes \1_{(gh)^{-1}} \otimes \1_{(kgh)^{-1}})} \circ T_k T_g \left( J^h_{X,\1_{h^{-1}}\otimes \1_{(gh)^{-1}} \otimes \1_{(kgh)^{-1}}}  \right) \\
		&\qquad \circ T_k \left[  J^g_{T_h(X), T_h (\1_{h^{-1}}\otimes \1_{(gh)^{-1}}  \otimes \1_{(kgh)^{-1}})}  \circ  \left( T_g T_h (X) \otimes T_g (\varphi^h_{g^{-1}, (kg)^{-1}})\right) \circ \left( T_g T_h (X) \otimes \varphi^g_{gh,k^{-1}} \right)  \right] \\
		\overset{(V)}&= \left( \gamma_{kg,h} \right)_{X\otimes \1_{h^{-1}}\otimes \1_{(gh)^{-1}} \otimes \1_{(kgh)^{-1}}} \circ
		T_{kg}\left( J^h_{X,\1_{h^{-1}}\otimes \1_{(gh)^{-1}} \otimes \1_{(kgh)^{-1}}} \right) \circ  \left( \gamma_{k,g} \right)_{T_h (X)\otimes T_h(\1_{h^{-1}}\otimes \1_{(gh)^{-1}} \otimes \1_{(kgh)^{-1}})} \\
		& \qquad \circ T_k \left[  T_g \left(   T_h (X) \otimes \varphi^h_{g^{-1}, (kg)^{-1}}\right)  \circ J^g_{T_h(X), \1_{h}\otimes \1_{g^{-1}} \otimes \1_{(kg)^{-1}}} \circ  \left( T_g T_h (X) \otimes \varphi^g_{gh,k^{-1}} \right) \right] \\
		\overset{(VI)}&= \left( \gamma_{kg,h} \right)_{X\otimes \1_{h^{-1}}\otimes \1_{(gh)^{-1}} \otimes \1_{(kgh)^{-1}}} \circ
		T_{kg}\left( J^h_{X,\1_{h^{-1}}\otimes \1_{(gh)^{-1}} \otimes \1_{(kgh)^{-1}}} \right) \circ T_{kg} \left( T_h (X) \otimes \varphi^h_{g^{-1}, (kg)^{-1}} \right) \\
		& \qquad \circ \left( \gamma_{k,g} \right)_{T_h (X)\otimes \1_{h}\otimes \1_{g^{-1}} \otimes \1_{(kg)^{-1}}} \circ T_k \left( J^g_{T_h(X), \1_{h}\otimes \1_{g^{-1}} \otimes \1_{(kg)^{-1}}} \right) \circ T_k \left( T_g T_h (X) \otimes \varphi^g_{gh,k^{-1}} \right) \\
		\overset{(VII)}&= \left( \gamma_{kg,h} \right)_{X\otimes \1_{h^{-1}} \otimes \1_{(kgh)^{-1}}} \circ
		T_{kg}\left( J^h_{X,\1_{h^{-1}} \otimes \1_{(kgh)^{-1}}} \right) \circ T_{kg} \left( T_h (X) \otimes 
		\varphi^h_{(kg)^{-1}} \right) \\
		& \qquad \circ \left( \gamma_{k,g} \right)_{T_h (X)\otimes \1_{g^{-1}} \otimes \1_{(kg)^{-1}}} \circ T_k \left( J^g_{T_h(X), \1_{g^{-1}} \otimes \1_{(kg)^{-1}}} \right) \circ T_k \left( T_g T_h (X) \otimes \varphi^g_{gh,k^{-1}} \right) \\
		& = \left( \left( \tau_h \right)_X \right)_{kg} \circ \left( \left( \tau_g \right)_{T_h(X)} \right)_k .
	\end{align*}
	Equality (I) follows from the naturality of $\gamma_{g,h}$. For equality (II), we used Lemma \ref{composicaodosphi} and for (III) an instance of diagram (\ref{pent-gamma-jota}). Identity (IV) follows from (\ref{square-T-gamma}), (V) and (VI) are nothing else than the naturality of $\gamma_{k,g}$. Finally (VII) comes from absorption of the monoidal units, recalling that $X\cong X\otimes \1_{(gh)^{-1}}$ and $T_h (X) \cong T_h (X) \otimes \1_h$. 
	
	Therefore, the functor $\Phi :\mathcal{C} \rightarrow \widehat{\mathcal{C}}$ defines an isomorphism between partial actions  
	\[
	T= \left( \{ \mathcal{C}_g =\overline{\1_g \otimes \mathcal{C}} \}_{g\in G} , \{ T_g :\mathcal{C}_{g^{-1}} \rightarrow \mathcal{C}_g \}_{g\in G} , \{ \gamma_{g,h} \}_{g,h\in G}  \right)
	\]
	and 
	\[
	\mathcal{T}= \left( \{ \widehat{\mathcal{C}}_g =\widehat{\mathcal{C}} \cap \mathcal{T}_g (\widehat{\mathcal{C}}) \}_{g\in G} , \{ \mathcal{T}_g|_{\widehat{\mathcal{C}}_{g^{-1}}} :\widehat{\mathcal{C}}_{g^{-1}} \rightarrow \widehat{\mathcal{C}}_g \}_{g\in G} , \{ \Gamma_{g,h} \}_{g,h\in G}  \right).
	\]
\end{proof}

\begin{Definition}
	Given a partial action $\left( \{ \mathcal{C}_g =\overline{\1_g \otimes \mathcal{C}} \}_{g\in G} , \{ T_g :\mathcal{C}_{g^{-1}} \rightarrow \mathcal{C}_g \}_{g\in G} , \{ \gamma_{g,h} \}_{g,h\in G}  \right)$ of a group $G$ on a monoidal category $\mathcal{C}$ defined by a family of central idempotent objects $\{ \1_g \}_{g\in G}$, we call the category $\widehat{\mathcal{C}} \subseteq [G, \mathcal{C}]$ with the global action $\mathcal{T} :G \rightarrow \text{Aut}_{\otimes} (\widehat{\mathcal{C}})$ previously defined, the {\emph standard globalization of} $\mathcal{C}$.
\end{Definition}

\section{Partial \textit{G}-equivariantization and partial smash product}

Recall the definition of \(G\)-invariants by an action from \cite{Tambara}:

\begin{Definition}  Given \((\{T_g\}_{g\in G},\{\g_{g,h}\}_{g,h\in G},u)\) a (global) action of \(G\) on a monoidal category \(\C\), the \(G\)-invariants in \(\C\) (or $G$-equivariantization), denoted by \(\C^G\), is the category given as follows:
	\begin{itemize}
		\item the objects are pairs \((X, \theta)\), where \(X \in  (\C)^{(0)}\) and \(\theta\) is a family of isomorphisms \(\theta_g \colon T_g(X) \longto X\) in \(\C\), such that the following diagrams commute, 
		\begin{equation}\label{quadradoequivariant}
			\begin{tikzpicture}[xscale=1.5]
			\path
			(0,0) node (P0) {\(T_{gh}(X)\)}
			(1,0) node (P1) {\( X\)}
			(1,1) node (P2) {\( T_g(X) \)}
			(0,1) node (P3) {\(T_g T_h (X)\)}
			[mor]
			(P3) edge node [above] {\(T_g(\theta_h)\) } (P2)
			(P3) edge node [left ] {\((\g_{g, h})_X\)} (P0)
			(P0) edge node [above] {\(\theta_{gh}\) } (P1)
			(P2) edge node [right] {\(\theta_g\)} (P1)
			;
			\end{tikzpicture},
		\end{equation}
		for every $g,h\in G$, and 
		\begin{equation}\label{trianguloequivariant}
			\begin{tikzpicture}[xscale=1.5]
			\path
			(1,0) node (P1) {\( X\)}
			(1,1) node (P2) {\( T_e(X) \)}
			(0,1) node (P3) {\( X\)}
			[mor]
			(P3) edge node [above] {\( u_X \) } (P2)
			(P2) edge node [right] {\(\theta_e\)} (P1)
			[igual]
			(P3) edge (P1)
			;
			\end{tikzpicture}.
		\end{equation}
		\item a morphism \( \varphi: (X,\theta) \longto (X',\theta')\) in \(\C^G\) is a morphism \(\varphi \colon X \longto X'\) in \(\C\) such that, for all \(g \in G\), the following diagram commutes
		\begin{center}
			\begin{tikzpicture}[xscale=1.5]
			\path
			(0,0) node (P0) {\(T_g (X')\)}
			(1,0) node (P1) {\(X'\)}
			(1,1) node (P2) {\(X\)}
			(0,1) node (P3) {\(T_g(X)\)}
			[mor]
			(P3) edge node [above] {\(\theta_g\)} (P2)
			(P2) edge node [right] {\(\varphi\)} (P1)
			(P3) edge node [left ] {\(T_g(\varphi)\)} (P0)
			(P0) edge node [above] {\(\theta'_g\)} (P1)
			;
			\end{tikzpicture}.
		\end{center}
	\end{itemize}
	
	The category \(\C^G\) is a monoidal category with product given by
	\[
	(X, \theta)\widehat{\otimes}(X', \theta') = ( X \otimes X', \widetilde{\theta}),
	\]
	where
	\[
	\widetilde{\theta}_g = ( \theta_g \otimes \theta'_g ) \circ (J^g_{X,X'})^{-1}
	\]
	and the unity object is \((\mathbbm{1}, J^0)\), and the associativity and unity isomorphism are inherited from \(\C\).
\end{Definition}

\subsection{Partial \textit{G}-equivariantization}

\def\cG{{\ensuremath{\C^{G}}}}

\begin{Definition}
	Let $T=(\{T_g\}_{g\in G},\{\g_{g,h}\}_{g,h\in G},u)$ be a partial action of a group $G$ on a semigroupal category $\C$.
	\begin{enumerate}
		\item
		A partially $G$-equivariant object in $\C$ is a pair $ ( X, \{\sigma_g^X\}_{g \in G} ) $, where 
		\[\sigma_g^X \colon T_g(X \otimes \esp) \implies X \otimes T_g(\esp)\colon \C_{g^{-1}} \longrightarrow \C_g\]
		is a family of natural isomorphisms such that the following diagram commutes
		\begin{equation}\label{pent-sigma}
			\begin{tikzpicture}[xscale=1.5]
			\path
			( 0, 0) node (P1) {$T_gT_h(X \otimes Y)$}
			(-1,-1) node (P2) {$T_g(X \otimes  T_h(Y))$}
			(-1,-2) node (P3) {$X \otimes T_gT_h(Y)$}
			( 1,-2) node (P4) {$X \otimes T_{gh}(Y)$}
			( 1,-1) node (P5) {$T_{gh}(X \otimes Y)$}
			[mor]
			(P1) edge node[left ,xshift=-1ex] {$T_g\left(\sigma_h^{X}\right)_Y$}        (P2)
			(P2) edge node[left             ] {$\left(\sigma_{g}^{X}\right)_{T_h(Y)}$}  (P3)
			(P3) edge node[below            ] {$X \otimes \left(\gamma_{g,h}\right)_Y$} (P4)
			(P1) edge node[right,xshift= 1ex] {$(\gamma_{g,h})_{X \otimes Y}$}          (P5)
			(P5) edge node[right            ] {$\left(\sigma_{gh}^{X}\right)_Y$}        (P4)
			;
			\end{tikzpicture},
		\end{equation}
		for all \(Y \in \Chi \cap \Cghi\) and, for all $Y\in \mathcal{C}$,
		\begin{equation}\label{triangle-sigma}
			\begin{tikzpicture}
			\path
			node (A) at (0, 0) {$X \otimes Y$}
			node (B) at (2, 0) {$T_e(X \otimes Y)$}
			node (C) at (1,-1) {$X \otimes T_e(Y)$}
			[mor]
			(A) edge node [left ] {$X \otimes u_Y$} (C)
			(A) edge node [above] {$u_{X \otimes Y}$} (B)
			(B) edge node [right] {$(\sigma_e^X)_Y$}(C)
			;
			\end{tikzpicture}.
		\end{equation}
		\item
		A morphism $ ( X, \sigma^X ) \longto ( Y, \sigma^Y ) $ between two partially $G$-equivariant objects in $\C$ is a morphism $\varphi\colon X \longto Y$ in $\C$ such that the following diagram commutes
		\begin{equation}\label{eq:G-equiv-mor}
			\begin{tikzpicture}[xscale=2]
			\path
			node (P1) at (0, 0) {$T_g(X \otimes \esp)$}
			node (P2) at (0,-1) {$T_g(Y \otimes \esp)$}
			node (P3) at (1,-1) {$Y \otimes T_g(\esp)$}
			node (P4) at (1, 0) {$X \otimes T_g(\esp)$}
			[nat]
			(P1)
			edge node [left ] {\( T_g (\varphi \otimes \esp ) \)} (P2)
			edge node [above] {\( \sigma_g^X \)} (P4)
			(P2) edge node [below] {\( \sigma_g^Y \)} (P3)
			(P4) edge node [right] {\( \varphi \otimes T_g ( \esp ) \)} (P3)
			;
			\end{tikzpicture}
		\end{equation}
	\end{enumerate}
	Then, we have the category $\mathcal{C}^{\underline{G}}$ of partial \(G\)-equivariantizations.
\end{Definition}

For the case of a partial action generated by central idempotents, we have a simpler way to obtain partially equivariant objects.

\begin{Lemma}
	Let $T= \left( \{ \mathcal{C}_g =\overline{\1_g \otimes \mathcal{C}} \}_{g\in G} , \{ T_g :\mathcal{C}_{g^{-1}} \rightarrow \mathcal{C}_g \}_{g\in G} , \{ \gamma_{g,h} \}_{g,h\in G}  \right)$ be a partial action of a group $G$ on a monoidal category $\mathcal{C}$ defined by a family of central idempotent objects $\{ \1_g \}_{g\in G}$. Then, it is equivalent to define a partially $G$-equivariant object $ ( X, \{\sigma_g^X\}_{g\in G} ) $ by a family of morphisms  
	\[
	\left\{ \widetilde{\sigma}_{g, g_1 ,\ldots , g_n}^X \colon T_g \left( X \otimes \1_{g^{-1}} \otimes \1_{g_1} \otimes \cdots \otimes \1_{g_n}\right) \rightarrow X \otimes \1_g \otimes \1_{gg_1} \otimes \cdots \otimes \1_{gg_n} \vert n\in \mathbb{N} , g,g_1, \ldots g_n \in G \right\} ,
	\]
	being $\widetilde{\sigma}^X_e =u_X^{-1}:T_e (X) \rightarrow X$, 
	and such that, for all $g,h\in G$, the following diagram commutes :
	\begin{equation}\label{tildesigma}
		\begin{tikzpicture}[xscale=3.5]
		\path
		node (P0) at (0, 0) {$T_g(T_h(X \otimes \1_{h^{-1}} \otimes \1_{(gh)^{-1}}))$}
		node (P1) at (0,-1) {$T_{gh}(X \otimes \1_{h^{-1}} \otimes \1_{(gh)^{-1}})$}
		node (P2) at (1,-1) {$X  \otimes \1_{g} \otimes \1_{gh}$}
		node (P3) at (1,0) {$T_g(X  \otimes \1_{h} \otimes \1_{g^{-1}})$}
		[mor]
		(P0) edge node [left ] {$(\gamma_{g,h})_{X\otimes \1_{h^{-1}} \otimes \1_{(gh)^{-1}}}$} (P1)
		(P1) edge node [below] {$\widetilde{\sigma}^X_{gh,h^{-1}}$} (P2)
		(P0) edge node [above] {$T_g(\widetilde{\sigma}_{h, (gh)^{-1}}^X)$} (P3)
		(P3) edge node [right] {$\widetilde{\sigma}^X_{g,h}$} (P2)
		;
		\end{tikzpicture}
	\end{equation}
\end{Lemma}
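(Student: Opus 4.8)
The plan is to exhibit the asserted correspondence as a pair of mutually inverse constructions and then to match the two sets of coherence axioms. Write $E_{g;g_1,\dots,g_n}$ for the object $\1_{g^{-1}}\otimes\1_{g_1}\otimes\cdots\otimes\1_{g_n}\in\Cgi$. The decisive structural fact, coming from the partial action being generated by central idempotents, is that $\Cgi=\overline{\1_{g^{-1}}\otimes\C}$ is monoidal with unit $\1_{g^{-1}}$ (as established in \cref{sec:idemp-mcat}), so every $Y\in\Cgi$ satisfies $Y\cong\1_{g^{-1}}\otimes Y$ naturally, and the functors $T_g$ are monoidal with constraint $J^g$ and unit comparisons $\varphi^g$ furnished by \cref{lemadasunidades}.

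\textbf{Forward direction.} Given a partially $G$-equivariant structure $\{\sigma^X_g\}$, I would set $\widetilde\sigma^X_{g;g_1,\dots,g_n}$ to be the component $(\sigma^X_g)_{E_{g;g_1,\dots,g_n}}\colon T_g(X\otimes E_{g;g_1,\dots,g_n})\to X\otimes T_g(E_{g;g_1,\dots,g_n})$ post-composed with the canonical isomorphism $X\otimes T_g(E_{g;g_1,\dots,g_n})\cong X\otimes\1_g\otimes\1_{gg_1}\otimes\cdots\otimes\1_{gg_n}$. This last isomorphism is assembled from $J^g$ together with the iterated unit comparisons $\varphi^g$ of \cref{lemadasunidades}, which identify $T_g(\1_{g^{-1}}\otimes\1_{g_1}\otimes\cdots)$ with $\1_g\otimes\1_{gg_1}\otimes\cdots$. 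The normalization $\widetilde\sigma^X_e=u_X^{-1}$ is then read off from \eqref{triangle-sigma} at $Y=\1$, after absorbing the unit $\1_e=\1$.

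\textbf{Backward direction.} This is the substantive construction. For an arbitrary $Y\in\Cgi$ I would define $(\sigma^X_g)_Y$ as the composite
\[
T_g(X\otimes Y)\;\cong\;T_g(X\otimes\1_{g^{-1}})\otimes T_g(Y)\;\xrightarrow{\ \widetilde\sigma^X_g\otimes\id\ }\;(X\otimes\1_g)\otimes T_g(Y)\;\cong\;X\otimes T_g(Y),
\]
where the first isomorphism rewrites $Y\cong\1_{g^{-1}}\otimes Y$ and applies $J^g$ to split off the unit part, and the last one absorbs $\1_g$ into $T_g(Y)\in\Cg$. Since every object of $\Cgi$ is of this form and all three arrows are natural in $Y$ (naturality of $J^g$ and of the ideal unit isomorphisms), this yields a natural isomorphism $\sigma^X_g\colon T_g(X\otimes\esp)\Rightarrow X\otimes T_g(\esp)$ built from the single morphism $\widetilde\sigma^X_g$; evaluating it at $E_{g;g_1,\dots,g_n}$ returns the remaining members of the family, so the two packagings carry the same information.

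\textbf{Matching the axioms and main obstacle.} It remains to verify that \eqref{triangle-sigma} corresponds to $\widetilde\sigma^X_e=u_X^{-1}$ and that \eqref{pent-sigma} is equivalent to \eqref{tildesigma}; the former is immediate from the two formulas at $g=e$. For the pentagon, the backward construction shows that both legs of \eqref{pent-sigma} at a general $Y\in\Chi\cap\Cghi$ are determined, through $J^g,J^h,J^{gh}$ and the unit isomorphisms, by their values at the unit $\1_{h^{-1}}\otimes\1_{(gh)^{-1}}$ of $\Chi\cap\Cghi$, so it suffices to check \eqref{pent-sigma} on this single object. There the identifications $T_h(\1_{h^{-1}}\otimes\1_{(gh)^{-1}})\cong\1_h\otimes\1_{g^{-1}}$ and $T_{gh}(\1_{h^{-1}}\otimes\1_{(gh)^{-1}})\cong\1_{gh}\otimes\1_g$ collapse the five vertices of \eqref{pent-sigma} onto the four of \eqref{tildesigma}, the edge $X\otimes(\gamma_{g,h})_Y$ becoming a canonical identification precisely because $\gamma_{g,h}$ is compatible with the comparisons $\varphi$, which is the content of \cref{composicaodosphi}. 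I expect the main difficulty to be the disciplined bookkeeping of these canonical isomorphisms: proving that the backward-defined $\sigma^X_g$ is natural and independent of the factorization $Y\cong\1_{g^{-1}}\otimes Y$, and that the collapse of the pentagon onto the square invokes only \eqref{hex-J-a}, \eqref{pent-gamma-jota} and \cref{composicaodosphi} with no hidden coherence. Once this is in place the forward and backward assignments are manifestly mutually inverse, and the equivalence follows.
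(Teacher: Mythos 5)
Your proposal is correct and follows essentially the same route as the paper's proof: the forward assignment (evaluate $\sigma^X_g$ at $\1_{g^{-1}}\otimes\1_{g_1}\otimes\cdots\otimes\1_{g_n}$ and post-compose with $X\otimes(\varphi^g_{gg_1,\ldots,gg_n})^{-1}$), the backward assignment $(\sigma^X_g)_Y=\bigl(\widetilde{\sigma}^X_g\otimes T_g(Y)\bigr)\circ\bigl(J^g_{X\otimes\1_{g^{-1}},Y}\bigr)^{-1}$ with unit isomorphisms suppressed, and the verification ingredients (diagram (\ref{pent-gamma-jota}), Lemma \ref{composicaodosphi}, naturality of $\gamma_{g,h}$, of $J^g$ and of $\sigma^X_g$) all coincide with what the paper does. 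The only difference is presentational: you organize the pentagon check as a reduction to the unit object $\1_{h^{-1}}\otimes\1_{(gh)^{-1}}$ of $\C_{h^{-1}}\cap\C_{(gh)^{-1}}$, whereas the paper carries out the corresponding chain of equalities explicitly; the substance is identical.
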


\begin{proof}
	Given a partially equivariant object $ ( X, \{\sigma_g^X\}_{g\in G} ) $, define 
	\[
	\widetilde{\sigma}^X_{g, g_1, \ldots , g_n} =\left( X\otimes (\varphi^g_{g g_1, \ldots , g g_n})^{-1} \right) \circ \left( \sigma^X_{g} \right)_{\1_{g^{-1}}\otimes \1_{g_1} \otimes \cdots \otimes \1_{g_n}} .
	\]
	
	One can verify the commutativity of diagram \eqref{tildesigma}, as follows:
	
	For $g,h\in G$, we have
	\begin{center}
		\small
		\begin{tikzpicture}[xscale = 3.5]
		\path
		node (00) at (0, 0) {\( \Tgh ( X \otimes \uhi \otimes \ughi ) \)}
		node (10) at (1, 0) {\( X \otimes \Tgh ( \uhi \otimes \ughi ) \)}
		node (20) at (2, 0) {\( X \otimes \ug \otimes \ugh \)}
		node (11) at (1, 1) {\( X \otimes \Tg ( \Th ( \uhi \otimes \ughi ) ) \)}
		node (21) at (2, 1) {\( X \otimes \Tg ( \uh \otimes \ugi ) \)}
		node (02) at (0, 2) {\( \Tg ( \Th ( X \otimes \uhi \otimes \ughi ) ) \)}
		node (12) at (1, 2) {\( \Tg ( X \otimes \Th ( \uhi \otimes \ughi ) ) \)}
		node (22) at (2, 2) {\( \Tg ( X \otimes \uh \otimes \ugi ) \)}
		(02) -- node [left ] {(I)  } (10)
		(11) -- node         {(II) } (20)
		(12) -- node [above] {(III)} (21)
		[mor]
		(02) edge node [above] {\( \Tg \left( \middle( \sigma^{X}_{\Eh} \middle)_{\Eghi} \right) \)} (12)
		(12) edge node [above] {\( \Tg ( X \otimes (\varphi^{\Eh}_{\Egi})^{-1} ) \)} (22)
		(02) edge node [left ] {\( (\gamma_{g, h})_{X \otimes \uhi \otimes \ughi} \)} (00)
		(12) edge node [left ] {\( \left( \sigma^{X}_{\Eg} \right)_{\uhi \otimes \ughi} \)} (11)
		(22) edge node [left] {\( \left( \sigma^{X}_{\Eg} \right)_{\uhi \otimes \ugi} \)} (21)
		(11) edge node [above] {\( X \otimes \Tg ( (\varphi^{\Eh}_{\Egi})^{-1} ) \)} (21)
		(11) edge node [left ] {\( X \otimes (\gamma_{g,h})_{\uhi \otimes \ughi } \)} (10)
		(21) edge node [left] {\( X \otimes (\varphi^{\Eg}_{\Egh})^{-1} \)} (20)
		(00) edge node [below] {\( \left( \sigma^{X}_{\Egh} \right)_{\uhi \otimes \ughi} \)} (10)
		(10) edge node [below] {\( X \otimes (\varphi^{\Egh}_{\Eg})^{-1} \)} (20)
		;
		\end{tikzpicture}
	\end{center}
	
	Here, the commutativity of (I) comes from the commutativity of the diagram (\ref{pent-sigma}), the square (II) is commutative from Lemma \ref{composicaodosphi} and (III) is nothing else than the naturality of the transformation $\sigma^X_g$.
	
	On the other hand, consider a family of morphisms 
	\[
	\left\{ \widetilde{\sigma}_{g, g_1 ,\ldots , g_n}^X \colon T_g \left( X \otimes \1_{g^{-1}} \otimes \1_{g_1} \otimes \cdots \otimes \1_{g_n}\right) \rightarrow X \otimes \1_g \otimes \1_{gg_1} \otimes \cdots \otimes \1_{gg_n} \vert n\in \mathbb{N} , g,g_1, \ldots g_n \in G \right\}
	\]
	such that, for each $g,h \in G$ , the diagram (\ref{tildesigma}) commutes. Then, for $g\in G$ and $Y\in \mathcal{C}_g^{-1}$  define
	$\left( \sigma^X_g \right)_{Y} :T_g (X\otimes Y )\rightarrow X\otimes T_g (Y) $ as the composition:
	\[
	\left( \sigma^X_g \right)_{Y} = \left(\widetilde{\sigma}^X_g  \otimes T_g (Y) \right) \circ \left( J^g_{X\otimes \1_{g^{-1}}, Y }\right)^{-1} .
	\]
	here, for sake of simplicity, we omitted the unit isomorphisms of the monoidal categories $\mathcal{C}_{g^{-1}}$ and $\mathcal{C}_g$, respectively. It is easy to see that $\sigma^X_g$ is natural with respect to $Y$.
	
	In order to verify that the family of natural transformations $\left\{ \sigma^X_g \right\}_{g\in G}$ satisfies the commutativity of the diagram (\ref{pent-sigma}), consider $g,h\in G$ and $Y\in \mathcal{C}_{h^{-1}}\cap \mathcal{C}_{(gh)^{-1}}$, then
	
	\begin{align*}
		\hskip1em
		\mathrlap{\hskip-1em
			\left( \sigma^X_{gh} \right)_Y \circ \left( \gamma_{g,h} \right)_{X\otimes Y} =
		}\\
		& =  \left( \widetilde{\sigma}^X_{gh, h^{-1}} \otimes T_{gh}(Y) \right) \circ \left( J^{gh}_{X\otimes \1_{h^{-1}}\otimes \1_{(gh)^{-1}}} \right)^{-1} \circ \left( \gamma_{g,h} \right)_{X\otimes Y} \\
		\overset{(I)}&= \left( \widetilde{\sigma}^X_{gh, h^{-1}} \otimes T_{gh}(Y) \right) \circ \left(  \left( \gamma_{g,h }\right)_{X\otimes \1_{h^{-1}} \otimes \1_{(gh)^{-1}}}  \otimes \left( \gamma_{g,h }\right)_{Y} \right) \\
		& \quad \circ \left( J^g_{ T_h (X\otimes \1_{h^{-1}} \otimes \1_{(gh)^{-1}}) ,T_h(Y)} \right)^{-1}  \circ T_g \left(\left( J^h_{X\otimes \1_{h^{-1}} \otimes \1_{(gh)^{-1}}, Y} \right)^{-1} \right) \\
		\overset{(II)}&= \left( X\otimes \1_g \otimes \1_{gh} \otimes \left( \gamma_{g,h} \right)_Y \right)  \circ \left(  \left( \widetilde{\sigma}^X_{gh, h^{-1}} \circ \left( \gamma_{g,h} \right)_{X\otimes \1_{h^{-1}} \otimes \1_{(gh)^{-1}}} \right) \otimes T_g T_h (Y) \right) \\
		& \quad \circ \left( J^g_{ T_h (X\otimes \1_{h^{-1}} \otimes \1_{(gh)^{-1}}) ,T_h(Y)} \right)^{-1}  \circ T_g \left(\left( J^h_{X\otimes \1_{h^{-1}} \otimes \1_{(gh)^{-1}}, Y} \right)^{-1} \right) \\
		\overset{(III)}&= \left( X\otimes \left( \gamma_{g,h} \right)_Y \right)  \circ  \left(  \left( \widetilde{\sigma}^X_{g,h} \circ T_g ( \widetilde{\sigma}^X_{h,(gh)^{-1}}) \right) \otimes T_g T_h (Y) \right) \\
		& \quad \circ \left( J^g_{ T_h (X\otimes \1_{h^{-1}} \otimes \1_{(gh)^{-1}}) ,T_h(Y)} \right)^{-1}  \circ T_g \left(\left( J^h_{X\otimes \1_{h^{-1}} \otimes \1_{(gh)^{-1}}, Y} \right)^{-1} \right) \\
		\overset{(IV)}&= \left( X\otimes \left( \gamma_{g,h} \right)_Y \right)  \circ \left( \widetilde{\sigma}^X_{g,h} \otimes T_g T_h (Y) \right) \circ \left( J^g_{ X\otimes \1_h \otimes \1_{g^{-1}} ,T_h(Y)} \right)^{-1} \\
		& \quad \circ T_g \left( \left( 
		\widetilde{\sigma}^X_{h,(gh)^{-1}} \otimes T_h (Y) \right) \circ \left( J^h_{X\otimes \1_{h^{-1}} \otimes \1_{(gh)^{-1}}, Y} \right)^{-1} \right) \\
		& = \left( X\otimes \left( \gamma_{g,h} \right)_Y \right)  \circ \left(\sigma^X_{g} \right)_{T_h(Y)} \circ \left( \sigma^X_h \right)_Y .
	\end{align*}
	Here, to obtain equality (I), we used the commutativity of the diagram (\ref{pent-gamma-jota}), for equality (II), we used the naturality of $\gamma_{g,h}$ and for (III) the commutativity of the diagram of the Lemma \ref{composicaodosphi}. Finally, equality (IV) follows from the naturality of $J^g$.
	
	By definition, $\widetilde{\sigma}^X_e =u^{-1}_X$, then the commutativity of the triangle diagram (\ref{triangle-sigma}) follows trivially.
\end{proof}

In the next result we see that, when starting from a global action of a group \(G\) on a monoidal category, a partial \(G\)-equivariant object generates a global one, and vice-versa.

\begin{Theorem}\label{thm:g<->p-equivariant-object}
	Let \((\{T_g\}_{g\in G},\{\g_{g,h}\}_{g,h\in G},u)\) be a (global) action of \(G\) on a monoidal category \(\C\).
	\begin{enumerate}[\bf a)]
		\item \label{thm:g<->p-equivariant-object-1}
		Given a partial $G$-equivariant object \((X, \{\sigma^X_g\}_{g \in G})\), one can construct a (global) \(G\)-equivariant object \((X, \{\theta_g\}_{g \in G})\).
		\item \label{thm:g<->p-equivariant-object-2}
		Reciprocally, given a (global) $G$-equivariant object \((X, \{\theta_g\}_{g \in G})\), one can construct a partial \(G\)-equivariant object \((X, \{\sigma^X_g\}_{g \in G})\).
	\end{enumerate}
\end{Theorem}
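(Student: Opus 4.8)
The plan is to produce each structure from the other by evaluating at the monoidal unit and transporting along the coherence isomorphisms $J^g,J^{g0}$ of the auto-equivalences $\Tg$; every diagram to be checked will then reduce, after using naturality, to the fact that $\g_{g,h}$ and $u$ are \emph{monoidal} natural isomorphisms. For part \ref{thm:g<->p-equivariant-object-2} (global $\to$ partial) I would define, for $Y\in\C$,
\[
(\sigma^X_g)_Y=(\theta_g\ot\Tg(Y))\circ(J^g_{X,Y})\m\colon\Tg(X\ot Y)\longto X\ot\Tg(Y),
\]
which is an isomorphism and is natural in $Y$, since $J^g$ is natural and $\theta_g$ is a fixed morphism. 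To check \eqref{pent-sigma} I would expand $(\sigma^X_{gh})_Y\circ(\g_{g,h})_{X\ot Y}$ and rewrite $(J^{gh}_{X,Y})\m\circ(\g_{g,h})_{X\ot Y}$ using \eqref{pent-gamma-jota}, which expresses that $\g_{g,h}\colon\Tg\Th\tto\Tgh$ is monoidal for the composite structure $\Tg(J^h)\circ J^g$ of $\Tg\Th$; the cocycle identity \eqref{quadradoequivariant}, namely $\theta_{gh}\circ(\g_{g,h})_X=\theta_g\circ\Tg(\theta_h)$, then collapses the left-hand side to $\bigl((\theta_g\circ\Tg(\theta_h))\ot(\g_{g,h})_Y\bigr)\circ(J^g_{\Th X,\Th Y})\m\circ\Tg((J^h_{X,Y})\m)$. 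Expanding the right-hand side and moving $(J^g)\m$ past $\Tg(\theta_h\ot\id)$ by naturality of $J^g$ produces the identical expression. For \eqref{triangle-sigma}, the compatibility \eqref{triangulo_u_jota} gives $(J^e_{X,Y})\m\circ u_{X\ot Y}=u_X\ot u_Y$, so $(\sigma^X_e)_Y\circ u_{X\ot Y}=(\theta_e\circ u_X)\ot u_Y$, and the unit condition \eqref{trianguloequivariant}, $\theta_e\circ u_X=\id_X$, finishes it.

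For part \ref{thm:g<->p-equivariant-object-1} (partial $\to$ global) I would evaluate the partial datum at $\I$ and set
\[
\theta_g=(X\ot(J^{g0})\m)\circ(\sigma^X_g)_{\I}\colon\Tg(X)\longto X,
\]
an isomorphism. To verify the cocycle \eqref{quadradoequivariant} I would take \eqref{pent-sigma} at $Y=\I$, use the naturality of $\sigma^X_g$ against $J^{h0}\colon\I\to\Th(\I)$ to turn $(\sigma^X_g)_{\Th(\I)}$ into $(\sigma^X_g)_{\I}$, and then apply the unit triangle \eqref{tri-vf-eta} for the monoidal natural isomorphism $\g_{g,h}$, which reads $(\g_{g,h})_{\I}\circ\Tg(J^{h0})\circ J^{g0}=J^{gh,0}$; this cancels the coherence data down to $(J^{g0})\m$ and returns $\theta_g\circ\Tg(\theta_h)$. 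For the unit axiom \eqref{trianguloequivariant}, diagram \eqref{triangle-sigma} at $Y=\I$ together with $u_{\I}=J^{e0}$ (again \eqref{tri-vf-eta}, now for $u\colon\Id_\C\tto T_e$) gives $\theta_e\circ u_X=X\ot\bigl((J^{e0})\m\circ u_{\I}\bigr)=\id_X$.

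Finally I would record that the two assignments are mutually inverse: the unit axiom \eqref{square-J-r} for $\Tg$ yields $(J^g_{X,\I})\m=\id_{\Tg(X)}\ot J^{g0}$, so feeding $\theta$ through part \ref{thm:g<->p-equivariant-object-2} and then part \ref{thm:g<->p-equivariant-object-1} returns the original $\theta$, and symmetrically for $\sigma^X$. I expect the main obstacle to be the pentagon check in part \ref{thm:g<->p-equivariant-object-2} (equivalently the cocycle in part \ref{thm:g<->p-equivariant-object-1}): it works precisely because \eqref{pent-gamma-jota} encodes that $\g_{g,h}$ intertwines $J^{gh}$ with the composite structure $\Tg(J^h)\circ J^g$ of $\Tg\Th$, and the only delicate point is keeping track of which coherence isomorphism acts on which tensor factor.
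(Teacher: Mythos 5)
Your proposal is correct and follows essentially the same route as the paper: both directions use exactly the definitions $\theta_g=(X\ot(J^{g0})\m)\circ(\sigma^X_g)_{\I}$ (up to unit isomorphisms) and $(\sigma^X_g)_Y=(\theta_g\ot T_g(Y))\circ(J^g_{X,Y})\m$, and the verifications invoke the same ingredients in the same order — \eqref{pent-gamma-jota}, naturality of $\g_{g,h}$ and of $J^g$, the cocycle \eqref{quadradoequivariant}, and the monoidality of $\g_{g,h}$ and $u$ via \eqref{tri-vf-eta} and \eqref{triangulo_u_jota}. Your closing observation that the two assignments are mutually inverse (via \eqref{square-J-r}) is a small addition beyond the paper, which only states informally that the two notions coincide.
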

\begin{proof}
	\textbf{\ref{thm:g<->p-equivariant-object-1}}
	We can define a (global) $G$-equivariant object in the following way:
	
	\[
	\theta_g^X \colon T_g(X) \xrightarrow{~T_g(\Iso)~}
	T_g(X \otimes \1) \xrightarrow{~(\sigma_g^X)_{\1}~}
	X \otimes T_g(\1) \xrightarrow{~X \otimes (J^{g0})^{-1}}
	X \otimes \1 \xrightarrow{~\Iso~}
	X .
	\]
	
	To verify that the family $\{ \theta^X_g \}_{g\in G}$ satisfies the commutativity of the diagram (\ref{quadradoequivariant}), take elements $g,h\in G$, then
	\begin{center}
		\begin{tikzpicture}[xscale=3]
		\path
		(1,2) node (12) {\( T_{g} T_{h} (X \otimes \1) \)}
		(12.west) node [left = -4pt] (11) {\( T_{g} T_{h} (X) \Iso \)}
		(2,2) node (13) {\( T_{g} (X \otimes T_{h} (\1)) \)}
		(3,2) node (14) {\( T_{g} (X \otimes \1)\)}
		(14.east) node [right = -4pt] {\( \Iso T_{g}(X)\)}
		(2,1) node (23) {\( X \otimes T_{g} T_{h} (\1) \)}
		(3,1) node (24) {\( X \otimes T_{g} (\1) \)}
		(1,0) node (32) {\( T_{gh} (X \otimes \1) \)}
		(32.west) node [left = -4pt] (31) {\( T_{gh} (X) \Iso \)}
		(2,0) node (33) {\( X \otimes T_{gh} (\1) \)}
		(3,0) node (34) {\( X \otimes \1\)}
		(34.east) node [right = -4pt] {\( \Iso X\)}
		[mor]
		(11) edge node [left ] {\( (\gamma_{g,h})_{X} \)} (11 |- 32.north)
		(12) edge node [left ] {\( (\gamma_{g,h})_{X \otimes \1} \)} (32)
		(13) edge node [left ] {\( (\sigma_{g}^{X})_{T_{h}(\1)} \)} (23)
		(14) edge node [right] {\( (\sigma_{g}^{X})_{\1} \)} (24)
		(23) edge node [left ] {\( X \otimes (\gamma_{g,h})_{\1} \)} (33)
		(24) edge node [right] {\( X \otimes (J^{g0})^{-1} \)} (34)
		(12) edge node [above] {\( T_{g} ((\sigma_{h}^{X})_{\1}) \)} (13)
		(13) edge node [above] {\( T_{g} ( X \otimes (J^{h0})^{-1}) \)} (14)
		(23) edge node [above] {\( X \otimes T_{g}((J^{h0})^{-1}) \)} (24)
		(32) edge node [below] {\( (\sigma_{gh}^{X})_{\1} \)} (33)
		(33) edge node [below] {\( X \otimes ( J^{gh0} )^{-1} \)} (34)
		;
		\path [black]
		(12) -- node {(I)} (33)
		(13) -- node {(II)} (24)
		(23) -- node {(III)} (34)
		;
		\end{tikzpicture}
	\end{center}
	The commutativity of (I) comes from the diagram (\ref{pent-sigma}), the square (II) is nothing else than the naturality of $\sigma^X_g$ and (III) follows from the fact that $\gamma_{g,h}$ is a monoidal natural transformation.
	
	Now, let us check the commutativity of the triangle diagram (\ref{trianguloequivariant}):
	\begin{center}
		\begin{tikzpicture}[xscale=2]
		\path
		(2,1) node (12) {\( X \)}
		(3,1) node (13) {\( X \otimes \1 \)}
		(1,0) node (21) {\( T_{e} (X) \)}
		(2,0) node (22) {\( T_{e} (X \otimes \1) \)}
		(3,0) node (23) {\( X \otimes T_{e} (\1) \)}
		(4,0) node (24) {\( X \otimes \1 \)}
		[mor]
		(12) edge node [above] {\( \Iso \)} (13)
		(21) edge node [below] {\( \Iso \)} (22)
		(22) edge node [below] {\( (\sigma^{X}_{e})_{\1} \)} (23)
		(23) edge node [below] {\( X \otimes (J^{e0})^{-1} \)} (24)
		(12) edge node [left =1ex] {\( u_{X} \)} (21)
		(13) edge node [left =1ex] {\( u_{X \otimes \1} \)} (22)
		(13) edge node [right] {\( X \otimes u_{\1} \)} (23)
		[igual]
		(13) edge [bend left] (24)
		;
		\path [black]
		(12) -- node {(I)} (22)
		(22) -- coordinate (_) (23)
		(_) -- node {(II)} (13)
		(13) -- node  {(III)} (24)
		;
		\end{tikzpicture}
	\end{center}
	Here, the commutativity of (I) follows from the naturality of $u$, (II) is exactly the triangle diagram (\ref{triangle-sigma}) and (III) comes from the fact that $u$ is a monoidal natural transformation.
	
	\textbf{\ref{thm:g<->p-equivariant-object-2}}
	Since \(T_g\) is a monoidal functor, so there exists the natural isomorphism \(J^g \colon T_g(\esp) \otimes T_g(\esp) \implies T_g ( \esp \otimes \esp )\), then one can define a natural transformation \(\sigma_g \colon T_g(X \otimes \esp) \implies X \otimes T_g(\esp)\) in the following way
	\[
	\sigma_g \colon T_g ( X \otimes \esp )   \xRightarrow{\ \left( J^g_{X,\esp} \right)^{-1}\ }
	T_g(X) \otimes T_g(\esp) \xRightarrow{\ \theta_g \otimes T_g(\esp) \ }
	X \otimes T_g(\esp) .
	\]
	To verify that this family of natural transformations satisfies the commutativity of the diagram (\ref{pent-sigma}), consider $g,h\in G$, then
	\begin{align*}
		\hskip1em
		\mathrlap{\hskip-1em
			\left( \sigma^X_{gh} \right)_Y \circ \left( \gamma_{g,h} \right)_{X\otimes Y} = 
		}\\
		& = \left( \theta^X_{gh} \otimes T_{gh} (Y) \right) \circ \left( J^{gh}_{X,Y}\right)^{-1} \circ \left( \gamma_{g,h} \right)_{X\otimes Y} \\
		\overset{(I)}&= \left( \theta^X_{gh} \otimes T_{gh} (Y) \right) \circ \left( \left( \gamma_{g,h} \right)_{X} \otimes \left( \gamma_{g,h} \right)_{Y}\right) \circ \left( J^{g}_{T_h(X),T_h(Y)}\right)^{-1} \circ T_g \left( \left( J^{h}_{X,Y}\right)^{-1} \right) \\
		\overset{(II)}&= \left( X \otimes \left( \gamma_{g,h} \right)_{Y} \right) \circ \left( \theta^X_{gh} \otimes T_g T_h (Y) \right) \circ \left( \left( \gamma_{g,h} \right)_{X} \otimes T_g T_h (Y) \right) \\
		& \quad \circ \left( J^{g}_{T_h(X),T_h(Y)}\right)^{-1} \circ T_g \left( \left( J^{h}_{X,Y}\right)^{-1} \right) \\
		\overset{(III)}&= \left( X \otimes \left( \gamma_{g,h} \right)_{Y} \right) \circ \left( \theta^X_{g} \otimes T_g T_h (Y) \right) \circ \left( T_g (\theta^X_{h}) \otimes T_g T_h (Y) \right) \\
		& \quad \circ \left( J^{g}_{T_h(X),T_h(Y)}\right)^{-1} \circ T_g \left( \left( J^{h}_{X,Y}\right)^{-1} \right) \\
		\overset{(IV)}&= \left( X \otimes \left( \gamma_{g,h} \right)_{Y} \right) \circ \left( \theta^X_{g} \otimes T_g T_h (Y) \right) \circ \left( J^{g}_{X,T_h(Y)}\right)^{-1} \\
		& \quad \circ T_g \left( \left( \theta^X_{h} \otimes T_h (Y) \right) \circ \left( J^{h}_{X,Y}\right)^{-1} \right) \\
		& = \left( X \otimes \left( \gamma_{g,h} \right)_{Y} \right) \circ \left( \sigma^X_{g} \right)_{T_h(Y)} \circ T_g \left( \left( \sigma^X_{h} \right)_{Y} \right) .
	\end{align*}
	The equality (I) follows from (\ref{pent-gamma-jota}), (II) is the naturality of $\gamma_{g,h}$, (III) comes from (\ref{quadradoequivariant}) and (IV) from the naturality of $J^g$.
	
	For diagram (\ref{triangle-sigma}), we have
	\begin{align*}
		\left( \sigma^X_{e} \right)_{Y} \circ u_{X\otimes Y}
		& = \left( \theta^X_{e} \otimes T_{e} (Y) \right) \circ \left( J^{e}_{X,Y}\right)^{-1} \circ u_{X\otimes Y} \\
		& = \left( \theta^X_{e} \otimes T_{e} (Y) \right) \circ \left( u_X \otimes u_Y \right) \\
		& = X\otimes u_Y .
	\end{align*}
	
	Therefore, for a global action of a group $G$ on a monoidal category $\mathcal{C}$ the notions of equivariant and partially equivariant object coincide.
\end{proof}

\begin{Example}
	Consider a unital partial action of a finite group $G$ on a commutative ring $R$ and its extension to the partial action of $G$ on the sub-monoidal category $\mathcal{A}_R (G) \subseteq {}_R \mathcal{M}$, generated by tensor products of the ideals $R_g$ as introduced in Example \ref{categorificationofA}. If $G=\{ g_1 ,g_2, \ldots , g_n \}$, then the object $X=R_{g_1} \otimes_R R_{g_2} \otimes_R \cdots \otimes_R R_{g_n}$, together with the morphisms 
	$\sigma^{X}_g :S_g (X \otimes_R M)\rightarrow X\otimes S_g (M)$, which are the identity morphism in $X\otimes_R S_g (M)$, for $M\in \overline{R_g \otimes_R \mathcal{A}_R (G)}$, constitutes an equivariant object in $\mathcal{A}_R (G)^{\underline{G}}$
\end{Example}

\begin{Definition}
	Let \( ( X , \sigma^X ) \) and \( ( Y , \sigma^Y ) \) be two partial \(G\)-equivariant objects, one can define the product of these subjects in the following way
	\[
	( X , \sigma^X ) \widehat{\otimes} ( Y , \sigma^Y )= \left( X \otimes Y , \sigma^{X \otimes Y} \right),
	\]
	where \( \{ \sigma_g^{X \otimes Y} \}_{g \in G} \) is the family of natural isomorphisms given by
	\[
	T_g ( X \otimes Y \otimes \esp ) \xRightarrow{~\sigma_g^X~}
	X \otimes T_g ( Y \otimes \esp ) \xRightarrow{~X \otimes \sigma_g^Y~}
	X \otimes Y \otimes T_g ( \esp ).
	\]
\end{Definition}

\begin{Definition}
	Let \(\alpha \colon (X, \sigma^X) \longto (X' , \sigma^{X'})\) and \(\beta \colon (Y, \sigma^Y) \longto (Y' , \sigma^{Y'}) \) be two morphisms in $\mathcal{C}^{\underline{G}}$, one can define the product of these morphisms as the tensor product \( \alpha \otimes \beta \) in \( \C \), i.e.
	\[
	\alpha \otimes \beta \colon X \otimes Y \longto X' \otimes Y'.
	\]
\end{Definition}

One need to verify that \(\alpha \otimes \beta \) is a morphism in $\mathcal{C}^{\underline{G}}$. So it is need to show that the diagram in \eqref{eq:G-equiv-mor} commutes for \(\alpha \otimes \beta \), so we have the following diagram
\[
\begin{tikzpicture}[xscale=3]
\path
node (P1) at (0, 0) {\(T_g(X  \otimes Y  \otimes \esp)\)}
node (P2) at (0,-1) {\(X  \otimes Y  \otimes T_g(\esp)\)}
node (P5) at (1, 0) {\(T_g(X' \otimes Y' \otimes \esp)\)}
node (P6) at (1,-1) {\(X' \otimes Y' \otimes T_g(\esp)\)}
[nat]
(P1) edge node [left ] {\( \sigma_g^{X \otimes Y} \)} (P2)
edge node [above] {\( T_g ( \alpha \otimes \beta \otimes \esp ) \)} (P5)
(P2) edge node [below] {\( \sigma_g^{X' \otimes Y'} \)} (P6)
(P5) edge node [right] {\( \alpha \otimes \beta \otimes T_g ( \esp ) \)} (P6)
;
\end{tikzpicture}
\]
which became
\[
\begin{tikzpicture}[xscale=2]
\path
node (P1) at (  0, 0) {\(T_g(X  \otimes Y  \otimes \esp)\)}
node (P2) at (  0,-2) {\(X  \otimes Y  \otimes T_g(\esp)\)}
node (P3) at ( .8,-1) {\(X  \otimes T_g( Y  \otimes \esp )\)}
node (P4) at (2.4,-1) {\(X' \otimes T_g( Y' \otimes \esp )\)}
node (P5) at (3.2, 0) {\(T_g(X' \otimes Y' \otimes \esp)\)}
node (P6) at (3.2,-2) {\(X' \otimes Y' \otimes T_g(\esp)\)}
[black]
(P1.center) -- node {(I)  } (P4 -| P5)
(P3) -- node {(II) } (P3 -| P1)
(P4) -- node {(III)} (P4 -| P6)
(P2.center) -- node {(IV) } (P4 -| P5)
[black,nat]
(P1) edge node [left ] {\( \sigma_g^{X \otimes Y} \)} (P2)
edge node [above] {\( T_g ( \alpha \otimes \beta \otimes \esp ) \)} (P5)
edge node [right=1ex] {\( \sigma_g^X \)} (P3)
(P2) edge node [below] {\( \alpha \otimes \beta \otimes T_g ( \esp ) \)} (P6)
(P3) edge node [right=1ex] {\( X \otimes \sigma_g^Y \)} (P2)
edge node [above] {\( \alpha \otimes T_g( Y \otimes \esp) \)} (P4)
(P4) edge node [left =1ex] {\( \sigma_g^{Y'}\ \)} (P6)
(P5) edge [edge label' = {\( \sigma_g^{X'}\ \)}] (P4)
edge node [right] {\( \sigma_g^{X' \otimes Y'} \)} (P6)
;
\end{tikzpicture}
\]
The commutativity of (II) and (II) follows from the definition of tensor product of \(G\)-equivariant objects and the commutativity of (I) and (IV) follows from the naturality of \(T_g\).

We want to give a structure of monoidal category to \(\mathcal{C}^{\underline{G}}\) with the tensor product given, so we need to define the unity for such tensor product. Let \(T\) be a partial action on a monoidal category \(\C\), so it's a monoidal functor and therefore \(T_g = (T_g, J^g, J^{g0})\), in which $J^{g0}=\left(\varphi^g\right)^{-1}$. Now, one can define the unity \((\1, \sigma^{\1})\), where \(\1\) is the unity of \(\C\) and \(\sigma^{\1}\) is defined by
\[
\begin{tikzpicture}[xscale=3]
\path
(0,1) node (1) {\(T_g(\1 \otimes \esp)\)}
(0,0) node (2) {\(T_g(\1_{g^{-1}} \otimes \esp) \)}
(1,0) node (3) {\(T_g(\1_{g^{-1}}) \otimes T_g(\esp) \)}
(2,0) node (4) {\(\1_g \otimes T_g(\esp) \),}
(2,1) node (5) {\(\1 \otimes T_g(\esp) \)}
[nat]
(1) edge node [left ] {\Iso} (2)
(2) edge node [above] {\(J^g\)} (3)
(3) edge node [above] {\((J^{g0})^{-1} \otimes T_g(\esp)\)} (4)
(4) edge node [right] {\Iso} (5)
;
\path [nat,dashed]
(1) edge node [above] {\(\sigma^{\1}\)} (5)
;
\end{tikzpicture}
\]
where the isomorphisms denoted by $\cong$ are given by \(l\) on and \(l^{-1}\). More precisely, for $Y\in \mathcal{C}_{g^{-1}}$, we have
\[
T_g (\1 \otimes Y) \cong T_g (\1 \otimes \1_{{g^{-1}}} \otimes Y) \cong T_g (\1_{g^{-1}} \otimes Y) 
\]
and 
\[
\1_g \otimes T_g (Y) \cong \1 \otimes \1_g \otimes T_g (Y) \cong \1 \otimes T_g (Y) .
\]

The verification that \(\mathcal{C}^{\underline{G}}\) is a monoidal category is routine and left to the reader.

\subsection{The partial trace}

Throughout this subsection, we shall deal only with partial actions of a finite group $G$ upon a monoidal and abelian category $\C$ in which each functor $T_g$ is monoidal and additive and the categorical ideals are defined by central idempotent objects.

\begin{Definition}\label{Definitionoftrace}
	Consider $\mathcal{C}$ being an abelian monoidal category and let
	\[ 
	T= \left( \{ \mathcal{C}_g =\overline{\1_g \otimes \mathcal{C}} \}_{g\in G} , \{ T_g :\mathcal{C}_{g^{-1}} \rightarrow \mathcal{C}_g \}_{g\in G} , \{ \gamma_{g,h} \}_{g,h\in G}  \right)
	\]
	be a partial action of a finite group $G$ on $\mathcal{C}$ defined by a family of central idempotent objects $\{ \1_g \}_{g\in G}$ in which the functors $T_g$ are additive. For an object $X\in \mathcal{C}$, define its partial trace as the object
	\[
	\Tr (X) =\bigoplus_{g\in G} T_g (X\otimes \1_{g^{-1}}) .
	\]
\end{Definition}

\begin{Lemma}
	Under the conditions of the Definition \ref{Definitionoftrace}, for each list of elements $g, g_1, \ldots , g_n\in G$ there is an isomorphism 
	\[
	\widetilde{\sigma}^{TrX}_{g, g_1, \ldots g_n} :T_g \left( \Tr (X) \otimes \1_{g^{-1}} \otimes \1_{g_1} \otimes \cdots \otimes \1_{g_n} \right) \rightarrow \Tr(X)\otimes \1_g \otimes \1_{gg_1} \otimes \cdots \otimes \1_{gg_n}
	\]
	such that $\left( \Tr(X) , \{ \widetilde{\sigma}^{TrX}_{g, g_1, \ldots, g_n} \}_{g,g_1, \ldots , g_n\in G} \right) $ defines a partially equivariant object in $\mathcal{C}^{\underline{G}}$.
\end{Lemma}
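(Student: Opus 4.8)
The plan is to construct the family $\{\widetilde{\sigma}^{\Tr X}_{g,g_1,\dots,g_n}\}$ summand by summand and then invoke the preceding Lemma, which reduces the construction of a partially $G$-equivariant object (for a partial action generated by central idempotents) to exhibiting such a family with $\widetilde{\sigma}^{\Tr X}_e = u^{-1}_{\Tr X}$ and satisfying \eqref{tildesigma}. Since each $T_g$ is additive and $\otimes$ is additive in each variable, the source decomposes as
\[
T_g\Big(\Tr(X)\otimes\1_{g^{-1}}\otimes\1_{g_1}\otimes\cdots\otimes\1_{g_n}\Big)\Iso\bigoplus_{a\in G}T_g\big(T_a(X\otimes\1_{a^{-1}})\otimes\1_{g^{-1}}\otimes\1_{g_1}\otimes\cdots\otimes\1_{g_n}\big),
\]
while the target decomposes as $\bigoplus_{b\in G}T_b(X\otimes\1_{b^{-1}})\otimes\1_g\otimes\1_{gg_1}\otimes\cdots\otimes\1_{gg_n}$. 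I would match the $a$-th source summand with the $b$-th target summand along the bijection $b=ga$ of $G$; since left multiplication by $g$ permutes $G$, assembling these matches yields the desired isomorphism.

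For fixed $a$, the object $T_a(X\otimes\1_{a^{-1}})\otimes\1_{g^{-1}}\otimes\1_{g_1}\otimes\cdots\otimes\1_{g_n}$ lies in $\mathcal{C}_a\cap\mathcal{C}_{g^{-1}}$. First I would insert the unit $\1_a$ of $\mathcal{C}_a$, apply the generalized isomorphism $\varphi^a$ of \cref{lemadasunidades} to the idempotent block, merge the two $T_a$-factors through $J^a$ and collapse the repeated $\1_{a^{-1}}$ by its fusion isomorphism; this rewrites the summand as $T_gT_a\big(X\otimes\1_{a^{-1}}\otimes\1_{(ga)^{-1}}\otimes\1_{a^{-1}g_1}\otimes\cdots\otimes\1_{a^{-1}g_n}\big)$, whose inner object lies in $\mathcal{C}_{a^{-1}}\cap\mathcal{C}_{(ga)^{-1}}$. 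Then $\gamma_{g,a}$ turns $T_gT_a$ into $T_{ga}$; rewriting every idempotent index as $\1_{(ga)^{-1}c}$, splitting off $X\otimes\1_{(ga)^{-1}}$ by $J^{ga}$ (both factors lie in $\mathcal{C}_{(ga)^{-1}}$, so $J^{ga}$ applies), and evaluating the remaining idempotent block by the generalized $\varphi^{ga}$ produces exactly $T_{ga}(X\otimes\1_{(ga)^{-1}})\otimes\1_g\otimes\1_{gg_1}\otimes\cdots\otimes\1_{gg_n}$, the $b$-th target summand. Taking the direct sum over $a$ defines $\widetilde{\sigma}^{\Tr X}_{g,g_1,\dots,g_n}$.

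It remains to check the normalization and \eqref{tildesigma}. For $g=e$ (and no extra idempotents) the reindexing is trivial, and on the $a$-th summand the composite collapses, by \eqref{(gamma_g1)_X-u_(T_g(X))}, to $u^{-1}_{T_a(X\otimes\1_{a^{-1}})}$; additivity and naturality of $u$ then give $\widetilde{\sigma}^{\Tr X}_e=u^{-1}_{\Tr X}$. For \eqref{tildesigma} I would argue summand-wise: on the $a$-th summand both paths perform the double reindexing $a\mapsto ha\mapsto (gh)a$, and the equality of the two composites decomposes into cells that are instances of the $\gamma$-pentagon \eqref{square-T-gamma}, the $J$--$\gamma$ compatibility \eqref{pent-gamma-jota}, and the $\varphi$-composition \cref{composicaodosphi} --- exactly the coherences already used to prove the preceding Lemma. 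With \eqref{tildesigma} and the normalization in hand, that Lemma yields the partially $G$-equivariant object $\big(\Tr(X),\{\widetilde{\sigma}^{\Tr X}_{g,g_1,\dots,g_n}\}\big)$.

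The main obstacle is the bookkeeping in this last verification. Each summand isomorphism is a long composite of the structural isomorphisms $\varphi$, $J$ and $\gamma$, and checking \eqref{tildesigma} amounts to matching two such composites after the double reindexing $a\mapsto ha\mapsto (gh)a$. The conceptual content is slight --- it is the categorical shadow of the associativity of left multiplication in $G$ permuting the summands of $\Tr(X)$ --- but arranging the diagram so that every cell is a previously established coherence, and keeping track of the domains $\mathcal{C}_{a^{-1}}\cap\mathcal{C}_{(ga)^{-1}}$ on which $\gamma_{g,a}$ and the $\varphi$'s are defined, is where the effort concentrates.
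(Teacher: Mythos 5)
Your construction is essentially the paper's own proof: the same decomposition of $T_g(\Tr(X)\otimes\1_{g^{-1}}\otimes\cdots\otimes\1_{g_n})$ into summands via additivity of $T_g$ and distributivity of $\otimes$, followed per summand by the same chain of structural isomorphisms (insert $\1_a$, apply the generalized $\varphi^a$ of \cref{lemadasunidades}, merge via $J^a$, apply $\gamma_{g,a}$, split via $J^{ga}$, evaluate via $\varphi^{ga}$), with the implicit reindexing $a\mapsto ga$ recovering $\Tr(X)$ in the target --- exactly the composite displayed in \cref{sigmatildotraco}. Your treatment of the normalization $\widetilde{\sigma}^{\Tr X}_e=u^{-1}_{\Tr X}$ and your identification of the coherences (\cref{square-T-gamma}, \cref{pent-gamma-jota}, \cref{composicaodosphi}) needed for \cref{tildesigma} is, if anything, slightly more explicit than the paper, which omits that verification as ``long, but straightforward.''
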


\begin{proof}
	Indeed, for $g, g_1, \ldots , g_n\in G$ and $X\in \mathcal{C}$, we have
	\begin{align*}
		& T_g \left( \Tr (X) \otimes \1_{g^{-1}} \otimes \1_{g_1} \otimes \cdots \otimes \1_{g_n} \right)
		= T_g \left( \left( \bigoplus_{h\in G} T_h (X\otimes \1_{h^{-1}})\right) \otimes \1_{g^{-1}} \otimes \1_{g_1} \otimes \cdots \otimes \1_{g_n} \right)  \\
		& \cong T_g \left( \bigoplus_{h\in G} \left( T_h (X\otimes \1_{h^{-1}}) \otimes \1_{g^{-1}} \otimes \1_{g_1} \otimes \cdots \otimes \1_{g_n} \right) \right)  \\
		& \cong \bigoplus_{h\in G} T_g \left( T_h (X\otimes \1_{h^{-1}}) \otimes \1_{g^{-1}} \otimes \1_{g_1} \otimes \cdots \otimes \1_{g_n} \right)  \\
		& \cong \bigoplus_{h\in G} T_g \left( T_h (X\otimes \1_{h^{-1}}) \otimes \1_h \otimes \1_{g^{-1}} \otimes \1_{g_1} \otimes \cdots \otimes \1_{g_n} \right)  \\
		& \cong \bigoplus_{h\in G} T_g \left( T_h (X\otimes \1_{h^{-1}}) \otimes T_h( \1_{h^{-1}} \otimes \1_{(gh)^{-1}} \otimes \1_{h^{-1}g_1} \otimes \cdots \otimes \1_{h^{-1}g_n}) \right)  \\
		& \cong \bigoplus_{h\in G} T_g \left( T_h (X\otimes \1_{h^{-1}} \otimes \1_{(gh)^{-1}} \otimes \1_{h^{-1}g_1} \otimes \cdots \otimes \1_{h^{-1}g_n} ) \right)  \\
		& \cong \bigoplus_{h\in G} T_{gh} \left( X\otimes \1_{h^{-1}} \otimes \1_{(gh)^{-1}} \otimes \1_{h^{-1}g_1} \otimes \cdots \otimes \1_{h^{-1}g_n}  \right)  \\
		& \cong \bigoplus_{h\in G} T_{gh} \left( X\otimes \1_{(gh)^{-1}}\right)  \otimes T_{gh} \left( \1_{h^{-1}} \otimes \1_{(gh)^{-1}} \otimes \1_{h^{-1}g_1} \otimes \cdots \otimes \1_{h^{-1}g_n}  \right)  \\
		& \cong \bigoplus_{h\in G} T_{gh} \left( X\otimes \1_{(gh)^{-1}}\right)  \otimes  \1_g \otimes \1_{gh} \otimes \1_{gg_1} \otimes \cdots \otimes \1_{gg_n}   \\
		& \cong \Tr (X) \otimes \1_g \otimes \1_{gh} \otimes \1_{gg_1} \otimes \cdots \otimes \1_{gg_n} .
	\end{align*}
	Then, ignoring the obvious isomorphims coming from the distributivity of the tensor product with respect to the direct sum, the additivity of the functor $T_g$ and the absorptions of the respective units, one can write the isomorphism $\widetilde{\sigma}^{TrX}_{g, g_1, \ldots g_n}$ as the composition:
	\begin{equation}\label{sigmatildotraco}
		\begin{split}
			\widetilde{\sigma}^{TrX}_{g, g_1, \ldots g_n}
			& = \left( \bigoplus_{h\in G} \left( T_{gh}(X\otimes \1_{(gh)^{-1}})\otimes \left( \varphi^{gh}_{g,gg_1, \ldots , gg_n} \right)^{-1} \right) \right) \\
			& \quad \circ \left( \bigoplus_{h\in G} \left( J^{gh}_{X\otimes \1_{(gh)^{-1}} ,\1_{h^{-1}} \otimes \1_{(gh)^{-1}} \otimes \1_{h^{-1}g_1} \otimes \cdots \otimes \1_{h^{-1}g_n}} \right)^{-1} \right) \\
			& \quad \circ \left( \bigoplus_{h\in G} \left( \gamma_{g,h}\right)_{X\otimes \1_{h^{-1}} \otimes \1_{(gh)^{-1}} \otimes \1_{h^{-1}g_1} \otimes \cdots \otimes \1_{h^{-1}g_n}} \right) \\
			& \quad \circ  \left( \bigoplus_{h\in G} T_g \left( T_{h}(X\otimes \1_{h^{-1}})\otimes J^h_{X\otimes \1_{h^{-1}} ,\1_{h^{-1}} \otimes \1_{(gh)^{-1}} \otimes \1_{h^{-1}g_1} \otimes \cdots \otimes \1_{h^{-1}g_n}} \right) \right) \\
			& \quad \circ \left( \bigoplus_{h\in G} T_g \left( T_{h}(X\otimes \1_{h^{-1}})\otimes  \varphi^{h}_{g^{-1},g_1, \ldots , g_n}  \right) \right)
		\end{split}
	\end{equation}
	
	We shall omit here the long, but straightforward chain of calculations needed to check that the family $\{ \widetilde{\sigma}^{TrX}_{g, g_1, \ldots g_n} \}_{g,g_1, \ldots , g_n}$ satisfy the commutativity of the diagram (\ref{tildesigma}). Therefore, the data $\left( \Tr(X), \{ \widetilde{\sigma}^{\Tr(X)}_{g,g_1, \ldots g_n }\}_{g,g_1, \ldots, g_n} \right)$ defines a partially equivariant object in $\mathcal{C}^{\underline{G}}$.
\end{proof}

\begin{Theorem}
	Let  
	\[ 
	T= \left( \{ \mathcal{C}_g =\overline{\1_g \otimes \mathcal{C}} \}_{g\in G} , \{ T_g \colon \C_{g^{-1}} \rightarrow \mathcal{C}_g \}_{g\in G} , \{ \gamma_{g,h} \}_{g,h\in G}  \right)
	\]
	be a partial action of a finite group $G$ on a monoidal and abelian category $\mathcal{C}$ defined by a family of central idempotent objects $\{ \1_g \}_{g\in G}$ in which the functors $T_g$ are additive. Then the trace defines a semigroupal, but not monoidal, category and additive functor $\Tr \colon \C \rightarrow \mathcal{C}^{\underline{G}}$.   
\end{Theorem}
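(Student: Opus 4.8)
The plan is to regard $\Tr$ as landing in the partially equivariant objects built in the preceding Lemma, and to split the statement into three independent tasks: functoriality together with additivity, the multiplicative comparison, and the failure of unitality. First I would define $\Tr$ on a morphism $f\colon X\to Y$ by $\Tr(f)=\bigoplus_{g\in G}T_g(f\otimes\1_{g^{-1}})$. Since each $T_g$ and each $-\otimes\1_{g^{-1}}$ is a functor on $\C$, this preserves identities and composition, and the preceding Lemma equips every $\Tr(X)$ with the equivariant structure $\widetilde\sigma^{\Tr X}$ of \eqref{sigmatildotraco}; a direct check, using only the naturality in $f$ of the isomorphisms $\gamma_{g,h}$, $J^g$ and $\varphi^g$ appearing there, shows that $\Tr(f)$ intertwines $\widetilde\sigma^{\Tr X}$ and $\widetilde\sigma^{\Tr Y}$, so $\Tr(f)$ is a morphism in $\mathcal{C}^{\underline{G}}$ and $\Tr\colon\C\to\mathcal{C}^{\underline{G}}$ is well defined. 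Additivity is then immediate: each $T_g$ is additive by hypothesis, $-\otimes\1_{g^{-1}}$ preserves biproducts because $\otimes$ is additive in each variable, and a finite direct sum of additive functors is additive.

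Next I would construct the semigroupal comparison. As each $T_k$ is monoidal on $\C_{k^{-1}}$ and $\1_{k^{-1}}$ is a central idempotent, duplicating $\1_{k^{-1}}$ and applying $(J^k)^{-1}$ yields a natural isomorphism
\[
T_k(X\otimes Y\otimes\1_{k^{-1}})\cong T_k(X\otimes\1_{k^{-1}})\otimes T_k(Y\otimes\1_{k^{-1}}).
\]
Summing over $k$ identifies $\Tr(X\otimes Y)$ with the diagonal family inside
\[
\Tr(X)\otimes\Tr(Y)=\bigoplus_{g,h\in G}T_g(X\otimes\1_{g^{-1}})\otimes T_h(Y\otimes\1_{h^{-1}}),
\]
so I would take $J^{\Tr}_{X,Y}\colon\Tr(X)\widehat{\otimes}\Tr(Y)\to\Tr(X\otimes Y)$ to be the projection onto the summands with $g=h$ followed by these isomorphisms; it is patently natural in both variables. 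Two verifications remain: that $J^{\Tr}_{X,Y}$ is a morphism in $\mathcal{C}^{\underline{G}}$ --- equivalently that the diagonal family is stable under the product equivariant structure and that the projection intertwines it with $\widetilde\sigma^{\Tr(X\otimes Y)}$ --- and that the hexagon of \cref{mon-funct-defn} commutes. Expanding $\widetilde\sigma$ through \eqref{sigmatildotraco}, both reduce, summand by summand, to the naturality of $\gamma_{g,h}$ and $J^g$, the cocycle square \eqref{square-T-gamma}, the pentagon \eqref{pent-gamma-jota} and \cref{composicaodosphi}, exactly in the spirit of the $\tau_g$ computation carried out in the globalization section.

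To see that $\Tr$ is \emph{not} monoidal I would evaluate at the unit:
\[
\Tr(\1)=\bigoplus_{g\in G}T_g(\1\otimes\1_{g^{-1}})\cong\bigoplus_{g\in G}T_g(\1_{g^{-1}}),
\]
a direct sum of $|G|$ nonzero objects, whereas the monoidal unit of $\mathcal{C}^{\underline{G}}$ is $(\1,\sigma^{\1})$, whose underlying object is $\1$. For $|G|>1$ these are not isomorphic in $\C$ --- already for the trivial action one has $\Tr(\1)\cong\1^{\oplus|G|}$ --- so no invertible unit comparison $J^0\colon\1_{\mathcal{C}^{\underline{G}}}\to\Tr(\1)$ can exist. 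Hence $\Tr$ carries a coherent multiplicative comparison but admits no unit comparison, which is precisely the assertion that it is semigroupal but not monoidal; the same computation also explains why $J^{\Tr}$ is not invertible, the off-diagonal summands with $g\neq h$ of $\Tr(X)\otimes\Tr(Y)$ being invisible to $\Tr(X\otimes Y)$.

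I expect the second task to be the only real obstacle. Because $\widetilde\sigma^{\Tr X}$ is the five-fold composite \eqref{sigmatildotraco} indexed over $G$, verifying that the diagonal projection is equivariant and satisfies the hexagon is a lengthy but mechanical diagram chase in which one must transport each coherence cell past the direct-sum decomposition; no ingredient beyond the naturality statements and \cref{lemadasunidades,composicaodosphi} is needed, which is doubtless why the analogous verifications are relegated to the reader elsewhere in the paper.
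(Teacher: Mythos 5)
Your proposal tracks the paper's proof almost step for step: the same definition of $\Tr$ on morphisms, the same verification (via naturality of the constituents of \eqref{sigmatildotraco}) that $\Tr(f)$ intertwines the equivariant structures and hence is a morphism of $\mathcal{C}^{\underline{G}}$, the same additivity argument, and the same computation $\Tr(\1)\cong\bigoplus_{g\in G}\1_g$ to rule out a unit comparison. The only place you go beyond the paper is the multiplicative comparison: the paper dismisses semigroupality in one sentence (``distributivity of the tensor product with respect to the direct sum''), whereas you construct $J^{\Tr}$ explicitly as the projection of $\Tr(X)\widehat{\otimes}\Tr(Y)\cong\bigoplus_{g,h\in G}T_g(X\otimes\1_{g^{-1}})\otimes T_h(Y\otimes\1_{h^{-1}})$ onto its diagonal summands, followed by the isomorphisms $T_g(X\otimes\1_{g^{-1}})\otimes T_g(Y\otimes\1_{g^{-1}})\cong T_g(X\otimes Y\otimes\1_{g^{-1}})$.

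That explicitness, however, surfaces a genuine problem which your write-up then papers over. You correctly observe that $J^{\Tr}$ is not invertible (the off-diagonal summands are killed), but \cref{mon-funct-defn} defines a semigroupal functor as a pair $(F,J)$ with $J$ a natural \emph{isomorphism}. Under the paper's own definition, a non-invertible comparison does not make $\Tr$ semigroupal, so your closing inference --- ``a coherent multiplicative comparison but no unit comparison, which is precisely the assertion that it is semigroupal but not monoidal'' --- is not valid as stated. Moreover no invertible comparison can exist at all: $\Tr(X)\otimes\Tr(Y)$ has $|G|^2$ summands while $\Tr(X\otimes Y)$ has $|G|$, and already for the trivial global action on vector spaces with $X=Y=\Bbbk$ these objects are non-isomorphic. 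So the theorem is true only if ``semigroupal'' is weakened to a lax structure (a natural transformation satisfying the hexagon), and your proof should say this explicitly. To be fair, this defect is inherited from the paper itself, whose one-line justification likewise produces no isomorphism; but since you, unlike the paper, exhibit the comparison and its non-invertibility, your argument is internally inconsistent unless you also flag the needed relaxation of the definition. The remaining deferred verifications (that the diagonal projection is a morphism in $\mathcal{C}^{\underline{G}}$ and satisfies the hexagon) are no worse than the paper's omissions, and the ingredients you list for them are the right ones.
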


\begin{proof}
	For any morphism $f:X\rightarrow Y$ in $\mathcal{C}$, define the map
	\[
	\Tr(f)=\bigoplus_{g\in G} T_g( f\otimes \1_{g^{-1}}) .
	\]
	This is a well defined morphism in $\mathcal{C}$ between $\Tr(X)$ and $\Tr (Y)$ by the universal property of the direct sum. In order to check that $\Tr(f)$ is indeed a morphism in the category $\mathcal{C}^{\underline{G}}$, take and element $g\in G$ and an object $Z\in \mathcal{C}_{g^{-1}}$, then
	\begin{align*}
		& \left( \Tr (f) \otimes  T_g (Z) \right)  \circ \left( \sigma^{\Tr(X)}_g \right)_Z
		= \left( \Tr (f) \otimes \1_g \otimes  T_g (Z) \right) \circ \left( \widetilde{\sigma}^{\Tr(X)}_g \otimes T_g (Z) \right) \circ \left( J^g_{tr(X) \otimes \1_{g^{-1}}, Z}\right)^{-1} \\
		\overset{(I)}&= \left( \widetilde{\sigma}^{\Tr(Y)}_g \otimes T_g (Z) \right) \circ \left( T_g (\Tr(f) \otimes \1_{g^{-1}}) \otimes T_g (Z) \right)  \circ \left( J^g_{tr(X) \otimes \1_{g^{-1}}, Z}\right)^{-1} \\
		\overset{(II)}&= \left( \widetilde{\sigma}^{\Tr(Y)}_g \otimes T_g (Z) \right) \circ   \left( J^g_{tr(Y) \otimes \1_{g^{-1}}, Z}\right)^{-1}  T_g \left( \Tr(f) \otimes \1_{g^{-1}}) \otimes Z \right)  \\
		& = \left( \sigma^{\Tr(Y)}_g \right)_Z \circ T_g \left( \Tr(f) \otimes Z \right) .
	\end{align*}
	Therefore, $\Tr$ i a functor between the categories $\mathcal{C}$ and $\mathcal{C}^{\underline{G}}$.
	
	The functor $\Tr$ is additive because of the universal property of the direct sum and it is semigroupal from the distributivity of the tensor product with respect to the direct sum. It is not monoidal because 
	\[
	\Tr(\1) =\bigoplus_{g\in G} T_g (\1 \otimes \1_{g^{-1}}) \cong \bigoplus_{g\in G} \1_g .
	\]
\end{proof}

In general, for $(X, \sigma )\in \mathcal{C}^{\underline{G}}$, we have
\[
\Tr(X) =\bigoplus_{g\in G} T_g (X\otimes \1_{g^{-1}}) \cong \bigoplus_{g\in G} X\otimes \1_g .
\]
Then $X$ is isomorphic to a direct summand of $\Tr(X)$ as objects in $\mathcal{C}$. But, in general, this isomorphism cannot be lifted to the category $\mathcal{C}^{\underline{G}}$.

Consider a partial action
\[
T= \left( \{ \mathcal{C}_g =\overline{\1_g \otimes \mathcal{C}} \}_{g\in G} , \{ T_g \colon \C_{g^{-1}} \rightarrow \mathcal{C}_g \}_{g\in G} , \{ \gamma_{g,h} \}_{g,h\in G}  \right) ,
\]
of a finite group $G$ upon a monoidal and abelian category $\mathcal{C}$ defined by a family of central idempotent objects $\{ \1_g \}_{g\in G}$ and such that each functor $T_g$ is monoidal and additive. There is an algebra object in the category $\mathcal{C}$ with nice properties inherited from the partial action $T$. Define the set
\[
\mathcal{P}_e (G) =\{ X\subseteq G | e\in X \} 
\]
and, for each $X=\{ g_1 , \ldots , g_n \}\in \mathcal{P}_e (G)$, define the idempotent object $E_X =\1_{g_1} \otimes \cdots \otimes \1_{g_n}$. Then Consider the following object in the category $\mathcal{C}$
\[
A=\bigoplus_{X\in \mathcal{P}_e (G)} E_X .
\]

\begin{Theorem}
	Under the previous conditions
	\begin{enumerate}
		\item $A$ is an algebra object in $\mathcal{C}$.
		\item For elements $g, g_1, \ldots, g_n \in G$, there exists an isomorphism
		\[
		\widetilde{\sigma}^A_{g,g_1,\ldots g_n} :T_g (A\otimes \1_g^{-1} \otimes \1_{g_1} \otimes \cdots \otimes \1_{g_n} )\rightarrow A\otimes \1_g \otimes \1_{gg_1} \otimes \cdots \otimes \1_{gg_n}
		\]
		such that $\left( A , \{ \widetilde{\sigma}^A_{g,g_1,\ldots g_n} \}_{g,g_1, \ldots , g_n \in G} \right)$ define a partially equivariant object in $\mathcal{C}^{\underline{G}}$.
	\end{enumerate}
\end{Theorem}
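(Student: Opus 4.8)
The plan is to treat $A$ as the categorical incarnation of the commutative algebra spanned by the idempotents indexed by the subsets $X \ni e$, exactly as in Example~\ref{categorificationofA}, and then to obtain its partially equivariant structure by the same bookkeeping used in the preceding trace computation \eqref{sigmatildotraco}. For \textbf{(1)}, I would take the unit $\eta \colon \1 \to A$ to be the structure inclusion of the summand $E_{\{e\}} = \1_e = \1$, and define the multiplication $m \colon A \otimes A \to A$ summand-wise. On each summand $E_X \otimes E_Y$ of $A \otimes A \cong \bigoplus_{X,Y \in \mathcal{P}_e(G)} E_X \otimes E_Y$, repeated tensor factors $\1_z \otimes \1_z$ are fused by the fusion isomorphisms $\Phi_{\1_z}$ and the remaining factors are rearranged by the exchange isomorphisms $\sigma^{\1_z}$, producing a canonical isomorphism $E_X \otimes E_Y \cong E_{X \cup Y}$; composing with the inclusion of the summand $E_{X \cup Y}$ (note $e \in X \cup Y$) gives $m$. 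Associativity then reduces to the identity $(X \cup Y) \cup Z = X \cup (Y \cup Z)$ together with the coherence of the fusion and exchange data, i.e.\ the commutativity of \eqref{idempotent1}, \eqref{idempotent3} and \eqref{idempotent4}; the unit axiom follows from $\{e\} \cup Y = Y$ and $\1 \otimes E_Y \cong E_Y$ via $l$. (Commutativity of $A$, although not required, likewise follows from \eqref{idempotent2}.)

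For \textbf{(2)}, I would first compute the left-hand object summand-wise. Additivity of $T_g$ distributes it over the direct sum, and the monoidal structure together with the generalised isomorphisms $\varphi^g_{\dots}$ (Lemma~\ref{lemadasunidades} and the displayed generalisation following it) gives
\[
T_g\bigl(E_X \otimes \1_{g^{-1}} \otimes \1_{g_1} \otimes \cdots \otimes \1_{g_n}\bigr) \cong E_{gX \cup \{e,\, gg_1, \ldots, gg_n\}},
\]
because $T_g$ multiplies every index on the left by $g$ and turns $\1_{g^{-1}}$ into $\1_e = \1$. Since $e \in X$ forces $g \in gX$, each index set so obtained contains $\{e,g\}$; the same is true of the target $A \otimes \1_g \otimes \1_{gg_1} \otimes \cdots \otimes \1_{gg_n} = \bigoplus_{Y \in \mathcal{P}_e(G)} E_{Y \cup \{g,\, gg_1, \ldots, gg_n\}}$. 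Comparing the two collections of summands shows that they agree as indexed families (each subset containing $\{e,g\}$ occurring with the same multiplicity on both sides, via the reindexing $X \mapsto gX$ of $\mathcal{P}_e(G)$), so I would \emph{define} $\widetilde{\sigma}^A_{g,g_1,\ldots,g_n}$ to be the resulting composite isomorphism, written as a direct sum of composites of the $\varphi$'s, the $J^g$'s and the components of $\gamma_{g,h}$ in complete analogy with formula \eqref{sigmatildotraco}; for $g = e$ this specialises to $\widetilde{\sigma}^A_e = u_A^{-1}$.

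Finally I would verify the coherence diagram \eqref{tildesigma}. Because every object in sight is a direct sum of tensor products of central idempotents and every arrow is assembled from $\varphi$, $J$ and $\gamma$, the verification splits over the summands and reduces, on each one, to the three ingredients already used for the trace: the composition law for the $\varphi$'s (Lemma~\ref{composicaodosphi}), an instance of the compatibility pentagon \eqref{pent-gamma-jota} between $\gamma$ and the $J^g$, and the associativity constraint \eqref{square-T-gamma} for $\gamma$. The main obstacle I anticipate is purely combinatorial bookkeeping, namely keeping the index sets $gX \cup \{e\}$ and their multiplicity-preserving matching synchronised with the monoidal rearrangements so that the summand-wise diagrams assemble into the single diagram \eqref{tildesigma}; the genuinely categorical content is identical to that of the trace lemma and introduces no new difficulty.
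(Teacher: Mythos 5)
Your proposal takes essentially the same route as the paper's own proof: the multiplication is defined summand-wise as the composite of fusion/exchange isomorphisms $E_X\otimes E_Y\cong E_{X\cup Y}$ with the inclusion of the summand $E_{X\cup Y}$, the unit is the inclusion of $E_{\{e\}}=\1_e$, and $\widetilde{\sigma}^A_{g,g_1,\ldots,g_n}$ is assembled via the universal property of the direct sum from the componentwise isomorphisms $\left(\varphi^g_{\ldots}\right)^{-1}$ of \cref{lemadasunidades}, in direct analogy with the trace formula (\ref{sigmatildotraco}). The two points you leave at sketch level --- the exact summand-matching bijection (your reindexing $X\mapsto gX$ needs the unit-absorption correction $X\mapsto gX\cup\{e\}$, with care about injectivity when $g^{-1}\notin X$) and the verification of diagram (\ref{tildesigma}) via \cref{composicaodosphi}, (\ref{pent-gamma-jota}) and (\ref{square-T-gamma}) --- are glossed at least as much in the paper, whose own check of (\ref{tildesigma}) is left unfinished.
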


\begin{proof}
	(i) Consider two sets $X,Y \in \mathcal{P}_e (G)$, being
	\[
	X=\{ g_1 , \ldots , g_n, h_1 , \ldots h_m \}
	\]
	and 
	\[
	Y =\{ g_1 , \ldots , g_n , k_1 , \ldots , k_p \} .
	\]
	Define
	\[
	\mu^{X,Y} : E_X \otimes E_Y \rightarrow E_{X\cup Y}
	\]
	as the composition of obvious exchange and fusion isomorphisms, as stated in Definition \ref{idempotentecentral}, of the central idempotents relative to the elements of the subsets $X$ and $Y$. Then, the universal property of the direct sum, as $A\otimes A \cong \bigoplus_{(X,Y) \in (\mathcal{P}_e (G))^2} E_X \otimes E_Y$, enables us to construct a unique multiplication morphism
	\[
	\mu :A\otimes A \rightarrow A
	\]
	such that $\mu|_{E_X \otimes E_Y} =\imath_{E_{X\cup Y}} \circ  \mu^{X,Y}$, in which $\imath_{E_{X\cup Y}}$ is the inclusion map of $E_{X\cup Y}$ into $A$. For the unit map, simply define
	\[
	\eta =\imath_{E_{\{ e\}}} :\1 =\1_e =E_{\{ e \}} \rightarrow A.
	\]
	It is straightforward to verify the associativity and unit axioms for $(A, \mu, \eta)$.
	
	(ii) Consider $X=\{ h_1 ,\ldots h_m \} \in \mathcal{P}_e (G)$ and $g,g_1 \ldots , g_n \in G$, without loss of generality, one can consider the elements $h_i$ distinct from $g^{-1}$ and $g_j$, for every $j\in \{ 1, \ldots, n \}$, otherwise, one can use the standard fusion isomorphisms to eliminate the superfluous tensor factors, the isomorphism
	\[
	\left( \varphi^g_{gh_1, \ldots, gh_m , gg_1 , \ldots , gg_n}: \right)^{-1} :T_g (\1_{h_1} \otimes \cdots \1_{h_m} \otimes \1_{g^{-1}} \otimes \1_{g_1} \otimes \cdots \otimes \1_{g_n} )\rightarrow \1_{gh_1} \otimes \cdots \1_{gh_m} \otimes \1_{g} \otimes \1_{gg_1} \otimes \cdots \otimes \1_{gg_n} .
	\]
	Then, by the universal property of the direct sum, define a unique morphism
	\[
	\widetilde{\sigma}^A_{g,g_1, \ldots g_n} :T_g (A\otimes \1_{g^{-1}} \otimes \1_{g_1} \otimes \cdots \otimes \1_{g_n} ) \rightarrow A\otimes \1_{g} \otimes \1_{gg_1} \otimes \cdots \otimes \1_{gg_n}.
	\]
	These morphisms are isomorphisms because they are defined upon componentwise isomorphisms. 
	
	Now, let us check that the family $\{ \widetilde{\sigma}^A_{g, g_1, \ldots g_n} \}_{g, g_1, \ldots g_n\in G}$ satisfy the commutativity of the diagram (\ref{tildesigma}): for $g,h\in G$ we have

\end{proof}

\subsection{The partial smash product \texorpdfstring{$\C \smp G$}{\textit{C G}}}
{\ }\\

Given a partial action $\alpha= \left( \{ A_g \}_{g\in G} , \{ \alpha_g :A_{g^{-1}} \rightarrow A_g \}_{g\in G} \right)$ of a group $G$ on an algebra $A$, one can define the partial smash product $A\smp G$ \cite{DE}. Basically, the partial smash product, as a vector space is simply the direct sum 
\[ (A\smp G)^{(0)} =\bigoplus_{g\in G} A_g \delta_g 
\]
and the multiplication is given by
\begin{equation}\label{psp}
	(a_g \delta_g)(b_h \delta_h) =\alpha_g (\alpha_{g^{-1}} (a_g) b_h) \delta_{gh} .
\end{equation}
One cannot guarantee, in general, that even this above defined  multiplication is associative. Neither is it possible to  ensure the existence of unity. Associativity of the multiplication (\ref{psp}) can be ensured by requiring that the ideals $A_g \trianglelefteq A$ be non-degenerate or idempotent. 

For the case when $\alpha$ is a unital partial action, that is, all ideals $A_g$ are generated by a central idempotent element $\1_g \in A$, then the ideals become idempotent, making the multplication (\ref{psp}) associative and the element $\1_e \delta_e$ becomes automatically the unit for this multiplication. Moreover, the formula (\ref{psp}) admits an equivalent simplified version as
\[
(a_g \delta_g)(b_h \delta_h)=a_g \alpha_g (b_h \1_{g^{-1}}) \delta_{gh}.
\]

The partial group actions on semigroupal categories can be interpreted as a categorification of partial group actions on algebras. Among the partial group actions on semigroupal categories, the unital ones are those with a richer class of examples and those which allow one to define new and relevant constructions upon these actions. Then, for the construction of a partial smash product, we will restrict ourselves to unital partial actions, that is partial actions in which the categorical ideals are generated by central idempotent objects. As the categorical analogue of the partial smash product will require additive structures, in this section we shall consider $\mathcal{C}$ being a $\Bbbk$-linear abelian and monoidal category.

\begin{Definition}
	Let \(T = \left( \{ T_g \}_{g \in G}, \{ \gamma_{g,h} \}_{g, h \in G}, u \right)\) be a unital partial action of a group \(G\) on a $\Bbbk$-linear, abelian and monoidal category \C\ . The partial smash product is the category \(\C \smp G \coloneqq \bigoplus_{g \in G} \C_g \delta_g \) whose objects are
	\begin{align*}
		(\C \smp G)^{(0)} &= \left\{ \bigoplus_{g \in G} X_g \delta_g \mathrel{} \middle | \mathrel{} X_g \in  (\C_g)^{(0)} \right\}\\
		\intertext{and morphisms}
		(\C \smp G)^{(1)} &= \left\{ \bigoplus_{g \in G} (f_g, g) \colon \bigoplus_{g\in G} X_g \delta_g \longto \bigoplus_{g \in G} X'_g \delta_g \mathrel{} \middle | \mathrel{} f_{g} \colon X_g \longto X'_g \right\}
	\end{align*}
	with monoidal structure given by
	\begin{equation}\label{smash_tensor_product_objects}
		( X_{g} \delta_{g} ) \boxtimes ( X_{h} \delta_{h} )
		\coloneqq ( X_{g} \otimes T_{g} (X_{h} \otimes \1_{g^{-1}} )) ) \delta_{gh}
	\end{equation}        
	and
	\begin{equation}\label{smash_tensor_product_morphisms}
		( f_g, g ) \boxtimes ( f_h, h ) \coloneqq ( f_g \otimes T_{g}(f_{h} \otimes \1_{g^{-1}}), g  h ).
	\end{equation}
	The unit is \( \1_{\C \smp G} \coloneqq \1_{e} \delta_{e} = \1_\C \delta_{e} \)
\end{Definition}

For the case of $T:\underline{G} \rightarrow \text{Aut}_{\otimes} (\mathcal{C})$ being a global action of $G$ by monoidal autoequivalences, then the partial smash product $\mathcal{C} \underline{\rtimes} G$ coincides with the semidirect product $\mathcal{C}[G] =\bigoplus_{g\in G} \mathcal{C}$, introduced by D. Tambara in \cite{Tambara}.

\begin{Proposition}
	Let \(T = \left( \{ T_g \}_{g \in G}, \{ \gamma_{g,h} \}_{g, h \in G}, u \right) \) be a unital partial action of a group \(G\) on a monoidal category \C\ . Then, the partial smash product, \( \left( \C \smp G , \boxtimes, \1_{\C \smp G} \right) \), defines a monoidal category. 
\end{Proposition}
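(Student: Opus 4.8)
The plan is to produce, on homogeneous objects $X_g\delta_g$ (with $X_g\in\C_g$), a natural associativity constraint and left/right unit constraints, and then to verify the pentagon and triangle axioms; everything extends to arbitrary objects by additivity, since $\C\smp G=\bigoplus_{g\in G}\C_g\delta_g$ is a direct sum of the ideals $\C_g$ with grading-preserving morphisms and both $\ot$ and each $T_g$ are additive. I would begin by checking that $\boxtimes$ is a well-defined bifunctor. For $X_g\in\C_g$ and $X_h\in\C_h$ one has $X_h\ot\1_{g^{-1}}\in\C_{g^{-1}}\cap\C_h$, hence $T_g(X_h\ot\1_{g^{-1}})\in\C_g\cap\C_{gh}$ by \cref{T_g(intesect)-in-intersect}; tensoring on the left by $X_g$ leaves the result in the ideal $\C_{gh}$, so \eqref{smash_tensor_product_objects} lands in $\C_{gh}\delta_{gh}$ as required. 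Functoriality and respect for composition then follow at once from bifunctoriality of $\ot$ and functoriality of the $T_g$.

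To build the associator, note that on a homogeneous triple the two bracketings have underlying objects
\[
X_g\ot T_g(X_h\ot\1_{g^{-1}})\ot T_{gh}(X_k\ot\1_{(gh)^{-1}})
\quad\text{and}\quad
X_g\ot T_g\bigl(X_h\ot T_h(X_k\ot\1_{h^{-1}})\ot\1_{g^{-1}}\bigr),
\]
both lying in $\C_g\cap\C_{gh}\cap\C_{ghk}$. I would link them by an explicit composite of canonical isomorphisms: duplicate and reposition $\1_{g^{-1}}$ using the fusion and exchange isomorphisms of \cref{idempotentecentral}, separate the outer $T_g$ via $J^g$, rewrite $T_h(X_k\ot\1_{h^{-1}})\ot\1_{g^{-1}}$ as $T_h(X_k\ot\1_{h^{-1}}\ot\1_{(gh)^{-1}})$ with the unit isomorphisms of \cref{lemadasunidades}, collapse $T_gT_h$ to $T_{gh}$ by $\gamma_{g,h}$, and finally absorb the surplus idempotents $\1_g,\1_{gh}$ into the factor $T_g(X_h\ot\1_{g^{-1}})\in\C_g\cap\C_{gh}$. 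Naturality in each of $X_g,X_h,X_k$ is inherited from that of the constituents $J^g$, $\gamma_{g,h}$, the exchange isomorphisms, and the $\varphi$'s.

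For the unit, the left constraint is built from $u$: since $\1\delta_e\boxtimes X_g\delta_g=(\1\ot T_e(X_g\ot\1))\delta_g$, the unit isomorphisms of $\C$ together with $u_{X_g}^{-1}\colon T_e(X_g)\to X_g$ give an isomorphism onto $X_g\delta_g$. The right constraint is built from $\varphi^g$ of \cref{partialcentralidempotents}: one has $X_g\delta_g\boxtimes\1\delta_e=(X_g\ot T_g(\1_{g^{-1}}))\delta_g\Iso(X_g\ot\1_g)\delta_g\Iso X_g\delta_g$, the first isomorphism being $X_g\ot(\varphi^g)^{-1}$ and the second the absorption of $\1_g$, valid because $X_g\in\C_g$.

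The substantive step, and the main obstacle, is the pentagon axiom. After reducing to homogeneous quadruples $X_g\delta_g,X_h\delta_h,X_k\delta_k,X_l\delta_l$ it becomes a single large diagram, which I would split into regions each commuting for a structural reason: the pentagon \eqref{pent-ax} of $\C$; the hexagons \eqref{hex-J-a} saying each $T_g$ is monoidal; the cocycle condition \eqref{square-T-gamma} for $\gamma$; the compatibility \eqref{pent-gamma-jota} of $\gamma$ with the $J$'s; the coherence of the unit isomorphisms from \cref{composicaodosphi}; and the idempotent axioms \eqref{idempotent1}--\eqref{idempotent4} controlling the insertion and absorption of the $\1$'s. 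The real difficulty is purely organizational: one must make the group-level associativity of $\gamma$ match the object-level associativity of $\C$ while tracking the auxiliary idempotents, much as in the computation already carried out for \eqref{pent-tau-gamma}. The triangle axiom is verified analogously, this time invoking the unit conditions \eqref{(gamma_g1)_X-u_(T_g(X))} and \eqref{(gamma_1g)_X-T_g(u_X)} of the partial action.
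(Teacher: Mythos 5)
Your proposal follows essentially the same route as the paper: the tensor product is checked to land in the right ideal using the partial-action axioms, the associator is built by exactly the same chain (inserting $\1_g\otimes\1_{gh}$, converting via the $\varphi$'s of \cref{lemadasunidades}, splitting/combining with the $J$'s and $\gamma_{g,h}$, then absorbing the surplus idempotents), and the unit constraints are the same $L_{X_g\delta_g}=u^{-1}_{X_g}$ and $R_{X_g\delta_g}=X_g\otimes(\varphi^g)^{-1}$. Note that the paper itself omits the pentagon and triangle verifications (appealing to the length of Tambara's computation already in the global case), so your outline of how those diagrams would decompose into instances of \cref{pent-ax}, \cref{hex-J-a}, \cref{square-T-gamma}, \cref{pent-gamma-jota} and the idempotent axioms is at least as complete as the published argument.
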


\begin{proof} Even when the monoidal category $\mathcal{C}$ is supposed to be strict, the form of the tensor product defined in (\ref{smash_tensor_product_objects}) and (\ref{smash_tensor_product_morphisms}) gives rise to nontrivial associator and unit morphisms.
	
	For the associator, consider the product
	\[
	( X_{g} \delta_{g} \boxtimes X_{h} \delta_{h} ) \boxtimes X_{l} \delta_{l}
	= ( X_{g} \otimes T_{g} ( X_{h} \otimes \1_{g^{-1}} ) ) \delta_{gh} \boxtimes X_{l} \delta_{l} 
	= X_{g} \otimes T_{g} ( X_{h} \otimes \1_{g^{-1}} ) \otimes T_{gh} ( X_{l} \otimes \1_{(gh)^{-1}} ) \delta_{ghl} .
	\]
	On the other hand
	\[
	X_{g} \delta_{g} \boxtimes ( X_{h} \delta_{h} \boxtimes X_{l} \delta_{l} ) =X_{g} \delta_{g} \boxtimes ( X_{h} \otimes T_{h} ( X_{l} \otimes \1_{h^{-1}} ) ) \delta_{hl} =X_{g} \otimes T_{g} ( X_{h} \otimes T_{h} ( X_{l} \otimes \1_{h^{-1}} ) \otimes \1_{g^{-1}} ) \delta_{ghl} .
	\]
	Then, we have the following sequence of isomorphisms:\\
	\begin{eqnarray*}
		& \, & X_{g} \otimes T_{g} ( X_{h} \otimes \1_{g^{-1}} ) \otimes T_{gh} ( X_{l} \otimes \1_{(gh)^{-1}} ) \delta_{ghl} =\\
		& = & X_{g} \otimes T_{g} ( X_{h} \otimes \1_{g^{-1}} ) \otimes T_{gh} ( X_{l} \otimes \1_{(gh)^{-1}} ) \otimes \1_{g} \otimes \1_{gh} \delta_{ghl} \\
		& \cong & X_{g} \otimes T_{g} ( X_{h} \otimes \1_{g^{-1}} ) \otimes T_{gh} ( X_{l} \otimes \1_{(gh)^{-1}} ) \otimes T_{gh}(\1_{h^{-1}} \otimes \1_{(gh)^{-1}}) \delta_{ghl} \\
		& \cong & X_{g} \otimes T_{g} ( X_{h} \otimes \1_{g^{-1}} ) \otimes T_{gh} ( X_{l} \otimes \1_{h^{-1}} \otimes \1_{(gh)^{-1}} ) \delta_{ghl} \\
		& \cong & X_{g} \otimes T_{g} ( X_{h} \otimes \1_{g^{-1}} ) \otimes T_{g} ( T_{h} ( X_{l} \otimes \1_{h^{-1}} \otimes \1_{(gh)^{-1}} ) ) \delta_{ghl} \\
		& \cong & X_{g} \otimes T_{g} ( X_{h} \otimes \1_{g^{-1}} ) \otimes T_{g} ( T_{h} ( X_{l} \otimes \1_{h^{-1}} ) \otimes T_{h} ( \1_{(gh)^{-1}} \otimes \1_{h^{-1}} ) ) \delta_{ghl} \\
		& \cong & X_{g} \otimes T_{g} ( X_{h} \otimes \1_{g^{-1}} ) \otimes T_{g} ( T_{h} ( X_{l} \otimes \1_{h^{-1}} ) \otimes \1_{g^{-1}} ) \delta_{ghl} \\
		& \cong & X_{g} \otimes T_{g} ( X_{h} \otimes T_{h} ( X_{l} \otimes \1_{h^{-1}} ) \otimes \1_{g^{-1}} ) \delta_{ghl} .
	\end{eqnarray*}
	Then the associator can be written explicitly as
	\begin{eqnarray*}
		A_{X_g \delta_g ,X_h \delta_h , X_l \delta_l} & = & \left( 
		X_{g} \otimes J^g_{X_{h} \otimes \1_{g^{-1}} , T_{h} ( X_{l} \otimes \1_{h^{-1}} ) \otimes \1_{g^{-1}}} \right)  \circ \\
		& \circ & \left(  X_{g} \otimes T_{g} ( X_{h} \otimes \1_{g^{-1}} ) \otimes T_{g} ( T_{h} ( X_{l} \otimes \1_{h^{-1}} ) \otimes (\varphi^h_{g^{-1}})^{-1}) \right) \\
		& \circ & \left( X_{g} \otimes T_{g} ( X_{h} \otimes \1_{g^{-1}} ) \otimes T_{g} ((J^h)^{-1}_{X_{l} \otimes \1_{h^{-1}} , \1_{h^{-1}} \otimes \1_{(gh)^{-1}}}) \right) \\
		& \circ & \left(  X_{g} \otimes T_{g} ( X_{h} \otimes \1_{g^{-1}} ) \otimes \left( \gamma_{g,h} \right)^{-1}_{X_{l} \otimes \1_{h^{-1}} \otimes \1_{(gh)^{-1}}}  \right) \\
		& \circ & \left(  X_{g} \otimes T_{g} ( X_{h} \otimes \1_{g^{-1}} ) \otimes J^{gh}_{X_{l} \otimes \1_{(gh)^{-1}} ,\1_{h^{-1}} \otimes \1_{(gh)^{-1}} } \right) \\
		& \circ & \left(  X_{g} \otimes T_{g} ( X_{h} \otimes \1_{g^{-1}} ) \otimes T_{gh} ( X_{l} \otimes \1_{(gh)^{-1}} ) \otimes \varphi^{gh}_{g}\1_{g} \right)
	\end{eqnarray*}
	
	The left and right unit morphisms,           
	\begin{align*}
		L \colon \1_{\C \smp G} \boxtimes \esp \Rightarrow Id_{\C \smp G}(\esp) \colon \C \smp G \longto \C \smp G\\
		\intertext{and}
		R \colon \esp \boxtimes \1_{\C \smp G} \Rightarrow Id_{\C \smp G}(\esp) \colon \C \smp G \longto \C \smp G
	\end{align*}
	are given, respectively by
	\[
	\1_{\C \smp G} \boxtimes X_g \delta_g = 
	\1_e \delta_e  \boxtimes X_g \delta_g =  
	(\1_\C \otimes T_e( X_g \otimes \1_\C ) ) \delta_g =T_e (X_g) \delta_g \cong X_g \delta_g
	\]
	and
	\[
	X_g \delta_g \boxtimes \1_{\C \smp G} = 
	X_g \delta_g  \boxtimes \1_e \delta_e =  
	(X_g \otimes T_g ( \1_\C \otimes \1_{g^{-1}} )) \delta_g = (X_g \otimes T_g (\1_{g^{-1}})) \delta_g  \Iso X_g \delta_g.
	\]
	or, explicitly
	\[
	L_{X_g \delta_g} =u^{-1}_{X_g}, \qquad \text{ and } \qquad R_{X_g \delta_g} =X_g \otimes (\varphi^g)^{-1} .
	\]
	
	For sake of brevity, we shall omit the proofs of the triangle and the pentagon axioms for the monoidal structure of the category $\mathcal{C} \smp G$, consisting only of a long and exhaustive calculation with composition of morphims which cannot be properly displayed in this paper and it is not very instructive. Even in the global case, where the associator only consists of the composition of three morphisms instead of six, as in the partial case, verification of the pentagon axiom took four pages (see \cite{Tambara}, Section 8).
\end{proof}

Given a partial action \( T \) from a group \(G\) on a monoidal category \C, one can define a functor
\begin{equation}\label{eq:rep-par-no-smash}
	\begin{split}
		\pi_{0} \colon \underline{G} & \longto \C \smp G\\
		g & \longmapsto \1_{g} \delta_{g}
	\end{split} ,
\end{equation}
in which $\underline{G}$ as the monoidal category whose objects are the elements of $G$, the only morphisms are the identities on objects and the tensor product is the product among the elements of the group.  

\begin{Proposition}
	The functor presented in Equation~\eqref{eq:rep-par-no-smash} satisfy the following properties:
	\begin{enumerate}[(PR1)]
		\item \( \pi_{0}(e) = \1_{\C \smp G}, \)\label{prop:def-rep-par-i}
		\item \( \pi_{0} (g) \boxtimes \pi_{0} (h) \boxtimes \pi_{0} (h^{-1}) \Iso \pi_{0} (g h) \boxtimes \pi_{0} (h^{-1}), \)\label{prop:def-rep-par-ii}
		\item \( \pi_{0}(g^{-1}) \boxtimes \pi_{0}(g) \boxtimes \pi_{0}(h) \Iso \pi_{0}(g^{-1}) \boxtimes \pi_{0}(gh), \)\label{prop:def-rep-par-iii}
		\item \( \pi_{0}(g) \boxtimes \pi_{0}(g^{-1}) \boxtimes \pi_{0}(g) \Iso \pi_{0}(g), \)\label{prop:def-rep-par-iv}
	\end{enumerate}
	for any \(g,h \in G\).
\end{Proposition}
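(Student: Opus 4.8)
The plan is to verify each of the four properties by unwinding the definition of the tensor product $\boxtimes$ in \eqref{smash_tensor_product_objects} and reducing both sides of each claimed isomorphism to a common object of $\C \smp G$. Since every assertion is of the form $\Iso$, no coherence or naturality has to be checked: it suffices to exhibit the isomorphisms, using the monoidal structure on $\C \smp G$ already established in the preceding proposition to make sense of the triple products. Property (PR1) is immediate from the definitions, since $\pi_0(e) = \1_e \delta_e = \1_\C \delta_e = \1_{\C \smp G}$.

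For (PR2) I would first compute $\pi_0(g) \boxtimes \pi_0(h) = (\1_g \otimes T_g(\1_h \otimes \1_{g^{-1}})) \delta_{gh}$ and simplify it using the exchange isomorphism $\sigma$ to commute the central idempotents, the isomorphism $\varphi^g_{gh} \colon \1_g \otimes \1_{gh} \iso T_g(\1_{g^{-1}} \otimes \1_h)$ of \cref{lemadasunidades}, and the fusion isomorphism $\Phi_{\1_g}$ of \cref{idempotentecentral} to absorb the repeated factor $\1_g$, obtaining $(\1_g \otimes \1_{gh}) \delta_{gh}$. Tensoring on the right by $\pi_0(h^{-1})$ and applying \cref{lemadasunidades} in the form $\varphi^{gh}_g \colon \1_{gh} \otimes \1_g \iso T_{gh}(\1_{(gh)^{-1}} \otimes \1_{h^{-1}})$, together with further fusion isomorphisms, then yields $(\1_g \otimes \1_{gh}) \delta_g$. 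On the other side, $\pi_0(gh) \boxtimes \pi_0(h^{-1}) = (\1_{gh} \otimes T_{gh}(\1_{h^{-1}} \otimes \1_{(gh)^{-1}})) \delta_g$ reduces through the same instance of \cref{lemadasunidades} to $(\1_{gh} \otimes \1_g) \delta_g$, which is isomorphic to the left-hand side by exchange.

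Property (PR3) I would treat symmetrically: here $\pi_0(g^{-1}) \boxtimes \pi_0(g) = (\1_{g^{-1}} \otimes T_{g^{-1}}(\1_g \otimes \1_g)) \delta_e$ collapses, via a fusion isomorphism and the isomorphism $\varphi^{g^{-1}} \colon \1_{g^{-1}} \iso T_{g^{-1}}(\1_g)$ of \cref{partialcentralidempotents}, to $\1_{g^{-1}} \delta_e$; tensoring by $\pi_0(h)$ and using $T_e \cong \Id$ (from $u$) gives $(\1_{g^{-1}} \otimes \1_h) \delta_h$, while the right-hand side $\pi_0(g^{-1}) \boxtimes \pi_0(gh)$ reduces to the same object through $\varphi^{g^{-1}}_h \colon \1_{g^{-1}} \otimes \1_h \iso T_{g^{-1}}(\1_g \otimes \1_{gh})$. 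Finally, for (PR4), $\pi_0(g) \boxtimes \pi_0(g^{-1})$ collapses to $\1_g \delta_e$ via $\varphi^g$ and a fusion isomorphism, and tensoring by $\pi_0(g)$ while invoking $T_e \cong \Id$ returns $\1_g \delta_g = \pi_0(g)$.

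The only genuine work is the bookkeeping of the group-element subscripts and the correct identification of which instance of \cref{lemadasunidades,partialcentralidempotents} applies at each step; there is no conceptual obstacle, because all four statements are isomorphism statements and the absorption of duplicated idempotent factors is always furnished by the fusion and exchange isomorphisms of \cref{idempotentecentral}. I would, however, note at the outset that a bracketing of each triple product must be fixed and that the resulting isomorphisms are independent of this choice up to the associator of $\C \smp G$, so that each claimed $\Iso$ is well-posed.
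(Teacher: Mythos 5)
Your proposal is correct and takes essentially the same route as the paper: both proofs unwind the definition of $\boxtimes$, collapse the resulting objects via the isomorphisms of \cref{lemadasunidades} and \cref{partialcentralidempotents} together with fusion and exchange isomorphisms, reducing both sides of (PR2) to $(\1_g \otimes \1_{gh})\delta_g$ up to exchange and both sides of (PR3) to $(\1_{g^{-1}} \otimes \1_h)\delta_h$. The only divergences are cosmetic: you bracket the triple product in (PR2) on the left where the paper brackets on the right, and you prove (PR4) by direct computation, whereas the paper deduces it from (PR3), (PR1) and the unit constraint of $\C \smp G$ --- both equally valid.
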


\begin{proof}
	\ref{prop:def-rep-par-i}:
	\[
	\pi_{0}(e) = \1_{e} \delta_{e} = \1_{\C \smp G}
	\]
	Given any \(g,h \in G\), we have that
	
	\ref{prop:def-rep-par-ii}:
	\begin{align*}
		\pi_{0} (g) \boxtimes \pi_{0} (h) \boxtimes \pi_{0} (h^{-1})
		& = (\1_{g} \delta_{g}) \boxtimes (\1_{h} \delta_{h}) \boxtimes (\1_{h^{-1}} \delta_{h^{-1}})\\
		& = (\1_{g} \delta_{g}) \boxtimes ((\1_{h} \otimes T_h(\1_{h^{-1}} \otimes \1_{h^{-1}})) \delta_{e})\\
		& \Iso (\1_{g} \delta_{g}) \boxtimes(\1_{h} \delta_{e})\\
		& = (\1_{g} \otimes T_{g}(\1_{h} \otimes 1_{g^{-1}})) \delta_{g}\\
		& \Iso (\1_{g} \otimes \1_{gh}) \delta_{g}\\
		\intertext{and}
		\pi_{0} (g h) \boxtimes \pi_{0} (h^{-1})
		& = (\1_{gh} \delta_{gh}) \boxtimes (\1_{h^{-1}} \delta_{h^{-1}})\\
		& = (\1_{gh} \otimes T_{gh}(\1_{h^{-1}} \otimes \1_{(gh)^{-1}})) \delta_{g}\\
		& = (\1_{gh} \otimes \1_{g} ) \delta_{g}
	\end{align*}
	
	\ref{prop:def-rep-par-iii}:
	\begin{align*}
		\pi_{0}(g^{-1}) \boxtimes \pi_{0}(g) \boxtimes \pi_{0}(h)
		& = (\1_{g^{-1}} \delta_{g^{-1}}) \boxtimes (\1_{g} \delta_{g}) \boxtimes (\1_{h} \delta_{h})\\
		& = ((\1_{g^{-1}} \otimes T_{g^{-1}}(\1_{g} \otimes \1_{g} )) \delta_{e}) \boxtimes (\1_{h} \delta_{h})\\
		& \Iso (\1_{g^{-1}}  \otimes T_{e}(\1_{h} \otimes \1_{e})) \delta_{h}\\
		& \Iso (\1_{g^{-1}} \otimes \1_{h} ) \delta_{h}\\
		\intertext{and}
		\pi_{0}(g^{-1}) \boxtimes \pi_{0}(gh)
		& = (\1_{g^{-1}} \delta_{g^{-1}}) \boxtimes (\1_{gh} \delta_{gh})\\
		& = (\1_{g^{-1}} \otimes T_{g^{-1}}(\1_{gh} \otimes \1_{g})) \delta_{h}\\
		& \Iso (\1_{g^{-1}} \otimes \1_{h} ) \delta_{h}
	\end{align*}
	\ref{prop:def-rep-par-iv}: It follows directly from \ref{prop:def-rep-par-iii}, \ref{prop:def-rep-par-i} and from the monoidal structure of $\mathcal{C} \underline{\rtimes} G$.
\end{proof}

Given a partial action \( T \) from a group \(G\) on a monoidal category \C, one can define a functor
\begin{equation}\label{eq:funtor-no-smash}
	\begin{split}
		\phi_{0} \colon \C & \longto \C \smp G\\
		(\C)^{(0)}  \ni     X & \longmapsto X \delta_{e}\\
		\C^{(1)} \ni     f & \longmapsto (f, e)
	\end{split}
\end{equation}

It is easy to see that the functor $\phi_0$ is monoidal. Indeed,
\begin{align*}
	\phi_0 (X)\boxtimes \phi_0 (Y) & = (X\delta_e )\boxtimes (Y\delta_e) \\
	& = (X\otimes T_e (Y\otimes \1_e))\delta_e \\
	& \Iso (X\otimes Y) \delta_e \\
	& \Iso \phi_0 (X\otimes Y) ,
\end{align*}
and
\[
\phi_0 (\1_{\mathcal{C}}) =\1_{\mathcal{C}} \delta_e =\1_{\mathcal{C} \underline{\rtimes}G} .
\]

\begin{Proposition}
	The functors $\pi_0 :\underline{G} \longto \C \smp G$, defined by (\ref{eq:rep-par-no-smash}) and $\phi_0 : \mathcal{C} \longto \C \smp G$ satisfy the following indentities, for each object $X\in \mathcal{C}$ and each element $g\in G$:
	\begin{enumerate}
		\item $\phi_0 (X) \boxtimes \pi_0 (g) \boxtimes \pi_0(g^{-1}) \cong \pi_0 (g) \boxtimes \pi_0(g^{-1}) \boxtimes \phi_0 (X)$.
		\item $\pi_0 (g) \boxtimes \phi_0 (X) \boxtimes \pi_0(g^{-1}) \cong  \phi_0(T_g (X \otimes \1_{g^{-1}}))$.
	\end{enumerate}
\end{Proposition}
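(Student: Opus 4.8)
The plan is to reduce each claimed isomorphism to a computation inside $\C$ itself. Recall that a morphism in $\C \smp G$ is a family $\bigoplus_g (f_g,g)$ with $f_g\colon X_g\to X_g'$, so it preserves the $\delta$-grading; consequently an isomorphism $X_g\delta_g\Iso X'_{g'}\delta_{g'}$ can exist only when $g=g'$, in which case it is precisely a $\C$-isomorphism $X_g\Iso X'_g$. In both (i) and (ii) every product appearing on either side is supported on $\delta_e$ (since $e\cdot g\cdot g^{-1}=g\cdot g^{-1}\cdot e=g\cdot e\cdot g^{-1}=e$), so it suffices to exhibit the corresponding isomorphisms of underlying objects of $\C$. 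The only tools needed are the defining formula \eqref{smash_tensor_product_objects} for $\boxtimes$, the natural isomorphism $u\colon\Id_\C\Iso T_e$, the fusion and exchange isomorphisms $\Phi_{\1_g},\sigma^{\1_g}$ of the central idempotents, the isomorphism $\varphi^g\colon\1_g\to T_g(\1_{g^{-1}})$ of \cref{partialcentralidempotents}, the monoidal constraint $J^g$ of $T_g$, and the absorption property $Y\Iso\1_g\otimes Y$ valid for any $Y\in\C_g=\overline{\1_g\otimes\C}$.

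For (i), I would expand the left-hand side first. Using \eqref{smash_tensor_product_objects} and $T_e\Iso\Id_\C$ gives $\phi_0(X)\boxtimes\pi_0(g)=(X\otimes T_e(\1_g\otimes\1))\delta_g\Iso(X\otimes\1_g)\delta_g$; tensoring on the right by $\pi_0(g^{-1})$ and applying first the fusion isomorphism $\1_{g^{-1}}\otimes\1_{g^{-1}}\Iso\1_{g^{-1}}$ and then the isomorphism $\varphi^g$ of \cref{partialcentralidempotents} identifying $T_g(\1_{g^{-1}})$ with $\1_g$, together with $\1_g\otimes\1_g\Iso\1_g$, yields $(X\otimes\1_g)\delta_e$. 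For the right-hand side the same two reductions give $\pi_0(g)\boxtimes\pi_0(g^{-1})\Iso\1_g\delta_e$ (the computation already appearing in the proof of (PR2)), and then $\1_g\delta_e\boxtimes\phi_0(X)\Iso(\1_g\otimes X)\delta_e$; one final application of the exchange isomorphism $\sigma^{\1_g}_X$ rewrites $\1_g\otimes X\Iso X\otimes\1_g$. Both sides are therefore isomorphic to $(X\otimes\1_g)\delta_e$, which proves (i).

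For (ii), I would compute $\pi_0(g)\boxtimes\phi_0(X)=(\1_g\otimes T_g(X\otimes\1_{g^{-1}}))\delta_g$ and immediately absorb the leading $\1_g$ using that $T_g(X\otimes\1_{g^{-1}})\in\C_g$, obtaining $T_g(X\otimes\1_{g^{-1}})\delta_g$. Tensoring on the right by $\pi_0(g^{-1})$ produces $\bigl(T_g(X\otimes\1_{g^{-1}})\otimes T_g(\1_{g^{-1}}\otimes\1_{g^{-1}})\bigr)\delta_e$; here I would use the monoidal constraint $J^g$ to combine the two $T_g$-factors into $T_g\bigl((X\otimes\1_{g^{-1}})\otimes(\1_{g^{-1}}\otimes\1_{g^{-1}})\bigr)$ and then collapse the redundant copies of $\1_{g^{-1}}$ via the fusion isomorphism, arriving at $T_g(X\otimes\1_{g^{-1}})\delta_e=\phi_0(T_g(X\otimes\1_{g^{-1}}))$, as required.

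The computations are entirely straightforward chains of the isomorphisms listed above, so there is no genuine conceptual obstacle; the only point requiring care is bookkeeping, namely deciding at each step whether to eliminate a unit by absorption in $\C_g$, fuse it via $\Phi$, transport it through $T_g$ via $J^g$, or commute it via $\sigma$. Since the statement asserts only the existence of isomorphisms, and not the commutativity of any coherence diagram, no further compatibility needs to be verified; in particular part (i) may alternatively be deduced directly from properties (PR2)--(PR4) of $\pi_0$ together with the computation $\pi_0(g)\boxtimes\pi_0(g^{-1})\Iso\1_g\delta_e$.
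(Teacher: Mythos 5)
Your proposal is correct and follows essentially the same route as the paper's proof: expand each triple product via the defining formula \eqref{smash_tensor_product_objects}, then reduce using $u\colon\Id_\C\iiso T_e$, the fusion isomorphism $\1_{g^{-1}}\otimes\1_{g^{-1}}\Iso\1_{g^{-1}}$, the isomorphism $\varphi^g$, and unit absorption in $\C_g$, so that both sides of (i) land on $(X\otimes\1_g)\delta_e$ up to the exchange isomorphism and (ii) collapses to $\phi_0(T_g(X\otimes\1_{g^{-1}}))$. The only differences are immaterial bookkeeping choices (you parenthesize the triple products differently and invoke $J^g$ in (ii) where the paper avoids it by evaluating the inner product first), and you in fact make explicit the final step $\1_g\otimes X\Iso X\otimes\1_g$ via $\sigma^{\1_g}_X$ that the paper leaves implicit.
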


\begin{proof} For item (i), on one hand we have,
	\begin{align*}
		\phi_0 (X) \boxtimes \pi_0 (g) \boxtimes \pi_0(g^{-1})
		& =    X \delta_e \boxtimes \1_g \delta_g \boxtimes \1_{g^{-1}} \delta_{g^{-1}}\\
		& =    X \delta_e \boxtimes (\1_g \otimes T_g(\1_{g^{-1}} \otimes \1_{g^{-1}})) \delta_e\\
		& \Iso X \delta_e \boxtimes \1_g \delta_e\\
		& =   (X \otimes T_e ( \1_g \otimes \1_e )) \delta_e \\
		& \Iso(X \otimes \1_g ) \delta_e ,\\
		\intertext{and, on the other hand}
		\pi_0 (g) \boxtimes \pi_0(g^{-1}) \boxtimes \phi_0 (X)
		& =    \1_g \delta_g \boxtimes \1_{g^{-1}} \delta_{g^{-1}} \boxtimes X \delta_e \\
		& =   (\1_g \otimes T_g (\1_{g^{-1}} \otimes \1_{g^{-1}} ))\delta_e \boxtimes X \delta_e \\
		& \Iso \1_g \delta_e \boxtimes X \delta_e \\
		& =   (\1_g \otimes T_e ( X \otimes \1_e ) ) \delta_e \\
		& \Iso(\1_g \otimes X ) \delta_e .\\
	\end{align*}
	
	Now, for item (ii),
	\begin{align*}
		\pi_0 (g) \boxtimes \phi_0 (X) \boxtimes \pi_0(g^{-1})
		& =    \1_g \delta_g \boxtimes X \delta_e \boxtimes \1_{g^{-1}} \delta_{g^{-1}}\\
		& =    \1_g \delta_g \boxtimes (X \otimes T_e (\1_{g^{-1}} \otimes \1_e ) \delta_{g^{-1}} \\
		& \Iso \1_g \delta_g \boxtimes (X \otimes \1_{g^{-1}} ) \delta_{g^{-1}} \\
		& =   (\1_g \otimes T_g (X \otimes \1_{g^{-1}} \otimes \1_{g^{-1}}) ) \delta_e \\
		& \Iso(T_g (X \otimes \1_{g^{-1}}) ) \delta_e \\
		& =    \phi_0(T_g (X \otimes \1_{g^{-1}})) .\qedhere
	\end{align*}
\end{proof}

\begin{Definition}
	Consider a group $G$, two monoidal categories, $\mathcal{C}$ and $\mathcal{D}$, and a unital partial action 
	\[
	T = \left( \{ T_g \}_{g \in G}, \{ \gamma_{g,h} \}_{g, h \in G}, u \right)
	\]
	of $G$ on $\mathcal{C}$. A covariant pair $(\phi , \pi)$  related to $\mathcal{D}$ is a pair of functors $\phi :\mathcal{C} \rightarrow \mathcal{D}$ and $\pi :\underline{G} \rightarrow \mathcal{D}$ such that
	\begin{enumerate}[({CV}1)]
		\item The functor $\phi :\mathcal{C} \rightarrow \mathcal{D}$ is monoidal.
		\item The functor $\pi :\underline{G} \rightarrow \mathcal{D}$ satisfies
		\begin{enumerate}
			\item \( \pi (e) = \1_{\mathcal{D}}, \)
			\item \( \pi (g) \bar{\otimes} \pi (h) \bar{\otimes} \pi (h^{-1}) \Iso \pi (g h) \bar{\otimes} \pi (h^{-1}), \) , for all $g,h\in G$.
			\item \( \pi (g^{-1}) \bar{\otimes} \pi (g) \bar{\otimes} \pi (h) \Iso \pi (g^{-1}) \bar{\otimes} \pi (gh), \), for all $g,h \in G$.
		\end{enumerate}
		\item The functors $\phi$ and $\pi$ satisfy the following compatibility relations, for all object $X\in \mathcal{C}$ and for all element $g\in G$:
		\begin{enumerate}
			\item $\phi (X) \bar{\otimes} \pi (g) \bar{\otimes} \pi (g^{-1}) \cong \pi (g) \bar{\otimes} \pi (g^{-1}) \bar{\otimes} \phi (X)$.
			\item $\pi (g) \bar{\otimes} \phi (X) \bar{\otimes} \pi (g^{-1}) \cong  \phi (T_g (X \otimes \1_{g^{-1}}))$.
		\end{enumerate}
	\end{enumerate}
\end{Definition}

\begin{Theorem}\label{parcovariante}
	Given a covariant pair \((\phi, \pi)\) related with a category \(\mathcal{D}\), there exists a monoidal functor
	\[
	\Psi \colon \C \smp G \longto \mathcal{D}
	\]
	such that the following diagram commutes:
	\begin{center}
		\begin{tikzpicture}
		\path
		(1,1) node (12) {\( \C \smp G \)}
		(0,1) node (21) {\( \C \)}
		(2,1) node (23) {\( \underline{G} \)}
		(1,0) node (32) {\( \mathcal{D} \)}
		[mor]
		(21) edge node [above] {\( \phi_0 \)} (12)
		(21) edge node [left ] {\( \phi   \)} (32)
		(23) edge node [above] {\( \pi _0 \)} (12)
		(23) edge node [right] {\( \pi    \)} (32)
		;
		\path [dashed,mor]
		(12) edge node [right] {\( \Psi \)} (32)
		;
		\end{tikzpicture}
	\end{center}
\end{Theorem}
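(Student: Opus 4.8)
The statement is a universal property of $\C\smp G$, so the plan is to build $\Psi$ by hand from its forced values on generators. The key observation is that every elementary object factors as
\[
X_g\delta_g\;\Iso\;\phi_0(X_g)\boxtimes\pi_0(g)
\]
(indeed $\phi_0(X_g)\boxtimes\pi_0(g)=(X_g\otimes T_e(\1_g\otimes\1))\delta_g\Iso X_g\delta_g$ since $X_g\in(\C_g)^{(0)}$), while a general object is a biproduct of such. Hence, once $\Psi$ is required to be monoidal with $\Psi\circ\phi_0=\phi$ and $\Psi\circ\pi_0\cong\pi$, its value is forced:
\[
\Psi(X_g\delta_g)\;:=\;\phi(X_g)\bar{\otimes}\pi(g),\qquad \Psi\big((f_g,g)\big):=\phi(f_g)\bar{\otimes}\mathrm{id}_{\pi(g)},
\]
extended to biproducts additively (the target category being additive, as in the ambient setting of this section). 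Functoriality is then immediate from that of $\phi$ and bifunctoriality of $\bar{\otimes}$.

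\textbf{Commutativity of the triangle.} For $\phi_0$ one computes $\Psi\phi_0(X)=\phi(X)\bar{\otimes}\pi(e)=\phi(X)$ by (CV2)(a) and strictness of $\mathcal{D}$. For $\pi_0$ one has $\Psi\pi_0(g)=\phi(\1_g)\bar{\otimes}\pi(g)$, and I would first record two auxiliary isomorphisms obtained by substituting $X=\1$ in (CV3)(b), recalling $T_g(\1_{g^{-1}})\Iso\1_g$ (Remark~\ref{partialcentralidempotents}) and $\phi(\1)\Iso\1_{\mathcal{D}}$:
\[
\phi(\1_g)\Iso\pi(g)\bar{\otimes}\pi(g^{-1}),\qquad \pi(g^{-1})\bar{\otimes}\pi(g)\Iso\phi(\1_{g^{-1}}).
\]
Together with $\pi(g)\bar{\otimes}\pi(g^{-1})\bar{\otimes}\pi(g)\Iso\pi(g)$ — deduced from (CV2)(c) with $h=g^{-1}$ and (CV2)(a), exactly as one shows $\pi_0(g)\boxtimes\pi_0(g^{-1})\boxtimes\pi_0(g)\Iso\pi_0(g)$ — these give $\Psi\pi_0(g)\Iso\pi(g)$, so the second triangle commutes up to natural isomorphism.

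\textbf{Monoidal structure.} The unit constraint is $\Psi(\1_{\C\smp G})=\phi(\1)\bar{\otimes}\pi(e)\Iso\1_{\mathcal{D}}$. For the tensor constraint it suffices, by additivity, to produce a natural isomorphism $\Psi_2$ with components
\[
\phi(X_g)\bar{\otimes}\pi(g)\bar{\otimes}\phi(X_h)\bar{\otimes}\pi(h)\;\iso\;\phi\big(X_g\otimes T_g(X_h\otimes\1_{g^{-1}})\big)\bar{\otimes}\pi(gh),
\]
the right-hand side being $\Psi$ of \eqref{smash_tensor_product_objects}. I would assemble it from the monoidality of $\phi$, from (CV3)(b) in the form $\pi(g)\bar{\otimes}\phi(X_h)\bar{\otimes}\pi(g^{-1})\Iso\phi(T_g(X_h\otimes\1_{g^{-1}}))$, and from the ``intertwining'' relation
\[
\pi(g)\bar{\otimes}\phi(X)\;\Iso\;\phi\big(T_g(X\otimes\1_{g^{-1}})\big)\bar{\otimes}\pi(g),
\]
which itself follows from the three auxiliary isomorphisms above together with the partial-action identities (in particular $T_{g^{-1}}T_g\Iso\Id$ on the relevant ideal) and the idempotency of the $\1_g$; naturality in $X_g,X_h$ is inherited from the pieces.

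\textbf{Coherence and the main obstacle.} It remains to check the unit and associativity axioms for $(\Psi,\Psi_2)$. The unit axioms reduce to the unit coherences of $\phi$ and the normalisation (CV2)(a). The associativity (hexagon) axiom is the crux: it demands that $\Psi_2$ carry the six-step associator of $\C\smp G$ (displayed in the proof that $\C\smp G$ is monoidal) to the associator of $\mathcal{D}$, and verifying this is a lengthy diagram chase that threads that associator through the monoidal structure of $\phi$ and through repeated applications of (CV3)(b) reorganised by the associativity relations (CV2). This is entirely parallel to — and of the same difficulty as — the pentagon verification for $\C\smp G$ itself, which the paper omits; I would organise the chase so that each elementary move is a single instance of a covariant-pair axiom or of the coherence of $\phi$. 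Finally, $\Psi$ is unique up to monoidal natural isomorphism, since its values on the generators $X_g\delta_g$ are forced by the factorisation $X_g\delta_g\Iso\phi_0(X_g)\boxtimes\pi_0(g)$.
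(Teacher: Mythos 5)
Your proposal is correct and takes essentially the same route as the paper's own proof: the same forced formula $\Psi\bigl(\bigoplus_{g}X_g\delta_g\bigr)=\bigoplus_{g}\phi(X_g)\,\bar{\otimes}\,\pi(g)$, the same verification that the triangle commutes up to isomorphism (via $\phi(\1_g)\cong\pi(g)\,\bar{\otimes}\,\pi(g^{-1})$ and $\pi(g)\,\bar{\otimes}\,\pi(g^{-1})\,\bar{\otimes}\,\pi(g)\cong\pi(g)$), and the same chain of isomorphisms (monoidality of $\phi$, (CV3)(b), (CV2)(c), (CV3)(a)) for the tensor constraint, which your ``intertwining relation'' merely repackages. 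The coherence (hexagon/unit) axioms you flag as the remaining crux are likewise left unverified in the paper's proof, so your write-up is, if anything, more explicit about that omission.
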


\begin{proof}
	Define
	\begin{align*}
		\Psi \colon \C \smp G & \longto \mathcal{D}\\
		\bigoplus_{g \in G} X_g \delta_g & \longmapsto \bigoplus_{g \in G} \phi(X_g) \mathop{\bar\otimes} \pi(g)\\
		\bigoplus_{g \in G} (f_g, g) & \longmapsto \bigoplus_{g \in G} ( \phi(f_g) \mathop{\bar\otimes} \pi(g) ).
	\end{align*}
	
	Given \(X \in  (\C)^{(0)}\), we have that
	\[
	\Psi(\phi_0 (X)) = 
	\Psi(X \delta_e) = 
	\phi(X) \mathop{\bar\otimes} \pi(e) = 
	\phi(X) \mathop{\bar\otimes} \1_{\mathcal{D}} \Iso
	\phi(X).
	\]
	
	Now, consider a morphism \( f:X\rightarrow Y\) in $\mathcal{C}$, then
	\[
	\Psi(\phi_0 (f)) = 
	\Psi (f ,e) = 
	\phi(f) \mathop{\bar\otimes} \pi(e) = 
	\phi(f) \mathop{\bar\otimes} \1_{\mathcal{D}} \Iso
	\phi(f).
	\]
	
	Given \(g \in G\),
	\begin{align*}
		\Psi(\pi_0(g))
		& = \Psi(\1_g \delta_g) \\
		& = \phi(\1_g) \mathop{\bar\otimes} \pi(g) \\
		& \Iso \phi( T_g( \1_{\C} \otimes \1_{g^{-1}} ) ) \mathop{\bar\otimes} \pi(g) \\
		& \Iso \pi(g) \mathop{\bar\otimes} \phi( \1_{\C} ) \mathop{\bar\otimes} \pi(g^{-1}) \mathop{\bar\otimes} \pi(g) \\
		& \Iso \pi(g) \mathop{\bar\otimes} \pi(g^{-1}) \mathop{\bar\otimes} \pi(g) \mathop{\bar\otimes} \phi( \1_{\C} ) \\
		& \Iso \pi(g) \mathop{\bar\otimes} \1_{\mathcal{D}} \\
		& \Iso \pi(g) 
	\end{align*}
	
	In order to verify that the functor $\Psi$ is monoidal, first observe that
	\[
	\Psi (\1_{\mathcal{C} \underline{\rtimes} G}) =\Psi (\1_{\mathcal{C}}\delta_e) =\phi (\1_{\mathcal{C}}) \bar{\otimes} \pi (e) =\1_{\mathcal{D}} \bar{\otimes} \1_{\mathcal{D}} \cong \1_{\mathcal{D}}.
	\]
	Now, take two objects $X_g \delta_g$ and $Y_h \delta_h$ in $\mathcal{C}\underline{\rtimes} G$, then
	\begin{align*}
		\Psi ((X_g \delta_g) \boxtimes (Y_h \delta_h)) & = \Psi ((X_g \otimes T_g(Y_h \otimes \1_{g^{-1}}))\delta_{gh}) \\
		& = \phi (X_g \otimes T_g(Y_h \otimes \1_{g^{-1}})) \bar{\otimes} \pi (gh) \\
		& \cong \phi (X_g ) \bar{\otimes} \phi( T_g(Y_h \otimes \1_{g^{-1}})) \bar{\otimes} \pi (gh) \\
		& \cong \phi (X_g ) \bar{\otimes} \pi (g) \phi( Y_h ) \bar{\otimes} \pi (g^{-1}) \bar{\otimes} \pi (gh) \\
		& \cong \phi (X_g ) \bar{\otimes} \pi (g) \phi( Y_h ) \bar{\otimes} \pi (g^{-1}) \bar{\otimes} \pi (g) \bar{\otimes} \pi(h) \\
		& \cong \phi (X_g ) \bar{\otimes} \pi (g)  \bar{\otimes} \pi (g^{-1}) \bar{\otimes} \pi (g) \bar{\otimes} \phi( Y_h )  \bar{\otimes} \pi(h) \\
		& \cong \phi (X_g ) \bar{\otimes} \pi (g)  \bar{\otimes} \phi( Y_h )  \bar{\otimes} \pi(h) \\
		& =  \Psi (X_g \delta_g) \bar{\otimes} \Psi (Y_h \delta_h) .\qedhere
	\end{align*}
\end{proof}

In particular, in the case of the monoidal category $\text{End} (\mathcal{M})$, for a given category $\mathcal{M}$, a monoidal functor $\phi : \mathcal{C} \rightarrow \text{End}(\mathcal{M})$ means that $\mathcal{M}$ is a module category over the monoidal category $\mathcal{C}$. Then, we have the following result.

\begin{Corollary}
	Let $\mathcal{C}$ be a $\Bbbk$-linear, abelian and monoidal category, $T = \left( \{ T_g \}_{g \in G}, \{ \gamma_{g,h} \}_{g, h \in G}, u \right)$ a partial action of a group $G$ on $\mathcal{C}$ generated by central idempotent objects $\{\1_g \}_{g\in G}$. Let $\mathcal{M}$ be a $\mathcal{C}$-module category with the module structure defined by a monoidal functor $\phi :\mathcal{C} \rightarrow \text{End}(\mathcal{M})$ and a functor $\pi :\underline{G} \rightarrow \text{End}(\mathcal{M})$, such that $\phi$ and $\pi$ satisfy the conditions of the Theorem \ref{parcovariante}. Then $\mathcal{M}$ is a $\mathcal{C} \smp G$-module category.
\end{Corollary}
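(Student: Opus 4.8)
The plan is to read the statement as a direct corollary of \cref{parcovariante}, using the standard dictionary between module categories and monoidal functors into an endofunctor category. Recall, as noted just before the statement, that equipping a category $\mathcal{M}$ with the structure of a (left) module category over a monoidal category $\mathcal{D}$ is the same data as a monoidal functor $\mathcal{D} \to \text{End}(\mathcal{M})$, where $\text{End}(\mathcal{M})$ is the strict monoidal category of endofunctors of $\mathcal{M}$ under composition. Under this correspondence the action bifunctor together with its associativity and unit constraints is encoded by the underlying functor of the monoidal functor together with its structure isomorphisms $J$ and $J^0$, and the module pentagon and triangle axioms become, respectively, the hexagon \eqref{hex-J-a} and the unit squares \eqref{square-J-l} and \eqref{square-J-r}.

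First I would specialize \cref{parcovariante} to $\mathcal{D} = \text{End}(\mathcal{M})$. The hypotheses of the corollary give exactly a monoidal functor $\phi \colon \mathcal{C} \to \text{End}(\mathcal{M})$ (the $\mathcal{C}$-module structure on $\mathcal{M}$) together with a functor $\pi \colon \underline{G} \to \text{End}(\mathcal{M})$, and the assumption that $\phi$ and $\pi$ satisfy the conditions of \cref{parcovariante} says precisely that $(\phi, \pi)$ is a covariant pair related with $\text{End}(\mathcal{M})$, i.e.\ it verifies (CV1)--(CV3).

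Applying \cref{parcovariante} to this covariant pair then yields a monoidal functor
\[
\Psi \colon \C \smp G \longrightarrow \text{End}(\mathcal{M}),
\]
fitting into the commuting triangle with $\phi_0$ and $\pi_0$. Feeding $\Psi$ back through the dictionary of the first paragraph, a monoidal functor $\C \smp G \to \text{End}(\mathcal{M})$ is nothing but a $\C \smp G$-module category structure on $\mathcal{M}$; explicitly the action is $(X_g \delta_g) \cdot M = \Psi(X_g \delta_g)(M) = \bigl(\phi(X_g) \mathop{\bar\otimes} \pi(g)\bigr)(M)$, with coherence data inherited from the monoidal structure of $\Psi$. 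This proves that $\mathcal{M}$ is a $\C \smp G$-module category.

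I do not expect any essential obstacle here, since all the work is contained in \cref{parcovariante}. The only point deserving care is the equivalence invoked in the first paragraph: one must check that the coherence isomorphisms of the induced $\C \smp G$-action satisfy the module pentagon and triangle exactly because $\Psi$ is a monoidal functor. This is the standard bijection between $\mathcal{D}$-module structures and monoidal functors $\mathcal{D} \to \text{End}(\mathcal{M})$, so no additional diagram chase beyond \cref{parcovariante} is required.
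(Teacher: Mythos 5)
Your proposal is correct and follows exactly the route the paper intends: the corollary is an immediate specialization of Theorem \ref{parcovariante} to $\mathcal{D} = \text{End}(\mathcal{M})$, combined with the standard identification of monoidal functors $\C \smp G \to \text{End}(\mathcal{M})$ with $\C \smp G$-module structures on $\mathcal{M}$, which is precisely the observation the paper makes in the sentence preceding the statement. Your explicit description of the induced action, $(X_g\delta_g)\cdot M = \bigl(\phi(X_g)\mathop{\bar\otimes}\pi(g)\bigr)(M)$, is consistent with the paper's formula $X\delta_g \odot Y = X \otimes T_g(Y \otimes \1_{g^{-1}})$ in the special case $\mathcal{M} = \C$.
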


In particular, if $\mathcal{C}$ is a $\Bbbk$-linear, abelian, monoidal category and $T = \left( \{ T_g \}_{g \in G}, \{ \gamma_{g,h} \}_{g, h \in G}, u \right)$ is a partial action of a group $G$ on  $\mathcal{C}$ generated by central idempotent objects $\{\1_g \}_{g\in G}$, then, as a consequence of the Theorem \ref{teoremarepparcial}, the functor $\pi : \underline{G} \rightarrow \text{End}(\mathcal{C})$ given by $\pi (g) (X)=T_g (X\otimes \1_{g^{-1}})$ together with the functor $\phi :\mathcal{C} \rightarrow \text{End}(\mathcal{C})$ given by $\phi (X)(Y)=X\otimes Y$, satisfy the conditions of Theorem \ref{parcovariante}. Therefore, $\mathcal{C}$ is a $\mathcal{C} \smp G$-module category with the action given by
\[
X\delta_g\odot Y =X\otimes T_g (Y\otimes \1_{g^{-1}}) .
\]
Conversely, any $\mathcal{C} \smp G$-module category can be made into a $\mathcal{C}$-module category with the action
\[
X\overline{\otimes} M=X\delta_e \odot M.
\]

\section{Conclusions and outlook}
We presented the notion of a partial group action on a monoidal category and constructed, following \cite{Tambara}, two new monoidal categories from a partial action of a group $G$ on a monoidal category $\mathcal{C}$, namely, the category of partial invariants, $\mathcal{C}^{\underline{G}}$ or the associated partial equivariantized category, and the partial smash product category, $\mathcal{C} \smp G$. These are, respectively, generalizations of the equivariantized category $\mathcal{C}^G$ and the semidirect poduct $\mathcal{C}[G]$, introduced in \cite{Tambara}, for the case of a global action $T:\underline{G} \rightarrow \text{End}_{\otimes} (\mathcal{C})$.

In reference \cite{Tambara}, the author discussed the Morita equivalence between the monoidal categories $\mathcal{C}^G$ and $\mathcal{C}[G]$, for $G$ a finite group. The basic idea comes from the fact that for the monoidal cateogry $\underline{Vect}_{\Bbbk}$ of $\Bbbk$-vector spaces, with the trivial group action, that is, each element $g\in G$ acts as the identity functor in $\underline{Vect}_{\Bbbk}$, then the equivariantized category $\underline{Vect}_{\Bbbk}^{G}$ is equivalent to the category of $\Bbbk G$-modules, and the semidirect product $\underline{Vect}_{\Bbbk} [G]$ is equivalent to the category of $G$-graded vector spaces, which is the category of $(\Bbbk G)^*$-modules. Then, in this case, the Morita equivalence becomes a particular case of the classical Morita equivalence between the monoidal categories of $H$-modules and $H^*$-modules, for $H$ being a finite dimensional Hopf algebra. The general case, for any monoidal category $\mathcal{C}$, can be constructed from the case of the category of $\Bbbk$-vector spaces.

For partial group actions, the relationship between $\mathcal{C}^{\underline{G}}$ and $\mathcal{C} \smp G$ becomes more complicated. The main reason is that we cannot use the same comparison with the category of $\Bbbk$-vector spaces, because the trivial action, being global, lead to the same equivariantized category and the semidirect product existing in the classical case. Hence, the subtleties concerning the partial actions are completely eluded. Then, we left the study of Morita equivalence in the partial case for a future investigation.

There are two results coming from the theory of partial group actions whose categorical generalizations are worthy to be studied in future investigations: Consider a unital partial action $\alpha$ of a finite group $G$, over an algebra $A$ and denoting by $\beta$ the globalization of $\alpha$, acting over an algebra $B$. Then, in \cite{DFP}, it is proved that the partial trace $\underline{Tr}:A\rightarrow A^{\underline{G}}$, given by
\[
\underline{Tr}(a) =\frac{1}{|G|} \sum_{g\in G} \alpha_g (a\1_{g^{-1}})
\]
is onto if, and only if, the classical trace of the globalized action, $Tr: B\rightarrow B^G$, given by 
\[
Tr (b)=\frac{1}{|G|}\sum_{g\in G} \beta_g (b)
\]
is onto (see \cite{DFP}, Corollary 2.2). Moreover, if the partial trace is onto, then there is a ring isomorphism between the invariant subalgebras $A^{\underline{G}}$ and $B^G$ (see \cite{DFP}, Proposition 2.3). The main difficulty about the categorical generalizations of these results is the fact that, even in the global case, any object in the category $\mathcal{C}$ is isomorphic only to a direct summand of its trace. Also, under the same conditions, in \cite{DE}, the authors proved that there is a Morita equivalence between the partial smash product $A\smp_\alpha G$ and the global smash product $B\rtimes_\beta G$. This result seems to be more straightforward to be formulated in the categorical case.

\end{document}